\definecolor{mno}{rgb}{0.5,0.1,0.5}
\newcommand{\R}{\mathbb R}
\newcommand{\Pp}{\mathbb P}
\newcommand{\Ee}{\mathbb E}
\newcommand{\I}{\mathbf 1}
\def\<{\langle}
\def\>{\rangle}
\newtheorem{theorem}{Theorem}[section]
\newtheorem{lemma}[theorem]{Lemma}
\newtheorem{proposition}[theorem]{Proposition}
\newtheorem{corollary}[theorem]{Corollary}
\theoremstyle{definition}
\newtheorem{example}[theorem]{Example}
\begin{document}
\allowdisplaybreaks
\title[Two-sided heat kernel estimates for Schr\"{o}dinger operators
]
{\bfseries Two-sided heat kernel estimates for Schr\"{o}dinger operators
with
unbounded potentials
}

\author{Xin Chen\qquad Jian Wang}

\date{}

 \thanks{\emph{X.\ Chen:}
   Department of Mathematics, Shanghai Jiao Tong University, 200240 Shanghai, P.R. China. \texttt{chenxin217@sjtu.edu.cn}}
 
\thanks{\emph{J.\ Wang:}
School  of Mathematics and Statistics \&  Fujian Key Laboratory of Mathematical
Analysis and Applications (FJKLMAA) \&  Center for Applied Mathematics of Fujian Province (FJNU), Fujian Normal University, 350007 Fuzhou, P.R. China. \texttt{jianwang@fjnu.edu.cn}}

\maketitle

\begin{abstract}Consider the Schr\"odinger operator
$
\mathcal L^V=-\Delta+V
$ on $\R^d$,
where
$V:\R^d\to [0,\infty)$ is a nonnegative and locally bounded potential on $\R^d$ so that for all $x\in \R^d$ with $|x|\ge 1$,
$c_1g(|x|)\le V(x)\le c_2g(|x|)$ with some constants $c_1,c_2>0$ and a nondecreasing and strictly positive function $g:[0,\infty)\to [1,+\infty)$ that satisfies
$g(2r)\le c_0 g(r)$ for all $r>0$ and $\lim_{r\to \infty} g(r)=\infty.$ We establish global in time and qualitatively sharp bounds for the heat kernel of the associated Schr\"{o}dinger semigroup
by the probabilistic method.
In particular, we can present global in space and time two-sided bounds of heat kernel even when the  Schr\"{o}dinger semigroup is not intrinsically ultracontractive. Furthermore, two-sided estimates for the corresponding Green's functions are also obtained.

\medskip

\noindent\textbf{Keywords:} Schr\"odinger operator; heat kernel; Feynman-Kac formua; intrinsical ultracontractivity; Green's function

\medskip

\noindent \textbf{MSC 2010:} 60G51; 60G52; 60J25; 60J75.
\end{abstract}
\allowdisplaybreaks

\section{Introduction and main result}
The study of heat kernel bounds for differential operators on domains of $\R^d$ or Riemannian manifolds has a long history.
We refer the
reader
for an account on this to the monographs
of Davies \cite{Da}, Grigor'yan \cite{Gr} and Ouhabaz \cite{Ou}.  A fundamental result in this direction proved in \cite{A} states that the fundamental solution (that is, the heat kernel)  $p(t,x,y)$ of a second
order uniformly parabolic equation in divergence form on $\R^d$ enjoys the following Gaussian estimates
$$p(t,x,y)\asymp t^{-d/2}\exp\left(-\frac{|x-y|^2}{t}\right),\quad t>0, x,y\in \R^d.$$
\emph{Here and in what follows, for two functions $f$ and $g$, $f\asymp g$ means
that there are positive constants $c_1,c_2,c_3,c_4$ such that $c_1f(c_2r)\le g(r)\le c_3 f(c_4r)$.}

Since then many efforts are devoted to characterizing functions $V$ for which the heat kernel
of the Schr\"{o}dinger operator $\mathcal L:=-\Delta+V$ (or the second-order differential elliptic
operator with the Schr\"{o}dinger perturbation $V$) is comparable with the Gaussian estimates. For example, this happens when $V\in L^p(\R^d)$ with $p>d/2$ or the potential $V$ belongs to the so-called Kato class;
see Bogdan, Dziuba\'{n}ski and Szczypkowski \cite{BDS}
for the recent study on this topic.
However, in general such Gaussian estimates can not hold. A typical and important
example is the Schr\"{o}dinger operator with inverse square potential, that is,
$$\mathcal L_\lambda:= -\Delta+\frac{\lambda}{|x|^2},$$ where $-(d-2)^2/4\le \lambda<0$; see
Ishige, Kabeya and Ouhabaz \cite{IKO}, Milman and Semenov \cite{Mis} and Zhang \cite{Z1,Z2} for explicit non-Gaussian estimates for the heat kernel of
$\mathcal L_\lambda$.

Since the Schr\"{o}dinger operator and its properties play an important role in geometry, analysis, probability, quantum mechanics and other subjects,
the following important question naturally arises (e.g., see \cite[p.\ 371]{Z1} and \cite[p.\ 344]{Z2}) ---

\emph{Does there exist a global estimate on the heat kernel of $\mathcal L^V:=-\Delta+V$, which
reveals an explicit dependence on the potential $V$?}

It is obvious to see that the heat kernel
$p(t,x,y)$ of $\mathcal L^V$ on $\R^d$
satisfies
$$\frac{c_d e^{-\|V\|_\infty t} }{t^{d/2}}\exp\left(-\frac{|x-y|^2} {4t}\right)\le  p(t,x,y)\le \frac{c_d e^{\|V\|_\infty t} }{t^{d/2}}\exp\left(-\frac{|x-y|^2} {4t}\right).$$
The bound above is
not qualitatively sharp in time
since the functions $e^{-\|V\|_\infty t}$ and $e^{\|V\|_\infty t}$ appear in both sides simultaneously. In particular, the bound above is not useful when the potential $V$ is unbounded on $\R^d$. One can imagine in this case that the bounds of $p(t,x,y)$ should depend on the potential $V$ in a very
implicit manner, where the Gaussian estimates do not hold.
A number of authors have studied heat kernel estimates corresponding to Schr\"{o}dinger operators
with unbounded potentials, see e.g. \cite{Ku,MPR,MS1,S} and references
 therein.
In particular, when the potential
$V$ blows up near infinity, precise on-diagonal estimates of the heat kernel $p(t,x,y)$
were obtained by Sikora \cite{S}, also see \cite{Ku,MPR,MS1} for more details.
Indeed, there are a few deeper results in this special setting (see \cite{BJP} and the references therein). For example, suppose that $V(x)=|x|^\alpha$ with $\alpha>2$, Davies and
Simon proved in \cite[Theorem 6.3]{DS84} that the Schr\"{o}dinger
semigroup associated with the operator $\mathcal L^V=\Delta+ V$ satisfies the so-called
intrinsically ultracontractive property. In particular, according to \cite[Theorem 6.3]{DS84} and \cite[Theorem 4.2.5 and Corollary 4.5.8]{Da}, we know that for any $\varepsilon>0$,
there exists $T>0$ so that for all $t>T$ and $x,y\in \R^d$,
$$(1-\varepsilon)e^{-\lambda_1t}\varphi_1(x)\varphi_1(y)\le p(t,x,y)\le (1+\varepsilon)e^{-\lambda_1t}\varphi_1(x)\varphi_1(y),
$$
where $\varphi_1(x)$ is a ground state (eigenfunction corresponding to the smallest eigenvalue $\lambda_1$) of
the operator $\mathcal L^V$ (also see \cite[Theorem 7]{S} for the explicit expression  of $\varphi_1$).
Similar
bounds have been also obtained in Metafune, Pallara and Rhandi \cite{MPR}, where estimates of the form $p(t,x,y)\le c(t)\varphi(x)\varphi(y)$
were derived with more accurate estimates of $c(t)$ as $t$ tends to $0$.

However, to the best of our knowledge, global qualitatively sharp two-sided bounds (especially the off-diagonal estimates) are still unknown
for general potentials $V:\R^d\to [0,\infty)$ which tend to  
$\infty$ as $|x|\to \infty$.
The main purpose of this paper is to address this problem affirmatively.

\ \

Throughout the paper, let $\mathcal L^V$ be the Schr\"odinger operator on $\R^d$
\begin{equation}\label{e1-1}
\mathcal L^V=-\frac{1}{2}\Delta+V,
\end{equation}
where $\Delta:=\sum_{i=1}^d \frac{\partial^2}{\partial x_i^2}$ denotes the Laplacian operator on $\R^d$, and
$V:\R^d\to [0,\infty)$ is a nonnegative and locally bounded potential on $\R^d$.
It is well known that (see e.g. \cite[Chapter 3]{CZ}) there exists a
Schr\"odinger semigroup $\{T_t^V\}_{t\ge 0}$ associated with the operator $\mathcal L^V$ defined by \eqref{e1-1}, and, by the Feynman-Kac formua,
$\{T_t^V\}_{t\ge 0}$ enjoys the following probabilistic representation
\begin{equation}\label{e1-4}
T_t^V f(x)=\Ee_x\left[f(B_t)\exp\left(-\int_0^t V(B_s)\,ds\right)\right],\quad f\in C_b(\R^d),
\end{equation}
where $\{B_t\}_{t\ge 0}$ is the standard $\R^d$-valued Brownian motion,
and
$\Pp_x$ and $\Ee_x$ denote
the probability and the expectation corresponding to $\{B_t\}_{t\ge 0}$ with initial value $x\in \R^d$ respectively.
Furthermore, according to \cite[Chapter 3]{CZ} (see also \cite[Theorem B.7.1]{Sim}), there exists a jointly continuous density $p:\R_+\times \R^d\times \R^d \to \R_+$ with respect to the Lebesgue measure such that
\begin{equation}\label{e1-5}
T_t^V f(x)=\int_{\R^d}p(t,x,y)f(y)\,dy,\quad  f\in C_b(\R^d),\ t>0,\ x\in \R^d.
\end{equation}  
Moreover, since $\mathcal L^V$ is symmetric on $L^2(\R^d;dx)$, $p(t,x,y)=p(t,y,x)$ for all $t>0$ and $x,y\in \R^d$.  
In the following, let $q(t,x,y)$ be the transition density function (i.e., heat kernel) of the standard $d$-dimensional Brownian motion $\{B_t\}_{t\ge0}$; that is,
\begin{equation}\label{e1-6}
q(t,x,y)=(2\pi t)^{-d/2}\exp\left(-\frac{|x-y|^2}{2t}\right),\quad t>0,\ x,y\in \R^d.
\end{equation}
Since $V\ge0$, it is obvious that $$p(t,x,y)\le q(t,x,y),\quad t>0, x,y\in \R^d.$$

\ \

To state our main result, we need some assumption
and notations.
Let $\mathscr{G}$ be the class of nondecreasing and strictly positive functions $g:[0,\infty)\to [1,+\infty)$ such that
$$\lim_{r\to \infty}g(r)=\infty$$
and
there exists a constant $c_0>0$ so that for all $r\ge 0,$
\begin{equation}\label{e1-2a}
g(2r)\le c_0 g(r).
\end{equation}
In this paper, we always make the following assumption on $V$:

\noindent  {\bf Assumption (H)}: {\it There exist positive constants $C_1,C_2$ and $g\in \mathscr{G}$ so that
\begin{equation}\label{e1-3}
\begin{split}
  C_1g(|x|)\le V(x)\le C_2g(|x|),&\quad  x\in \R^d\ {\rm with}\ |x|> 1.
\end{split}
\end{equation}}

For any $s\ge0$, set
$$t_0(s):=\frac{1+s}{\sqrt{g(s)}},\quad s>0.$$ We say $t_0:[0,\infty)\to \R_+$ is almost increasing (resp.\ almost decreasing), if
there exists an increasing (resp.\ a decreasing) function $h:[0,+\infty) \to [1,+\infty)$ such that for some
positive constants $C_*,C^*$,
$$
C_*h(s)\le t_0(s)\le C^*h(s), \quad s\ge0.$$ For any $a,b>0$, $a\wedge b=\min\{a,b\}$ and $a\vee b=\max\{a,b\}$.

\begin{theorem}\label{thm1} Under {\bf Assumption (H)},  it holds for any $C_0>0$ that
\begin{itemize}
\item[(i)] for all $x,y\in \R^d$ and $0<t\le C_0t_0(|x|\wedge |y|)$,
\begin{equation}\label{e:thm1-1}p(t,x,y)\asymp t^{-d/2}e^{-\frac{|x-y|^2}{t}}\,
\exp\left(-\left(t \min\{g(|x|),g(|y|)\}+|x-y|\sqrt{\max\{g(|x|), g(|y|)\}}\right)\right).\end{equation}

\item[(ii)] for all $x,y\in \R^d$ and $t\ge C_0 t_0(|x|\wedge |y|)$,
\begin{equation}\label{e:thm1-2}
p(t,x,y)\asymp e^{-t}\psi(t,x)\psi(t,y),
\end{equation}
where
\begin{equation*}
\psi(t,x):=\exp\left(-\left((1+|x|)\sqrt{g(|x|)}+\frac{(1+|x|)^2}{t}\right)\right),\quad t>0,\ x\in \R^d.
\end{equation*}
\end{itemize}
\end{theorem}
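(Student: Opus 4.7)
The plan is to work from the Feynman--Kac representation \eqref{e1-4} throughout, writing
\[
p(t,x,y)=q(t,x,y)\cdot\Ee_{x,y}^{t}\Bigl[e^{-\int_0^t V(B_s)\,ds}\Bigr],
\]
where $\Ee_{x,y}^{t}$ denotes expectation under the Brownian bridge from $x$ to $y$ of length $t$. Both parts of Theorem~\ref{thm1} reduce to two-sided control of this bridge functional, and the split between (i) and (ii) is dictated by $t_0(|x|\wedge|y|)$: at $t\asymp t_0(|x|\wedge|y|)$ the Gaussian cost $(1+|x|)^2/t$ of a displacement of size $|x|$ balances the Agmon-type cost $(1+|x|)\sqrt{g(|x|)}$, so the free-Brownian and potential-dominated regimes meet there.

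For part (i), the Brownian bridge from $x$ to $y$ in time $t\le C_0 t_0(|x|\wedge|y|)$ concentrates in a $\sqrt{t}$-tube around the segment $[x,y]$. On this tube the doubling condition \eqref{e1-2a} combined with Assumption~\textbf{(H)} gives $V\asymp g(|\cdot|)$ with $g$ trapped between $g(|x|\wedge|y|)$ and $g(|x|\vee|y|)$. For the lower bound I would restrict $\Ee_{x,y}^{t}$ to a thin tube around $[x,y]$ traversed at constant speed, on which $\int_0^t V(B_s)\,ds$ is controlled directly by the line integral of $g(|\cdot|)$ along $[x,y]$: this yields the summand $t\min\{g(|x|),g(|y|)\}$ (where the radius stays near $|x|\wedge|y|$) together with $|x-y|\sqrt{\max\{g(|x|),g(|y|)\}}$ (from the radial annuli where $g$ grows). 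For the matching upper bound I would partition $[0,t]$ into sub-intervals of length $\asymp 1/\sqrt{g}$ adapted to the current bridge radius, apply the Markov property with Khasminski-type control of the Feynman--Kac functional on each block, and recombine via Jensen's inequality.

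For part (ii), in the long-time regime $t\ge C_0 t_0(|x|\wedge|y|)$, I would apply Chapman--Kolmogorov with asymmetric time splits,
\[
p(t,x,y)=\int\!\!\int p(s_x,x,z_1)\,p(t-s_x-s_y,z_1,z_2)\,p(s_y,z_2,y)\,dz_1\,dz_2,
\]
choosing $s_x\asymp t_0(|x|)$, $s_y\asymp t_0(|y|)$ and integrating $z_1,z_2$ over a fixed neighbourhood of the origin. The endpoint factors are then controlled by the short-time estimate of part (i); a direct calculation shows each is precisely of order $\psi(t,\cdot)$, where the Agmon summand $(1+|x|)\sqrt{g(|x|)}$ comes from the potential term in (i) evaluated at $s_x$, while the Gaussian summand $(1+|x|)^2/t$ is preserved when passing from the short-time factor $|x-z_1|^2/s_x$ to the global time $t$. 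The middle piece $p(t-s_x-s_y,z_1,z_2)$ with $z_1,z_2$ in a bounded set decays at a uniform exponential rate because Assumption~\textbf{(H)} forces $g(|x|)\to\infty$, and hence the ground-state eigenvalue is strictly positive; this yields the factor $e^{-t}$ after absorbing constants in $\asymp$. The matching lower bound is obtained by realising the same three-stage passage as a concrete tube event and applying the Markov property.

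The step I expect to be the main obstacle is establishing the semiclassical Agmon-type factor $e^{-|x-y|\sqrt{\max\{g(|x|),g(|y|)\}}}$ of \eqref{e:thm1-1} in both directions. Probabilistically, this requires a careful trade-off between the Gaussian cost $|x-y|^2/t$ of fast transit and the potential cost $\int_0^t V(B_s)\,ds$: along an annulus where $g\asymp g_0$ the optimal transit speed is $\sqrt{g_0}$, and one must therefore partition $[x,y]$ into radial blocks adapted to the doubling constant of $g$ and iterate the Markov property block by block. A secondary subtlety, essential to the global consistency of the theorem, is checking that \eqref{e:thm1-1} and \eqref{e:thm1-2} match in order of magnitude at the crossover $t\asymp t_0(|x|\wedge|y|)$; this requires inspecting the dominant exponent in each bound at the threshold, with attention to how the doubling constant $c_0$ propagates through the estimates.
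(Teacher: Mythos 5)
Your overall strategy is genuinely different from the paper's: you work with the Brownian bridge representation and tube concentration, whereas the paper hinges on the first-exit-time decomposition \eqref{l2-4-2} combined with the exact law of $(\tau_{B(x,r)},B_{\tau_{B(x,r)}})$ from \eqref{l2-5-1} and the normal-derivative bounds \eqref{l2-1-1}. Both routes can in principle produce the Agmon factor $\exp(-c|x-y|\sqrt{\max\{g(|x|),g(|y|)\}})$, and you have correctly identified that factor, as well as the crossover at $t\asymp t_0$, as the crux. However, as written your proposal has two concrete gaps.

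First, for part~(i) your claim that along the $\sqrt t$-tube around the segment $[x,y]$ one has ``$V\asymp g(|\cdot|)$ with $g$ trapped between $g(|x|\wedge|y|)$ and $g(|x|\vee|y|)$'' is false in general: if $x$ and $y$ are roughly antipodal, the segment $[x,y]$ passes arbitrarily close to the origin, where $g(|z|)$ can be as small as $g(0)=1\ll g(|x|\wedge|y|)$ (and {\bf (H)} gives no two-sided control of $V$ on $|z|\le 1$ at all). Thus the lower bound route through a straight-line tube is fine (smaller potential only helps), but the upper bound cannot be obtained by pretending the potential stays at least $c\,g(|x|\wedge|y|)$ along the whole path. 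The paper sidesteps this by localising only near the \emph{far} endpoint $y$: in Lemma~\ref{l2-4} the ball $U=B(y,|x-y|/3)$ is used, and only $\inf_U V\gtrsim g(|y|)$ is invoked; the Agmon factor then arises from minimising $s\mapsto s\,g(|y|)+|x-y|^2/s$ via the exact exit-time density, with no assertion about the potential in the middle of the segment. Your Khasminskii-block scheme would have to implement something analogous; as stated it relies on a pointwise comparison that is not available.

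Second, for part~(ii) the asymmetric Chapman--Kolmogorov split with $s_x\asymp t_0(|x|)$ and $s_y\asymp t_0(|y|)$ requires $s_x+s_y\le t$, yet the hypothesis only gives $t\ge C_0\,t_0(|x|\wedge|y|)$. When $t_0(\cdot)$ is (almost) increasing and $|y|\gg|x|$, one can have $t<t_0(|y|)$, so the proposed $s_y$ does not fit. This is precisely why the paper splits Section~\ref{section4} into the almost-decreasing case (Lemmas~\ref{l4-1}--\ref{l4-2}, where $t\gtrsim t_0(|y|)$ is automatic) and the almost-increasing case (Lemmas~\ref{l4-3}--\ref{l4-4}), the latter being further subdivided into $t\le C_0't_0(|y|)$, where the relevant endpoint bound comes from the short-time estimate of Lemma~\ref{l2-4} rather than from $T^V_{t/2}1(y)$, and $t>C_0't_0(|y|)$, where Corollary~\ref{E:cor} is used; these two sub-regimes produce, respectively, the $\exp(-c(1+|y|)^2/t)$ and the $\exp(-c(1+|y|)\sqrt{g(|y|)})$ component of $\psi(t,y)$. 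Your sketch collapses both into a single split and does not explain how the $\min$ in $\psi$ arises. You would need to build in this case distinction (or an equivalent interpolation) for part~(ii) to be correct.
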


We make some comments on Theorem \ref{thm1} and its proof.

\begin{itemize}

\item [(i)] Different from the approaches adopted in \cite{Ku,MPR,MS1,S}, we mainly use the probabilistic method to prove Theorem \ref{thm1}.
Two key tools used in our proof are the probabilistic representation \eqref{l2-4-2} for the heat kernel $p(t,x,y)$ and an explicit distribution of
$(\tau_{B(x,r)},B_{\tau_{B(x,r)}})$ with $\tau_{B(x,r)}$ being the first exit time of the Brownian motion $\{B_t\}_{t\ge 0}$ from the
ball
$B(x,r)$, which gives us a new way
to obtain two-sided estimates of $p(t,x,y)$ by carefully studying
the interaction between the potential $V$ and the behavior of Brownian motion $\{B_t\}_{t\ge 0}$ (i.e., the behavior of
$\{B_t\}_{t\ge 0}$ to visit the positions where $V$ takes different values), see the proofs of
Lemmas \ref{l2-4}, \ref{l2-2}, \ref{l3-2} and \ref{l4-2} below for more details.

\item[(ii)] In this paper we only consider
the heat kernel for the Schr\"odinger operator $\mathcal L^V=-\frac{1}{2}\Delta+V$, but the approach works for more general framework. Indeed, according to our proof, with explicit estimates for
the derivative of Dirichlet heat kernel (see \eqref{l2-5-1} and \eqref{l2-1-1}) at hand, we can
consider more general cases where the Laplacian operator $\Delta$ is replaced by second order differential operators with divergence
form in $\R^d$, or the Laplacian-Beltrami operator on a Riemannian manifold. Furthermore, by applying the probabilistic
representation \eqref{l2-4-2} of $p(t,x,y)$, we can also obtain two-sided heat kernel estimates for $\mathcal L^V=-\Delta+V$ with $V$ belonging to a class of potential functions
decaying to zero at infinity in the separate paper \cite{CW1}.

\item [(iii)] For simplicity of the notation, in this paper we make the {\bf Assumption (H)} on the
potential function $V$. After carefully tracking
the proof we know immediately that the upper bounds in \eqref{e:thm1-1} and \eqref{e:thm1-2} still hold when only the lower bound of $V$ in {\bf Assumption (H)} is posed, while
the corresponding lower bounds of $p(t,x,y)$ only depend the upper bound of $V$. Moreover, the condition \eqref{e1-2a} is still not essential for our result. For example,
consider the potential $V(x)$ so that
$C_1g(C_2 |x|)\le V(x)\le C_3 g(C_4|x|)$ for all $|x|\ge 1$ with $g(r)=e^r$.
Then, in the proof of Theorem \ref{thm1}, the property that $c_1g(|x|)\le \sup_{z\in B(x,|x|/3)}V(z)\le c_2g(|x|)$ will be replaced by
$c_3g(c_4|x|)\le \sup_{z\in B(x,|x|/3)}V(z)\le c_5g(c_6|x|)$. Hence, by our arguments we can see that \eqref{e:thm1-1} and \eqref{e:thm1-2} are still true.
\end{itemize}

As mentioned before, compared with the known results in \cite{Ku,MPR,MS1,S}, Theorem \ref{thm1} shows qualitatively sharp
two-sided estimates of $p(t,x,y)$  globally in time and space. To the best our knowledge, Theorem \ref{thm1} covers on-diagonal estimates in the literature and, in particular, presents (two-sided) off-diagonal estimates in
the first time. Below, we consider a concrete example that $V(x)=|x|^\alpha$ for $\alpha>0$. Note that, when $\alpha\in (0,2]$, the
associated Schr\"{o}dinger
semigroup $\{T_t^V\}_{t\ge 0}$ is not intrinsically ultracontractive; however, Example \ref{ex1-1} below even presents two-sided estimates of $p(t,x,y)$ for full time, which also seem to be new.

\begin{example}\label{ex1-1}\it
Assume that $V(x)=|x|^\alpha$ for $\alpha>0$.
Then for every $x,y\in \R^d$ with $|x|\le |y|$ and $t>0$,
\begin{align*}
p(t,x,y)\asymp
\begin{cases}
t^{-d/2}e^{-\frac{|x-y|^2}{t}}e^{-t(1+|x|)^\alpha-|x-y|(1+|y|)^{{\alpha}/{2}}},
&\quad t\le (1+|x|)^{1-{\alpha}/{2}},\\
e^{-t}e^{ -(1+|y|)^{1+{\alpha}/{2}} -\frac{(1+|y|)^2}{t}},&\quad t>
(1+|x|)^{1-{\alpha}/{2}}.
\end{cases}
\end{align*}
In particular,
\begin{itemize}

\item [(i)]for every $x,y\in \R^d$  
with $|x|\le |y|$ satisfying
$|x-y|\le |y|/2$
or $|y|\le 2$, it holds that
\begin{equation}\label{ex1-1-2}
\begin{split}
p(t,x,y)\asymp
\begin{cases}
t^{-d/2}e^{-\frac{|x-y|^2}{t}}e^{-t(1+|y|)^{\alpha}},
\ & t\le \max\left\{(1+|x|)^{1-{\alpha}/{2}}, (1+|y|)^{1-{\alpha}/{2}}\right\},\\
e^{-t}e^{-(1+|y|)^{1+{\alpha}/{2}}},\ &t> \max\left\{(1+|x|)^{1-{\alpha}/{2}}, (1+|y|)^{1-{\alpha}/{2}}\right\}.
\end{cases}
\end{split}
\end{equation}

\item [(ii)] for every $x,y\in \R^d$ with
$|x|\le |y|$ satisfying
$|x-y|>|y|/2$
and $|y|>2$, it holds that
\begin{equation}\label{ex1-1-3}
\begin{split}
p(t,x,y)\asymp
\begin{cases}
t^{-d/2}e^{-\frac{|x-y|^2}{t}}e^{-(1+|y|)^{1+{\alpha}/{2}}},
\ & t\le\max\left\{(1+|x|)^{1-{\alpha}/{2}}, (1+|y|)^{1-{\alpha}/{2}}\right\},\\
e^{-t}e^{-(1+|y|)^{1+{\alpha}/{2}}},\ &t>\max\left\{(1+|x|)^{1-{\alpha}/{2}}, (1+|y|)^{1-{\alpha}/{2}}\right\}.
\end{cases}
\end{split}
\end{equation}
\end{itemize}
\end{example}

In Example \ref{ex1-1} above, we write the estimates of $p(t,x,y)$ in
a more explicit way which reveals various effects of the potential $V(x)=|x|^\alpha$ at
different times $t>0$ and different positions $x,y\in \R^d$.
For large time scale
$t>\max\{(1+|x|)^{1-{\alpha}/{2}}, (1+|y|)^{1-{\alpha}/{2}}\}$ (which depends on the positions $x,y\in \R^d$
and the potential $V$), the Gaussian type bound $t^{-d/2}e^{-\frac{|x-y|^2}{t}}$ has been
totally covered by the estimate for the ground state $\phi(x)\asymp e^{-(1+|x|)^{1+{\alpha}/{2}}}$ (see \cite{Da,DS84}). For small time scale $t\le \max\{(1+|x|)^{1-{\alpha}/{2}}, (1+|y|)^{1-{\alpha}/{2}}\}$, the dominant bound
is the Gaussian type bound multiplying some extra term. In particular, when
$x$ is close to $y$
(i.e., $|x-y|\le |y|/2$ or $|y|\le 2$),
such term is $e^{-t(1+\max\{|x|,|y|\})^\alpha}$, while this term becomes $e^{-(1+\max\{|x|,|y|\})^{1+{\alpha}/{2}}}$
when the distance between $x$ and $y$ is relatively large.
In particular, $p(t,x,y)$ enjoys two-sided Gaussian type
bounds
as long as $t\le \min\{(1+|x|)^{-\alpha}, (1+|y|)^{-\alpha}\}$.

\ \

As an application of Theorem \ref{thm1}, we have the following two-sided estimates for Green's function  of
the Schr\"odinger operator $\mathcal L^V=-\frac{1}{2}\Delta+V$.
\begin{proposition}\label{t1-3}
Suppose {\bf Assumption (H)} holds. Let $G(x,y):=\int_0^\infty p(t,x,y)\,dt$ be Green's function
associated with the Schr\"odinger semigoup $\{T_t^V\}_{t\ge 0}$. Then, for all $x,y\in \R^d$,
\begin{equation}\label{t1-3-1}
G(x,y)\asymp |x-y|^{-(d-2)}\cdot \Gamma(x,y),
\end{equation}
where
\begin{equation}\label{t1-3-1a}
\begin{split}
\Gamma(x,y)& 
= \exp\left(-|x-y|\sqrt{\max\{g(|x|),g(|y|)\}}\right)
\cdot
\begin{cases}
1,\ &d\ge3,\\
\max\left\{\log\left(\frac{1}{|x-y|\sqrt{\max\{g(|x|),g(|y|)\}}}\right),1\right\},\ &d=2,\\
\frac{1}{|x-y|\sqrt{\max\{g(|x|),g(|y|)\}}},\ &d=1.
\end{cases}
\end{split}
\end{equation}
\end{proposition}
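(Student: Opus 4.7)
The plan is to insert the two-sided heat kernel bounds of Theorem~\ref{thm1} into the definition $G(x,y) = \int_0^\infty p(t,x,y)\,dt$ and split the integral at the threshold $T := C_0 t_0(|x|\wedge|y|)$, writing $G(x,y) = I_1(x,y) + I_2(x,y)$ with $I_1 := \int_0^T p(t,x,y)\,dt$ and $I_2 := \int_T^\infty p(t,x,y)\,dt$. Set $r := |x-y|$, $m := \min\{g(|x|),g(|y|)\}$ and $M := \max\{g(|x|),g(|y|)\}$, so that $m = g(|x|\wedge|y|)$ and $M = g(|x|\vee|y|)$ by monotonicity of $g$. Throughout the argument, I would exploit the fact that the convention $f\asymp h$ used in the paper allows different multiplicative constants inside exponentials, so that any factor of the form $(r\sqrt{M})^k$ may be absorbed into $e^{-r\sqrt{M}}$ whenever $r\sqrt{M}$ is large (and is trivial when it is bounded); this freedom is crucial because several intermediate bounds produce spurious polynomial or sub-exponential factors that disappear under $\asymp$.

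For $I_1$, Theorem~\ref{thm1}(i) reduces the estimate, up to the outer factor $e^{-r\sqrt{M}}$, to the model integral
\begin{equation*}
J(r,m;T) \;:=\; \int_0^T t^{-d/2}e^{-r^2/t - tm}\,dt.
\end{equation*}
The substitution $u = r^2/t$ transforms this into $r^{2-d}\int_{r^2/T}^\infty u^{d/2-2}e^{-u - mr^2/u}\,du$. The untruncated counterpart admits the Macdonald representation $\int_0^\infty u^{\nu-1}e^{-u-a/u}\,du = 2a^{\nu/2}K_\nu(2\sqrt{a})$ with $\nu = d/2-1$ and $a = mr^2$; the classical asymptotics of $K_\nu$ (finite limit at $0$ for $\nu>0$, logarithmic divergence at $\nu=0$, algebraic behaviour $a^{-1/4}$ at $\nu=-1/2$, and exponential decay $e^{-2\sqrt{a}}$ with a polynomial prefactor at $\infty$ in all dimensions) produce exactly the dimension-dependent function $\Gamma(x,y)$ displayed in \eqref{t1-3-1a}. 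It then remains to verify that the lower cutoff $r^2/T$ does not alter the resulting asymptotics, which I would do by checking that the saddle $u^* = \sqrt{mr^2}$ of the inner integrand lies above $r^2/T$ whenever $r\lesssim 1+|x|\wedge|y|$, and by a direct Laplace estimate at the endpoint $u = r^2/T$ in the complementary regime.

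For $I_2$, Theorem~\ref{thm1}(ii) yields
\begin{equation*}
I_2(x,y)\;\asymp\; e^{-(1+|x|)\sqrt{g(|x|)} - (1+|y|)\sqrt{g(|y|)}}\int_T^\infty e^{-t - A/t}\,dt,
\end{equation*}
with $A := (1+|x|)^2+(1+|y|)^2$. The remaining integral is again of Macdonald type, with saddle at $t=\sqrt{A}$; a standard Laplace expansion gives $\int_T^\infty e^{-t-A/t}\,dt\asymp A^{1/4}e^{-2\sqrt{A}}$ (noting that $T\le 1+|x|\wedge|y|\le\sqrt{A}$ always, so the saddle lies in the integration range). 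Using $r\le (1+|x|)+(1+|y|)$ and $\sqrt{M}\ge 1$, the prefactor dominates $e^{-c\,r\sqrt{M}}$ for some $c>0$, and after absorbing the polynomial corrections into the exponential, $I_2$ is controlled by the right-hand side of \eqref{t1-3-1}. Conversely, in the subregime where $I_1$ is exponentially smaller than the target (roughly $r\gg 1+|x|\wedge|y|$), $I_2$ itself reproduces the claimed bound, so $I_1+I_2$ has the right order in every configuration.

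The main obstacle, in my view, is the matching at $t=T$ and the ensuing case analysis: the truncation point, the saddle of the small-time model integrand, and the saddle of the large-time Macdonald integrand need not be consistently ordered, so the dominant contribution to $G(x,y)$ may come from $I_1$ or $I_2$ depending on the relative sizes of $r$, $1+|x|\wedge|y|$ and $\sqrt{m},\sqrt{M}$. Handling the asymmetric regime $|x|\ll|y|$ (where $m\ll M$) and the distant regime $r\gg 1+|x|\wedge|y|$ (where the small-time truncation discards most of the Macdonald mass) is where the bookkeeping is heaviest. The dimension-dependent factors $\Gamma(x,y)$ for $d\ge 3$, $d=2$, and $d=1$ are then a routine byproduct of the three behaviours of $K_\nu$ near the origin.
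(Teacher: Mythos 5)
Your plan is essentially the same as the paper's: plug the two-sided estimates of Theorem~\ref{thm1} into $G(x,y)=\int_0^\infty p(t,x,y)\,dt$, split the integral at the transition time $t_0(|x|\wedge|y|)$, change variables to get Bessel-type integrals, and read off the dimension-dependent factor from the three behaviours of the resulting integral near zero. The paper never names the Macdonald function $K_\nu$, but its ``key fact'' \eqref{t1-3-3} and the subsequent elementary estimates of $\int s^{-d/2}e^{-s|x-y|^2 g(|y|)}\,ds$ are exactly the $K_\nu$ asymptotics you invoke, so the computational core is identical.

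The genuine difference is organizational, and it is worth noting because it addresses the one imprecision in your plan. You split purely in time ($\int_0^T+\int_T^\infty$), deferring all geometric case analysis to the endgame; the paper first splits geometrically into a ``near'' case ($|x-y|\le|y|/2$ or $|y|\le2$) and a ``far'' case ($|x-y|>|y|/2$, $|y|>2$), and only then splits in time. This matters because your model integral $J(r,m;T)$ and its Macdonald representation are built with $a=mr^2$ where $m=\min\{g(|x|),g(|y|)\}$, while the target $\Gamma(x,y)$ is expressed with $M=\max\{g(|x|),g(|y|)\}$; your statement that the $K_\nu$ asymptotics ``produce exactly'' $\Gamma(x,y)$ is not literally true when $m\ll M$. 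The paper's geometric pre-split dissolves this tension: in the near case $|x|\asymp|y|$ forces $m\asymp M$ (so the Macdonald analysis with $m$ and with $M$ give the same answer), while in the far case one has $r\gtrsim 1+|x|\wedge|y|$, the cutoff $r^2/T$ exceeds the saddle $r\sqrt{m}$, the small-time integral degenerates to a pure exponential (no log or power factor), and the heat-kernel bound itself simplifies to \eqref{t1-3-5}. Your plan can reach the same conclusion via the ``endpoint Laplace estimate'' you mention, but you should make explicit that the complementary regime $r\gg 1+|x|\wedge|y|$ is precisely the regime $m\not\asymp M$, so that the $m$-versus-$M$ discrepancy never meets a nontrivial $K_\nu$ factor. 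With that observation in place the two proofs are the same argument in different clothing: the paper's front-loaded case split buys a cleaner bookkeeping at the cost of restating the heat kernel bound twice, while your time-first split is conceptually more uniform but leaves the reconciliation of $m$ and $M$, and the near/far dichotomy, implicit.
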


The
estimates
for Green's function $G(x,y)$
when $|x-y|$ tends to infinity
were established by Carmona and Simon \cite{CS} and Schroeder \cite{Sh}, but Proposition \ref{t1-3} seems to be the first time to present the estimate \eqref{t1-3-1} of $G(x,y)$ for full space region. Note that
$$\max\left\{\log\left(\frac{1}{|x-y|\sqrt{\max\{g(|x|),g(|y|)\}}}\right),1\right\}\asymp \log\left(\frac{1}{|x-y|}\right)$$ when $|x-y|\le \frac{1}{\sqrt{\max\{g(|x|),g(|y|)\}}}$. When $d\ge 2$, according to
\eqref{t1-3-1},
\begin{align*}
G(x,y)\asymp
\begin{cases}
Q(x,y),\ \ &|x-y|\le \frac{1}{\sqrt{\max\{g(|x|),g(|y|)\}}},\\
|x-y|^{-(d-2)}\exp\left(-|x-y|\sqrt{\max\{g(|x|),g(|y|)\}}\right),\ &|x-y|> \frac{1}{\sqrt{\max\{g(|x|),g(|y|)\}}},
\end{cases}
\end{align*}
where $Q(x,y)$ denotes Green's function of the Laplacian operator $\Delta$ on $\R^d$;
when $d=1$, it holds that
\begin{align*}
G(x,y)\asymp
\begin{cases}
\frac 1{\sqrt{\max\{g(|x|),g(|y|)\}}},\ \ &|x-y|\le \frac{1}{\sqrt{\max\{g(|x|),g(|y|)\}}},\\
|x-y|^{-(d-2)}\exp\left(-|x-y|\sqrt{\max\{g(|x|),g(|y|)\}}\right),\ &|x-y|> \frac{1}{\sqrt{\max\{g(|x|),g(|y|)\}}}.
\end{cases}
\end{align*}
The estimates above illustrate that when $d\ge 2$, $G(x,y)\asymp Q(x,y)$ in the small distance scale $|x-y|\le \frac{1}{\sqrt{\max\{g(|x|),g(|y|)\}}}$ (which depends
on the positions $x,y\in \R^d$
and the potential $V$), which indicates that the potential
$V$ only
has a remarkable effect in the large distance scale;
when $d=1$, the potential
$V$ makes a dominant
role in the estimate of $G(x,y)$ even in the small distance scale.

\ \

The rest of the paper is arranged as follows. In the next section, we will present some preliminary estimates. Then, Sections \ref{section3} and \ref{section4} are devoted to the proof of Theorem \ref{thm1} when $0<t\le C_0t_0(|x|\wedge |y|)$ and $t\ge C_0 t_0(|x|\wedge |y|)$, respectively. In Section \ref{sec5} we will give the proofs
of Example \ref{ex1-1} and Proposition \ref{t1-3}.

\section{Preliminaries}

For any $D\subset \R^d$, let $\tau_D=\inf\{t\ge 0: B_t\notin  D\}.$
We begin with the following stochastic representation for $p(t,x,y)$.

\begin{lemma}\label{l2-6}
Let $U$ be a domain of $\R^d$. Then, for every $x\in U$ and $y\notin \bar U$,
\begin{equation}\label{l2-4-2}
p(t,x,y)=\Ee_x\left[\exp\left(-\int_0^{\tau_U}V(B_s)\,ds\right)\I_{\{\tau_U\le t\}}p\left(t-\tau_U,B_{\tau_U},y\right)\right].
\end{equation}
\end{lemma}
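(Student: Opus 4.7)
The plan is to derive the identity from the Feynman–Kac representation \eqref{e1-4} by conditioning on the exit time $\tau_U$ and invoking the strong Markov property of Brownian motion. Concretely, I would fix $y_0\notin \bar U$, choose a small ball $B(y_0,\varepsilon)\subset \R^d\setminus \bar U$, and test the formula against $f\in C_b(\R^d)$ supported in $B(y_0,\varepsilon)$, then let $\varepsilon\to 0$ and use the joint continuity of $p$ asserted just after \eqref{e1-5}.

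First I would write, using \eqref{e1-4} and \eqref{e1-5},
\begin{equation*}
\int_{\R^d} p(t,x,z) f(z)\,dz = \Ee_x\left[f(B_t)\exp\left(-\int_0^t V(B_s)\,ds\right)\right].
\end{equation*}
Because $\supp f\cap \bar U=\emptyset$ and $B_s\in \bar U$ for every $s\le \tau_U$, the random variable $f(B_t)$ vanishes on $\{\tau_U>t\}$, so I may insert the indicator $\I_{\{\tau_U\le t\}}$ on the right-hand side. On $\{\tau_U\le t\}$ I split the exponential as $\exp(-\int_0^{\tau_U}V(B_s)\,ds)\cdot \exp(-\int_{\tau_U}^{t}V(B_s)\,ds)$, and then apply the strong Markov property at the stopping time $\tau_U$: conditionally on $\mathcal F_{\tau_U}$ the process $(B_{\tau_U+r})_{r\ge 0}$ is a Brownian motion started from $B_{\tau_U}$, so
\begin{equation*}
\Ee_x\!\left[f(B_t)\exp\!\left(-\!\int_0^t V(B_s)\,ds\right)\!\I_{\{\tau_U\le t\}}\right]
= \Ee_x\!\left[\exp\!\left(-\!\int_0^{\tau_U} V(B_s)\,ds\right)\!\I_{\{\tau_U\le t\}}\,T^V_{t-\tau_U}f(B_{\tau_U})\right].
\end{equation*}
Rewriting $T^V_{t-\tau_U}f(B_{\tau_U})=\int_{\R^d}p(t-\tau_U,B_{\tau_U},z)f(z)\,dz$ and applying Fubini (which is legitimate because the integrand is nonnegative after replacing $f$ by $|f|$, and the exponential factor is bounded by $1$), I obtain
\begin{equation*}
\int_{\R^d} p(t,x,z) f(z)\,dz = \int_{\R^d} \Ee_x\!\left[\exp\!\left(-\!\int_0^{\tau_U} V(B_s)\,ds\right)\!\I_{\{\tau_U\le t\}} p(t-\tau_U,B_{\tau_U},z)\right] f(z)\,dz
\end{equation*}
for every $f\in C_b(\R^d)$ supported in $\R^d\setminus \bar U$.

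Since the test functions $f$ range over a class whose supports exhaust $\R^d\setminus\bar U$, the two integrands must agree for almost every $z\in \R^d\setminus \bar U$. To promote this to a pointwise identity at $y$, I would note that the left-hand side $z\mapsto p(t,x,z)$ is continuous by the joint continuity of $p$, and that the right-hand side is also continuous in $z$ off $\bar U$ by dominated convergence (bounding $p(t-\tau_U,B_{\tau_U},z)$ via \eqref{e1-6} uniformly in $\tau_U\in[0,t]$ and $z$ in a compact set away from $\bar U$). Evaluating at $z=y$ then yields \eqref{l2-4-2}.

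The only delicate point I anticipate is justifying the passage from an $L^1$-identity to pointwise equality and the uniform integrability needed for the continuity of the right-hand side in $y$; this should however be routine once the Gaussian bound $p\le q$ noted after \eqref{e1-6} is combined with the fact that $|B_{\tau_U}-y|$ stays bounded below on compact neighbourhoods of $y$ disjoint from $\bar U$.
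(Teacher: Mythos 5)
Your proof is correct and follows essentially the same route as the paper: apply the Feynman--Kac representation to test functions $f$ supported away from $\bar U$, insert the indicator $\I_{\{\tau_U\le t\}}$ (using that $B_t\in\bar U$ on $\{\tau_U>t\}$), split the exponential at $\tau_U$ and invoke the strong Markov property, then remove the test function. The only difference is that you spell out the passage from the resulting $L^1$-identity to pointwise equality at $y$ via continuity of both sides, a step the paper leaves implicit with ``Since $f$ is chosen arbitrarily''.
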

\begin{proof}
Since $y\notin \bar U$,  there exists an open set $D$ such that
$y\in D \subset U^c$.
Taking any nonnegative function
$f\in C_b(\R^d)$ with ${\rm supp}[ f]\subset D$, it holds that
\begin{align*}
 T_t^Vf(x)
&=\Ee_x\left[f(B_t)\exp\left(-\int_0^t V(B_s)\,ds\right)\right]\\
&=\Ee_x\left[f(B_t)\exp\left(-\int_0^{\tau_U}V(B_s)\,ds-\int_{\tau_U}^tV(B_s)\,ds\right)\I_{\{\tau_U\le t\}}\right]\\
&=\Ee_x\left\{\exp\left(-\int_0^{\tau_U}V(B_s)\,ds\right)\I_{\{\tau_U\le t\}}\Ee_{B_{\tau_U}}\left[f(B_{t-\tau_U})\exp\left(-\int_0^{t-\tau_U}V(B_s)\,ds\right)\right]\right\}\\
&=\Ee_x\left[\exp\left(-\int_0^{\tau_U}V(B_s)\,ds\right)\I_{\{\tau_U\le t\}}\left(\int_{\R^d}p\left(t-\tau_U,B_{\tau_U},z\right)f(z)\,dz\right)\right]\\
&=\int_{\R^d}\Ee_x\left[\exp\left(-\int_0^{\tau_U}V(B_s)\,ds\right)\I_{\{\tau_U\le t\}}p\left(t-\tau_U,B_{\tau_U},z\right)\right]f(z)\,dz,
\end{align*}
where the second equality follows from the fact that $f(B_t)=0$ when $\tau_U>t$ thanks to ${\rm supp}[ f] \cap U=\emptyset$, in the third equality
we used the strong Markov property, and the fourth equality is a consequence of the Feynman-Kac formula \eqref{e1-4}.

Since $f$ is chosen arbitrarily,  the above equality implies the desired assertion \eqref{l2-4-2} immediately.
\end{proof}

For any $D\subset \R^d$, let $q_D(t,x,y)$ be the Dirichlet heat kernel of the standard Brownian motion $\{B_t\}_{t\ge 0}$ on $D$.
Denote by $B(x,r):=\{z\in \R^d:|z-x|<r\}$ the Euclidean ball with center $x\in \R^d$ and radius $r>0$.
The following characterization for  the distribution of $(\tau_D,B_{\tau_D})$ was proved in \cite[Theorem 1.1]{H}.
\begin{lemma}
For every $x\in \R^d$ and $r>0$ we have
\begin{equation}\label{l2-5-1}
\Pp_x(\tau_{B(x,r)}\in dt, B_{\tau_{B(x,r)}}\in dz)=\frac{1}{2}\frac{\partial q_{B(x,r)}(t,x,\cdot)}{\partial n}(z)\,\sigma(dz)\,dt,
\end{equation}
where $\sigma(dz)$ denotes the Lebesgue surface measure on $\partial B(x,r)$ $($in particular, when $d=1$, $\sigma(dz)$ is the Dirac measure
at the boundary$)$,
and  $\frac{\partial q_{B(x,r)}(t,x,\cdot)}{\partial n}(z)$ denotes the exterior normal derivative
of $q_{B(x,r)}(t,x,\cdot)$ at the point $z\in D$.
\end{lemma}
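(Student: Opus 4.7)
The plan is to identify the joint law of $(\tau_{B(x,r)}, B_{\tau_{B(x,r)}})$ by characterizing its Laplace--Fourier transforms against bounded continuous boundary data. Fix $\lambda>0$ and $h\in C(\partial B(x,r))$, and set
$$u(y):=\Ee_y\!\left[e^{-\lambda\tau_{B(x,r)}}h(B_{\tau_{B(x,r)}})\right],\qquad y\in \bar B(x,r).$$
By It\^o's formula combined with the strong Markov property (or equivalently, a Feynman--Kac argument), $u$ is the unique bounded classical solution of the Dirichlet problem
$$\tfrac12\Delta u=\lambda u\ \text{in } B(x,r),\qquad u\big|_{\partial B(x,r)}=h.$$

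Next, introduce the Dirichlet resolvent kernel
$$G_\lambda(y,z):=\int_0^\infty e^{-\lambda t}\,q_{B(x,r)}(t,y,z)\,dt,$$
which serves as the Green's function of $\lambda I-\tfrac12\Delta$ on $B(x,r)$ with zero boundary data. Applying Green's identity to $u$ and $G_\lambda(x,\cdot)$ on $B(x,r)$ and using the boundary vanishing of $G_\lambda(x,\cdot)$ produces the Poisson-type representation
$$u(x)=\tfrac12\int_{\partial B(x,r)} h(z)\,\frac{\partial G_\lambda(x,\cdot)}{\partial n}(z)\,\sigma(dz),$$
where the sign/orientation of $\partial/\partial n$ is chosen (as in \eqref{l2-5-1}) so that the resulting measure is positive. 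Because $q_{B(x,r)}(t,x,\cdot)$ is smooth up to $\partial B(x,r)$ for every $t>0$, with exponential decay as $t\to\infty$ (spectral gap of the Dirichlet Laplacian on a ball) and standard parabolic bounds as $t\to 0^+$, the normal derivative may be interchanged with the Laplace transform:
$$\frac{\partial G_\lambda(x,\cdot)}{\partial n}(z)=\int_0^\infty e^{-\lambda t}\,\frac{\partial q_{B(x,r)}(t,x,\cdot)}{\partial n}(z)\,dt.$$

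Combining these steps yields, for all $\lambda>0$ and all $h\in C(\partial B(x,r))$,
$$\Ee_x\!\left[e^{-\lambda\tau_{B(x,r)}}h(B_{\tau_{B(x,r)}})\right]=\int_0^\infty\!\!\int_{\partial B(x,r)} e^{-\lambda t}h(z)\cdot\tfrac12\frac{\partial q_{B(x,r)}(t,x,\cdot)}{\partial n}(z)\,\sigma(dz)\,dt,$$
and injectivity of the Laplace transform in $\lambda$ together with Stone--Weierstrass density of continuous $h$ on $\partial B(x,r)$ pins down the joint law as in \eqref{l2-5-1}. The main technical hurdle is the Poisson representation with the precisely correct sign convention: on the ball this follows from classical boundary-regularity of $q_{B(x,r)}$ combined with an integration by parts, and in dimension one it degenerates to $\sigma$ being Dirac masses at $x\pm r$ paired with one-sided derivatives, which is easily verified by hand.
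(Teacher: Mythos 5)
The paper does not prove this lemma: it is quoted verbatim from Hsu \cite[Theorem 1.1]{H} with no argument supplied. Your proposal therefore does something the paper does not, namely give a self-contained derivation, and your route (resolvent/Laplace transform against continuous boundary data, Green's identity with the Dirichlet resolvent kernel $G_\lambda$, interchange of normal derivative with the time integral, then injectivity of the Laplace transform plus density of $C(\partial B(x,r))$) is sound. All the technical ingredients you invoke are standard for the ball: $u(y)=\Ee_y[e^{-\lambda\tau}h(B_\tau)]$ solving $\tfrac12\Delta u=\lambda u$ with continuous boundary attainment on a regular domain; smoothness of $q_{B(x,r)}(t,x,\cdot)$ up to $\partial B(x,r)$ for $t>0$; the spectral-gap decay as $t\to\infty$ and Gaussian control as $t\to 0^+$ needed to swap $\partial/\partial n$ with $\int_0^\infty e^{-\lambda t}(\cdot)\,dt$; and the one-dimensional degeneration to point masses at $x\pm r$.

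One point worth spelling out, since you flagged it but left it to convention: with $n$ the genuine outward normal, Green's second identity applied to $u$ and $G_\lambda(x,\cdot)$ (using $\tfrac12\Delta G_\lambda(x,\cdot)=\lambda G_\lambda(x,\cdot)-\delta_x$ and $G_\lambda(x,\cdot)|_{\partial B(x,r)}=0$) gives $u(x)=-\tfrac12\int_{\partial B(x,r)}h\,\partial_n G_\lambda(x,\cdot)\,d\sigma$, and the minus sign is what makes the measure nonnegative, since $G_\lambda(x,\cdot)\ge 0$ inside and vanishes on the boundary. The paper's notation in \eqref{l2-5-1} and the positivity asserted in its Lemma 2.3 implicitly absorb this sign (i.e.\ the displayed quantity is really $|\partial_n q_{B(x,r)}(t,x,\cdot)|$); your remark that one chooses the orientation so the measure is positive is exactly the right resolution, but it is worth writing the identity explicitly so the reader sees where the factor $\tfrac12$ and the sign come from. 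Hsu's original argument is essentially parabolic rather than elliptic (he works with the heat equation for the survival probability directly), so your resolvent approach is a genuinely different, and arguably more elementary, route to the same identity.
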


\begin{lemma}\label{l2-1}
There exist positive constants $C_i$, $i=1,\cdots,4$, so  that for any $R>0$, $x\in \R^d$, $t>0$ and $y\in \partial B(x,R)$,
\begin{equation}\label{l2-1-1}
\begin{split}
\frac{C_1R}{t^{d/2+1}}\exp\left(-C_2\left(\frac{R^2}{t}+\frac{t}{R^2}\right)\right)\le   \frac{\partial q_{B(x,R)}(t,x,\cdot)}{\partial n}(y) \le \frac{C_3R}{t^{d/2+1}}\exp\left(-C_4\left(\frac{R^2}{t}+\frac{t}{R^2}\right)\right).
\end{split}
\end{equation}

\end{lemma}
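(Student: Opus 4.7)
The plan is to first use Brownian scaling to reduce to the unit ball centered at the origin, then to exploit rotational symmetry to turn the boundary normal derivative into the density of a one-dimensional exit time, and finally to handle short and long times separately. Under the scaling $\tilde B_u:=R^{-1}(B_{R^2u}-x)$, the process $\tilde B$ is a standard Brownian motion starting from $0$, and by matching events $\{B_t\in dy,\tau_{B(x,R)}>t\}$ with $\{\tilde B_{t/R^2}\in d\tilde y,\tau_{B(0,1)}>t/R^2\}$ one obtains
\begin{equation*}
q_{B(x,R)}(t,x,y)=R^{-d}\,q_{B(0,1)}\!\left(t/R^2,0,(y-x)/R\right),
\end{equation*}
so differentiating in the $y$-variable (the outward normal at $y\in\partial B(x,R)$ agrees with the outward normal at $(y-x)/R\in\partial B(0,1)$) gives, with $s:=t/R^2$,
\begin{equation*}
\frac{\partial q_{B(x,R)}(t,x,\cdot)}{\partial n}(y)=R^{-(d+1)}\,\frac{\partial q_{B(0,1)}(s,0,\cdot)}{\partial n}\!\left(\tfrac{y-x}{R}\right).
\end{equation*}
It therefore suffices to prove $\Phi(s)\asymp s^{-(d/2+1)}\exp(-c(1/s+s))$ with $\Phi(s):=\partial_n q_{B(0,1)}(s,0,z)$ for $z\in\partial B(0,1)$; by rotational invariance of the Laplacian, the ball, and the starting point $0$, $\Phi(s)$ does not depend on the choice of $z$, and integrating \eqref{l2-5-1} over the unit sphere shows that $\Phi(s)$ is, up to the factor $2/|\partial B(0,1)|$, the density $f_\tau(s)$ of $\tau:=\tau_{B(0,1)}$ under $\Pp_0$. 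Equivalently, $\tau$ is the first hitting time of level $1$ by the Bessel process $|B_t|$ of index $\nu=d/2-1$ started at $0$.

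\textbf{Short- and long-time estimates.} For $s\ge 1$ I use the Dirichlet spectral expansion
\begin{equation*}
q_{B(0,1)}(s,0,y)=\sum_{k\ge 1}e^{-\lambda_k s}\varphi_k(0)\varphi_k(y),
\end{equation*}
which converges uniformly on compacts together with its derivatives for $s\ge 1/2$. Differentiating termwise at the boundary and using that the ground state $\varphi_1$ is rotationally symmetric with $\varphi_1(0)>0$ and outward normal derivative strictly negative on $\partial B(0,1)$, the $k=1$ term dominates and $\Phi(s)\asymp e^{-\lambda_1 s}$; this is comparable to $s^{-(d/2+1)}e^{-C(1/s+s)}$ on $[1,\infty)$ provided $C_4<\lambda_1<C_2$, since the polynomial factor and $e^{-C/s}$ are then bounded above and below on this range. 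For $0<s\le 1$ I invoke the classical explicit Laplace transform of $\tau$ in terms of the modified Bessel function $I_\nu$ with $\nu=d/2-1$; Watson's large-argument asymptotics of $I_\nu$ inverts to the two-sided short-time estimate $f_\tau(s)\asymp s^{-(d/2+1)}e^{-1/(2s)}$ as $s\downarrow 0$. As a concrete alternative in $d=1$, the image-method series $f_\tau(s)=\sum_{n\in\Z}(2n+1)(2\pi s^3)^{-1/2}e^{-(2n+1)^2/(2s)}$ is dominated two-sidedly by its $n\in\{0,-1\}$ terms for small $s$, and the same idea extends to $d\ge 2$ via the Bessel-series expansion of $q_{B(0,1)}(s,0,y)$.

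\textbf{Main obstacle and conclusion.} The principal technical point is the short-time lower bound: because $q_{B(0,1)}(s,0,\cdot)$ vanishes on $\partial B(0,1)$, one must recover not merely the Gaussian factor $e^{-1/(2s)}$ but also the prefactor $s^{-(d/2+1)}$ uniformly, and naive upper bounds via $q(s,0,y)\le(2\pi s)^{-d/2}e^{-|y|^2/(2s)}$ are too crude to produce the matching lower bound. The Bessel-series route above achieves this with minimal effort. A self-contained probabilistic alternative bounds $\Pp_0(\tau\in[s,2s],\,B_\tau\in U)$ from below for a small cap $U\ni z$ by the probability of a tube event of width of order $\sqrt{s}$ connecting $0$ to $U$, and then uses a boundary Harnack principle to transfer the mass uniformly over $\partial B(0,1)$. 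Once $\Phi(s)\asymp s^{-(d/2+1)}e^{-c(1/s+s)}$ is established separately on $(0,1]$ and on $[1,\infty)$, continuity at $s=1$ glues the two regimes into a single bound valid for all $s>0$, and rewriting this via the scaling identity from Step~1 yields \eqref{l2-1-1}.
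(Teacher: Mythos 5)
Your plan is correct in outline but takes a genuinely different route from the paper's. The paper's proof is short: it reduces to $x=0$ by translation, then cites ready-made two-sided bounds for $q_{B(0,1)}(t,x,y)$ from Zhang \cite{Z} (for $d\ge 3$) and Malecki--Serafin \cite{MS} (for general $d$), which take the form
\begin{equation*}
q_{B(0,1)}(t,x,y)\asymp t^{-d/2}\Bigl(\tfrac{(1-|x|)(1-|y|)}{t}\wedge 1\Bigr)e^{-c|x-y|^2/t}\ \ (t\le 1),\qquad q_{B(0,1)}(t,x,y)\asymp (1-|x|)(1-|y|)e^{-ct}\ \ (t>1).
\end{equation*}
Because the distance-to-boundary factor $(1-|y|)$ appears explicitly, the normal derivative at $y\in\partial B(0,1)$ is read off directly (it is the limit of $q/(1-|y|)$), and the scaling identity finishes the job; no exit-time density analysis is needed. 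Your approach instead exploits the special starting point $0$: by rotational invariance the boundary normal derivative is a constant multiple of the exit-time density $f_\tau(s)$ for the Bessel$(d)$ process, which is conceptually elegant and reduces the lemma to a classical one-dimensional estimate. The scaling and rotational reductions are correct, and the large-time spectral argument ($f_\tau(s)\asymp e^{-\lambda_1 s}$ for $s\ge 1$, then absorbing the polynomial and $e^{-c/s}$ factors) is fine.

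The one real gap is exactly the obstacle you flagged: the short-time two-sided bound $f_\tau(s)\asymp s^{-(d/2+1)}e^{-1/(2s)}$ for $d\ge 2$. Saying that Watson's large-argument asymptotics of $I_\nu$ ``invert to'' this bound is not a proof --- Laplace-transform asymptotics at infinity do not automatically give two-sided bounds on the density near $0$ without a Tauberian argument or an explicit resummation. Likewise, ``the same idea extends to $d\ge 2$ via the Bessel-series expansion'' conceals the real work: the spectral series in zeros of $J_\nu$ converges well for large $s$ but is useless for $s\downarrow 0$, and passing to an image-type series good for small $s$ requires the Jacobi-theta-type transformation for Bessel functions, which is nontrivial and not written out. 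The probabilistic alternative you sketch (tube event plus boundary Harnack) would also need to be carried out carefully to get the sharp prefactor $s^{-(d/2+1)}$ in the lower bound. So your structure would yield the lemma once that short-time estimate is supplied, either by citing it precisely from the Bessel-hitting-time literature or by falling back on the Dirichlet heat kernel bounds, as the paper does.
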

\begin{proof}
Due to the spatial homogeneous property of Brownian motion $\{B_t\}_{t\ge 0}$,  it holds that $$q_{B(x,R)}(t,x,y)=q_{B(0,R)}(t,0,y-x),\quad
t,R>0,x,y\in \R^d,$$  and so it suffices to prove \eqref{l2-1-1} for $x=0$.

According to \cite[Theorem 1.1]{Z} (for $d\ge 3$) and \cite[Theorem 1]{MS} (for $d\ge 1$), for any $t\in (0,1]$ and $x,y\in B(0,1)$,
\begin{equation}\label{l2-1-2a}
\begin{split}
 {c_1{t^{-d/2}}\left(\frac{(1-|x|)(1-|y|)}{t}\wedge 1\right)}e^{-\frac{c_2|x-y|^2}{t}}&\le q_{B(0,1)}(t,x,y)\\
 &
\le {c_3{t^{-d/2}}\left(\frac{(1-|x|)(1-|y|)}{t}\wedge 1\right)} e^{-\frac{c_4|x-y|^2}{t}};
\end{split}
\end{equation}while for any $ t\in (1,+\infty)$ and $x,y\in B(0,1)$,
\begin{equation}\label{l2-1-3}
c_1(1-|x|)(1-|y|)e^{-c_2t}\le q_{B(0,1)}(t,x,y)
\le c_3(1-|x|)(1-|y|)e^{-c_4t}.
\end{equation}

Using \eqref{l2-1-2a}, \eqref{l2-1-3} and the fact that $q_{B(0,1)}(t,0,y)=0$ for all
$y\in \partial B(0,1)$, we can obtain immediately that for all $y\in \partial B(0,1)$ and $t\in (0,1]$,
$$\frac{c_5}{t^{d/2+1}}\exp\left(-\frac{c_6}{t}\right)\le \frac{\partial q_{B(0,1)}(t,0,\cdot)}{\partial n}(y)\le \frac{c_7}{t^{d/2+1}}\exp\left(-\frac{c_8}{t}\right);$$
while for all $y\in \partial B(0,1)$ and $t\in (1,\infty)$,
$$ c_5\exp\left(-c_6t\right)\le \frac{\partial q_{B(0,1)}(t,0,\cdot)}{\partial n}(y)\le c_7\exp\left(-c_8 t\right).$$
Hence, it holds that for all  $t>0$ and $y\in \partial B(0,1)$,
\begin{equation}\label{l2-1-2}
\frac{c_9}{t^{d/2+1}}\exp\left(-c_{10}\left(\frac{1}{t}+t\right)\right)\le \frac{\partial q_{B(0,1)}(t,0,\cdot)}{\partial n}(y)
\le \frac{c_{11}}{t^{d/2+1}}\exp\left(-c_{12}\left(\frac{1}{t}+t\right)\right).
\end{equation}

On the other hand, thanks to the scaling property of the standard  Brownian motion $\{B_t\}_{t\ge0}$, for any $R>0$, $t>0$ and $y\in B(0,1)$,
\begin{equation*}
q_{B(0,1)}(t,0,y)=R^dq_{B(0,R)}(R^2t,0,Ry).
\end{equation*}

Combining this with \eqref{l2-1-2} yields that for every $t>0$ and $y\in \partial B(0,R)$,
\begin{align*}
\frac{\partial q_{B(0,R)}(t,0,\cdot)}{\partial n}(y)&=
R^{-(d+1)}\frac{\partial q_{B(0,1)}(R^{-2}t,0,\cdot)}{\partial n}(R^{-1}y)\\
&\le \frac{c_{11}}{R^{d+1}(R^{-2}t)^{d/2+1}}\exp\left(-c_{12}\left(\frac{1}{R^{-2}t}+R^{-2}t\right)\right)\\
&=\frac{c_{11}R}{t^{d/2+1}}\exp\left(-c_{12}\left(\frac{R^2}{t}+\frac{t}{R^2}\right)\right).
\end{align*}

Similarly, applying the first inequality in \eqref{l2-1-2}, we can obtain
that for every $t>0$ and $y\in \partial B(0,R)$,
\begin{align*}
\frac{\partial q_{B(0,R)}(t,0,\cdot)}{\partial n}(y)
\ge \frac{c_{13}R}{t^{d/2+1}}\exp\left(-c_{14}\left(\frac{R^2}{t}+\frac{t}{R^2}\right)\right).
\end{align*}
Therefore, the desired assertion \eqref{l2-1-1} follows.
\end{proof}

With aid of all the lemmas above, we have the following upper bound estimates for $p(t,x,y)$ when $|x-y|>C_0t^{1/2}$.

\begin{lemma}\label{l2-4}
Given any constant $C_0>0$, there exist positive constants $C_{5}$ and $C_{6}$ such that for all $x,y\in \R^d$ and $t>0$ with $|x-y|>2C_0t^{1/2}$,
\begin{equation}\label{l2-4-1}
   p(t,x,y)\le C_{5}t^{-d/2}\exp\left(-C_{6}\left(\frac{|x-y|^2}{t}+|x-y|\sqrt{\max\{g(|x|),g(|y|)\}}\right)\right).
\end{equation}
\end{lemma}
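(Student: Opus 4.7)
The plan is to apply Lemma \ref{l2-6} with $U = B(x, r^\ast)$ for a well-chosen radius $r^\ast$, and to combine the resulting stochastic representation with the trivial bound $p \le q$ and the exit-density estimates from \eqref{l2-5-1} and Lemma \ref{l2-1}. Without loss of generality I would assume $|x| \ge |y|$, so that $\max\{g(|x|), g(|y|)\} = g(|x|)$. If $|x|$ is bounded by some fixed $R_0$, then $|y|$ is bounded too, hence $|x-y|$ and $\sqrt{g(|x|)}$ are both bounded and the desired inequality reduces to $p(t,x,y) \le q(t,x,y)$ up to a multiplicative constant. So it suffices to treat the main case $|x| > R_0$, with $R_0$ chosen large enough that Assumption (H) applies throughout $B(x, |x|/3)$.

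In this main case I would take $r^\ast := \min\{|x|/3,\,|x-y|/2\}$. Two features are essential: first, $B(x,r^\ast) \subset \{z : 2|x|/3 \le |z| \le 4|x|/3\}$, so by Assumption (H) and the doubling property \eqref{e1-2a} one has $V(z) \ge v_0 := c\,g(|x|)$ uniformly on $B(x,r^\ast)$; second, the inequality $|x-y| \le 2|x|$ (forced by $|y| \le |x|$) implies $|x-y|/6 \le r^\ast \le |x-y|/2$, so the exit point satisfies $|B_{\tau_U}-y| \ge |x-y|/2$ while simultaneously $r^\ast\sqrt{v_0}$ is comparable to $|x-y|\sqrt{g(|x|)}$, which is exactly the exponent that should appear. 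Inserting the bound $p \le q$ into Lemma \ref{l2-6}, using $V \ge v_0$ on $U$, and unpacking the expectation via \eqref{l2-5-1} and the upper estimate in Lemma \ref{l2-1}, I would arrive at
\begin{equation*}
p(t,x,y) \le C\int_0^t e^{-v_0 s}(t-s)^{-d/2} e^{-|x-y|^2/(8(t-s))}\,\frac{(r^\ast)^d}{s^{d/2+1}}\,e^{-C_4(r^{\ast 2}/s + s/r^{\ast 2})}\,ds.
\end{equation*}

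Next I would split the integral at $s=t/2$. On $(0,t/2)$ the Gaussian factor is controlled by $Ct^{-d/2}e^{-|x-y|^2/(8t)}$, and the substitution $u = r^{\ast 2}/s$ reduces the remaining $s$-integral to a modified Bessel-type integral $\int u^{d/2-1}e^{-C_4 u - (v_0 r^{\ast 2}+C_4)/u}\,du$, which is bounded by $C e^{-c\sqrt{v_0}\,r^\ast} \le C e^{-c'|x-y|\sqrt{g(|x|)}}$ after polynomial prefactors are absorbed into the exponential via $z^k e^{-cz} \le C_k$. On $(t/2,t)$ the factor $e^{-v_0 s} \le e^{-v_0 t/2}$ comes out; the change of variable $v = t-s$ turns the leftover Gaussian piece into something comparable to $|x-y|^{-2} t^{2-d/2} e^{-|x-y|^2/(4t)}$; and finally the AM--GM inequality $\tfrac12 v_0 t + \alpha|x-y|^2/t \ge c|x-y|\sqrt{g(|x|)} + \alpha'|x-y|^2/t$ (applied after splitting each of the two terms in half) produces both required exponents simultaneously, with remaining polynomial factors of the form $(|x-y|^2/t)^k$ absorbed by a sliver of the Gaussian.

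The main obstacle, and the reason the choice of $r^\ast$ is delicate, is extracting the full exponent $|x-y|\sqrt{g(|x|)}$ from a \emph{single} application of the stochastic representation rather than something weaker like $|x|\sqrt{g(|x|)}$. The cap $r^\ast \le |x|/3$ is needed to keep $V$ uniformly large on the ball, while the cap $r^\ast \le |x-y|/2$ is needed to keep $y$ outside the ball with $|B_{\tau_U}-y| \gtrsim |x-y|$; the observation that $|y|\le|x|$ implies $|x-y| \le 2|x|$ (so these two caps are always comparable) is precisely what prevents the necessity of iterating over many concentric balls and lets a one-step argument recover the sharp exponent.
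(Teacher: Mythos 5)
Your proposal is correct and takes essentially the same route as the paper: both apply Lemma \ref{l2-6} with a ball of radius $\asymp |x-y|$ centered at the point of larger norm (the paper uses $B(y,|x-y|/3)$ under the convention $|x|\le|y|$, you use $B(x,\min\{|x|/3,|x-y|/2\})$ under $|y|\le|x|$, and these radii are comparable precisely because $|x-y|\le 2\max\{|x|,|y|\}$), bound $V$ from below by $c\,g(\max\{|x|,|y|\})$ on that ball, bound the inner $p$ by the free Gaussian kernel $q$, and then estimate the resulting one-dimensional $s$-integral. The only difference is in how the $s$-integral is handled --- the paper absorbs the $(t-s)^{-d/2}$ prefactor early via the Gaussian factor in $s$ and then minimizes $s g + |x-y|^2/s$ over $[0,t]$, whereas you split at $s=t/2$, change variables to a Bessel-type integral, and invoke AM--GM --- but both routes yield the same exponent and constants up to relabeling.
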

\begin{proof}
Without loss of generality, we assume that
$|x|\le |y|$.
In particular,
it holds that $|x-y|\le 2|y|$.
Now suppose that $|x-y|>2C_0t^{1/2}$. Set $U=B(y,{|x-y|}/{3})$. Then, according to \eqref{l2-4-2},
\begin{align*}
 p(t,x,y) 
 &=p(t,y,x)\\
&\le \Ee_y\left[\exp\left(-\tau_U \inf_{u\in U}V(u)\right)p\left(t-\tau_U,B_{\tau_U},x\right)\right]\\
&=\frac 1{2}\int_0^t \exp\left(-s \inf_{u\in U}V(u)\right)\cdot\left(\int_{\partial U}p(t-s,z,x)\frac{\partial q_U(s,y,\cdot)}{\partial n}(z)\,\sigma(dz)\right)\,ds\\
&\le c_1\int_0^t e^{-c_2sg(|y|)}(t-s)^{-d/2}e^{-\frac{c_2|x-y|^2}{t-s}}|\partial U|
\frac{|x-y|}{s^{d/2+1}}e^{-\frac{c_2|x-y|^2}{s}}\,ds\\
&\le c_3\int_0^t e^{-c_2sg(|y|)}(t-s)^{-d/2}e^{-\frac{c_2|x-y|^2}{t-s}}
\left(\frac{|x-y|^2}{s}\right)^{d/2}e^{-\frac{c_2|x-y|^2}{s}}s^{-1}ds\\
&\le c_4t^{-d/2-1}e^{-\frac{c_5|x-y|^2}{t}}\int_0^t \exp\left(-c_5\left(sg(|y|)+\frac{|x-y|^2}{s}\right)\right)ds.
\end{align*}
Here the second equality above is due to \eqref{l2-5-1}; in the second inequality we have used \eqref{l2-1-1}, and
the facts that $|x-y|>2C_0t^{1/2}$, $\inf_{u\in U}V(u)\ge c_6g(|y|)$ (which is deduced from \eqref{e1-2a}, \eqref{e1-3}
and  $|x-y|\le 2|y|$), and
$$p(t-s,z,x)\le q(t-s,z,x)\le c_7(t-s)^{-d/2}e^{-\frac{c_8|x-y|^2}{t-s}},\quad z\in \partial U;$$
the third inequality is due to $|\partial U|\le c_9|x-y|^{d-1}$; and the fourth inequality follows from
the fact that, thanks to $|x-y|>2C_0t^{{1}/{2}}$, for all $s\in (0,t)$,
\begin{align*}
&(t-s)^{-d/2}e^{-\frac{c_2|x-y|^2}{t-s}}
\left(\frac{|x-y|^2}{s}\right)^{d/2}e^{-\frac{c_2|x-y|^2}{s}}s^{-1}\\
&=|x-y|^{-d-2}\left(\frac{|x-y|^2}{t-s}\right)^{d/2}e^{-\frac{c_2|x-y|^2}{t-s}}
\left(\frac{|x-y|^2}{s}\right)^{d/2+1}
e^{-\frac{c_2|x-y|^2}{s}}\\
&\le c_{10} |x-y|^{-d-2} e^{-\frac{c_{11}|x-y|^2}{t-s}} e^{-\frac{c_{11}|x-y|^2}{s}} \le c_{12} t^{-d/2-1}e^{-\frac{c_{11}|x-y|^2}{t}}e^{-\frac{c_{11}|x-y|^2}{s}},
\end{align*}
where in the first inequality we have used
$$
\left(\frac{|z|^2}{s}\right)^{m} e^{-\frac{c_{2}|z|^2}{s}}\le e^{-\frac{c_{13}|z|^2}{s}},\quad s>0,\ m>0,\ z\in \R^d\ {\rm with}\ |z|>2C_0s^{1/2}$$
and the last inequality is again due to  $|x-y|>2C_0t^{{1}/{2}}$.

Now set $G(s):=sg(|y|)+\frac{|x-y|^2}{s}$ for all $s>0$. We can check directly that
$$
\inf_{s\in [0,t]}G(s)\ge
\begin{cases}
G(t)= tg(|y|)+\frac{|x-y|^2}{t}, &\ t\le \frac{|x-y|}{\sqrt{g(|y|)}},\\
G\left(\sqrt{\frac{|x-y|^2}{g(|y|)}}\right)=2|x-y|\sqrt{g(|y|)}, &\ t> \frac{|x-y|}{\sqrt{g(|y|)}}.
\end{cases}
$$
Putting this into the estimate above for $p(t,x,y)$, we arrive at
\begin{align*}
p(t,x,y)&\le
\begin{cases}
c_{14}t^{-d/2}\exp\left(-c_{15}\left(tg(|y|)+\frac{|x-y|^2}{t}\right)\right), &\ t\le \frac{|x-y|}{\sqrt{g(|y|)}},\\
c_{14}t^{-d/2}\exp\left(-c_{15}\left(|x-y|\sqrt{g(|y|)}+\frac{|x-y|^2}{t}\right)\right), &\ t> \frac{|x-y|}{\sqrt{g(|y|)}},
\end{cases}
\\
&\le c_{16}t^{-d/2}\exp\left(-c_{17}\left(|x-y|\sqrt{g(|y|)}+\frac{|x-y|^2}{t}\right)\right),
\end{align*}
where in the last inequality we used the fact that
$$tg(|y|)\le |x-y|\sqrt{g(|y|)}\le \frac{|x-y|^2}{t}\quad {\rm if}\,\, t\le \frac{|x-y|}{\sqrt{g(|y|)}}.$$ The proof is complete.
\end{proof}

In the rest of this section, we will give upper bound estimates for $T_t^V1(x)$.

\begin{lemma}\label{l2-2} There are constants $C_7,C_8>0$ such that for all $t>0$ and $x\in \R^d$ with $|x|\ge2$,
\begin{equation}\label{l2-2-1}
\begin{split}
T_t^V1(x)\le C_7\left(\exp(-C_8tg(|x|))+\exp\left(-\frac{C_8|x|^2}{t}\right)\right).
\end{split}
\end{equation}
\end{lemma}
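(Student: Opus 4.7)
The plan is to decompose the Feynman--Kac expectation defining $T_t^V 1(x) = \Ee_x[\exp(-\int_0^t V(B_s)\,ds)]$ according to whether the Brownian motion exits the ball $U:=B(x,|x|/3)$ before time $t$. That is, I would write
\begin{align*}
T_t^V 1(x) &= \Ee_x\left[e^{-\int_0^t V(B_s)\,ds}\I_{\{\tau_U\ge t\}}\right] + \Ee_x\left[e^{-\int_0^t V(B_s)\,ds}\I_{\{\tau_U< t\}}\right],
\end{align*}
and show that the first term contributes the $\exp(-C_8 tg(|x|))$ part and the second contributes the Gaussian $\exp(-C_8|x|^2/t)$ part.

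For the first term, the key preliminary observation is that $V$ is uniformly large on $U$. Indeed, any $z\in U$ satisfies $2|x|/3\le |z|\le 4|x|/3$, and since $|x|\ge 2$ we have $|z|\ge 1$, so \textbf{Assumption (H)} applies and $V(z)\ge C_1 g(|z|)\ge C_1 g(2|x|/3)$. Using monotonicity together with the doubling property \eqref{e1-2a}, namely $g(|x|)\le g(4|x|/3)\le c_0 g(2|x|/3)$, one obtains $\inf_{z\in U}V(z)\ge c_1 g(|x|)$ for some $c_1>0$. On $\{\tau_U\ge t\}$ the path $B_s$ stays in $U$ for the full interval $[0,t]$, so $\int_0^t V(B_s)\,ds\ge c_1 t\,g(|x|)$, which yields
\[
\Ee_x\left[e^{-\int_0^t V(B_s)\,ds}\I_{\{\tau_U\ge t\}}\right]\le \exp(-c_1 t\,g(|x|)).
\]

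For the second term, I would apply the strong Markov property at $\tau_U$ together with $T^V_{s}1\le 1$ (which follows from $V\ge 0$) to bound it by
\[
\Ee_x\left[e^{-\int_0^{\tau_U}V(B_s)\,ds}\I_{\{\tau_U< t\}}\,T^V_{t-\tau_U}1(B_{\tau_U})\right]\le \Pp_x(\tau_U< t).
\]
It then remains to control the exit probability of Brownian motion from a ball of radius $|x|/3$ centred at its starting point by a Gaussian. By translation invariance and the standard maximal inequality, $\Pp_x(\tau_U< t)=\Pp_0(\sup_{s\le t}|B_s|\ge |x|/3)\le c_2 \exp(-c_3|x|^2/t)$. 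Combining the two bounds delivers \eqref{l2-2-1}.

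Neither step requires deep input: the doubling condition on $g$ takes care of the geometric issue of pulling the lower bound on $V$ from the shell $\{|z|\sim |x|\}$ back to the single value $g(|x|)$, and the Gaussian tail for the radial exit time is classical. The only mild subtlety, which I would write out carefully, is the verification of the uniform lower bound $\inf_{U}V\ge c_1 g(|x|)$, since it combines Assumption (H), monotonicity of $g$, and the doubling inequality in a single line.
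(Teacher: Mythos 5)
Your proof is correct and close to the paper's. You use the same decomposition by the exit time $\tau_U$ from a ball $U$ centred at $x$ with radius comparable to $|x|$, and your bound on the term over $\{\tau_U\ge t\}$ --- derived from Assumption~(H), the monotonicity of $g$, and the doubling inequality \eqref{e1-2a} to get $\inf_U V\ge c\,g(|x|)$ --- is the same as the paper's. For the second term you take a shorter route: the paper further splits $\{\tau_U\le t\}$ according to whether $B_t$ lies in $B(x,|x|/3)$ or not, applies the strong Markov property to the first piece, and estimates each piece via the Gaussian heat kernel $q(s,z,y)$, whereas you bound the whole second term by $\Pp_x(\tau_U<t)$ and invoke the reflection-type maximal inequality $\Pp_0\bigl(\sup_{s\le t}|B_s|\ge r\bigr)\le c\exp(-c' r^2/t)$. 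This avoids the extra two-way split and the heat kernel computation, at the modest cost of bringing in the maximal inequality rather than relying only on the transition density; both routes give the same qualitative Gaussian bound. One small remark: the strong Markov step in your second bound is superfluous --- since $V\ge0$ the elementary estimate $e^{-\int_0^t V(B_s)\,ds}\le 1$ already yields $\Ee_x\bigl[e^{-\int_0^t V(B_s)\,ds}\I_{\{\tau_U<t\}}\bigr]\le \Pp_x(\tau_U<t)$ directly.
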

\begin{proof}
According to \eqref{e1-4}, for any $x\in \R^d$ with $|x|\ge 2$,
\begin{align*}
T_t^V 1(x)&=\Ee_x\left[\exp\left(-\int_0^t V(B_s)\,ds\right)\I_{\{\tau_U>t\}}\right]+
\Ee_x\left[\exp\left(-\int_0^t V(B_s)\,ds\right)\I_{\{\tau_U\le t\}}\right]\\
&=:I_1(x)+I_2(x),
\end{align*}
where $U=B\left(x,{|x|}/{2}\right)$.

By \eqref{e1-2a} and \eqref{e1-3} we have
$\inf_{u\in U}V(u)\ge c_1g(|x|)$,
which implies immediately that
\begin{equation*}
I_1(x)\le \exp\left(-c_1tg(|x|)\right).
\end{equation*}

On the other hand,
\begin{align*}
I_2(x)\le& \Ee_x\left[\exp\left(-\int_0^t V(B_s)\,ds\right)\I_{\{\tau_U\le t,B_t\in B(x,{|x|}/{3})\}}\right]\\
&+\Ee_x\left[\exp\left(-\int_0^t V(B_s)\,ds\right)\I_{\{\tau_U\le t,B_t\notin B(x,{|x|}/{3})\}}\right]\\
=:&I_{21}(x)+I_{22}(x).
\end{align*}
By the strong Markov property,
\begin{align*}
I_{21}(x)&\le \Pp_x\left(\tau_U\le t, B_t\in B(x,{|x|}/{3})\right)\le \Ee_x\left[\Pp_{B_{\tau_U}}\left(B_{t-\tau_U}\in B(x,{|x|}/{3})\right)\right]\\
&\le \sup_{s\in [0,t],z\in \partial U}\int_{B(x,{|x|}/{3})}q(s,z,y)\,dy\\
&\le c_2\sup_{s\in [0,t]}\left( \left(\frac{|x|^2}{s}\right)^{{d}/{2}}\exp\left(-\frac{c_3|x|^2}{s}\right)\right)
\le c_4\exp\left(-\frac{c_5|x|^2}{t}\right),
\end{align*}
where in the fourth inequality we used \eqref{e1-6}.
At the same time, it holds that
\begin{align*}
I_{22}(x)&\le \Pp_x\left(B_t\notin B(x,{|x|}/{3})\right)=\int_{B(x,{|x|}/{3})^c}q(t,x,y)\,dy\\
&\le c_6t^{-d/2}\int_{B(x,{|x|}/{3})^c}\exp\left(-\frac{c_7|x-y|^2}{t}\right)\,dy\le c_{8}\exp\left(-\frac{c_{9}|x|^2}{t}\right),
\end{align*}
where the second inequality is due to \eqref{e1-6} and in the last step we have used the fact $|x|\ge 2$.

Combining all above estimates for $I_1(x)$ and $I_2(x)$ together yields the desired conclusion \eqref{l2-2-1}.
\end{proof}

\begin{lemma}\label{l2-3}
There exist positive constants $C_9, C_{10}$ such that for all $t>0$ and $x\in \R^d$,
\begin{equation}\label{l2-3-1}
T_t^V1(x)\le C_{9}e^{-C_{10}t}.
\end{equation}
\end{lemma}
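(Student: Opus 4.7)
The plan is to exploit the sub-multiplicative structure of $t\mapsto\|T_t^V 1\|_\infty$, which reduces the desired uniform exponential bound to producing a single $t_0>0$ at which $\|T_{t_0}^V 1\|_\infty<1$. Since $T_t^V$ is positivity-preserving (from \eqref{e1-4}), applying $T_t^V$ to the pointwise inequality $T_s^V 1\le \|T_s^V 1\|_\infty$ gives
\begin{equation*}
T_{t+s}^V 1(x)=T_t^V(T_s^V 1)(x)\le \|T_s^V 1\|_\infty\, T_t^V 1(x),
\end{equation*}
so $\|T_{t+s}^V 1\|_\infty\le \|T_t^V 1\|_\infty\|T_s^V 1\|_\infty$. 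Once I have $\rho:=\|T_1^V 1\|_\infty<1$, the estimate $\|T_n^V 1\|_\infty\le \rho^n$ for integers $n$ together with $T_r^V 1\le 1$ for $r\in[0,1)$ yields $\|T_t^V 1\|_\infty\le \rho^{\lfloor t\rfloor}\le \rho^{-1}\rho^t=C_9 e^{-C_{10}t}$ with $C_9:=\rho^{-1}$ and $C_{10}:=-\log\rho>0$.

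To prove $\rho<1$, I would show that $T_1^V 1$ is continuous on $\R^d$, vanishes at infinity, and is pointwise strictly below $1$; then $T_1^V 1\in C_0(\R^d)$ attains its supremum on a compact set, and this supremum is strictly less than $1$. The decay at infinity follows immediately from Lemma \ref{l2-2}: for $|x|\ge 2$ one has $T_1^V 1(x)\le C_7[\exp(-C_8 g(|x|))+\exp(-C_8|x|^2)]$, and both summands vanish as $|x|\to\infty$ because $\lim_{r\to\infty}g(r)=\infty$. Continuity follows by dominated convergence applied to $T_1^V 1(x)=\int p(1,x,y)\,dy$, using the joint continuity of $p$, the pointwise bound $p(1,x,y)\le q(1,x,y)$, and the Gaussian off-diagonal decay of Lemma \ref{l2-4}, which together provide a locally uniform integrable majorant in $y$ as $x$ varies over a compact neighborhood.

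For the strict pointwise inequality $T_1^V 1(x)<1$ I would use the Feynman--Kac representation $T_1^V 1(x)=\Ee_x[\exp(-\int_0^1 V(B_s)\,ds)]$. Under \textbf{Assumption (H)}, $V>0$ on $\{|y|>1\}$, and for any starting point $x\in\R^d$ the continuous Brownian path enters $\{|y|>1\}$ with positive $\Pp_x$-probability within time $1$ and, by continuity, remains in that set on a time interval of positive length. Hence $\int_0^1 V(B_s)\,ds>0$ on an event of positive probability, which forces the expectation to be strictly less than $1$.

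The main obstacle is the step producing $\rho<1$: one must combine the three ingredients (strict pointwise inequality, vanishing at infinity, and continuity) in order to conclude, via compactness in $C_0(\R^d)$, that the supremum is actually attained and is strictly below $1$. The strict pointwise inequality is intuitive (it is a form of irreducibility of Brownian motion against the nontrivial potential) but requires a careful appeal to path continuity, while the continuity and decay at infinity rely on Lemmas \ref{l2-2} and \ref{l2-4} already proved in this section.
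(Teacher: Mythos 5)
Your proposal is correct and follows essentially the same route as the paper: reduce to showing $\sup_{x}T_1^V 1(x)<1$ via submultiplicativity, and obtain that strict inequality by combining pointwise strictness (from Feynman--Kac and the fact that Brownian paths spend positive time where $V$ is bounded below by a positive constant), continuity of $x\mapsto T_1^V 1(x)$, and decay at infinity from Lemma~\ref{l2-2}. The paper makes the pointwise step slightly cleaner by fixing the deterministic event $\Lambda:=\{B_t\in A\ \text{for all}\ t\in[1/2,1]\}$ with $A:=\{V\ge 1\}$, which gives a uniform-in-$\omega$ lower bound $\int_0^1 V(B_s)\,ds\ge 1/2$ on $\Lambda$ rather than a random positive-length interval, but this is a presentational difference, not a different argument.
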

\begin{proof} By \eqref{e1-4} and the fact that $V\ge0$, it is obvious that \eqref{l2-3-1} holds for all $0<t\le 1$.

Next, we consider the case that $t>1$.
Let $A:=\{x\in \R^d: V(x)\ge 1\}$. By
the local boundness of $V$
and \eqref{e1-3} we know $|A|>0$, and so
$\Pp_x(\Lambda)>0$ for every $x\in \R^d$, where
$$\Lambda:=\left\{\omega\in \Omega: B_t(\omega)\in A\ {\rm for\ all}\ t\in \left[1/2,1\right]\right\}.$$
Thus, for every $x\in \R^d$,
\begin{align*}
T_1^V1(x)&=\Ee_x\left[\exp\left(-\int_0^1 V(B_s)\,ds\right)\right]\\
&=\Ee_x\left[\exp\left(-\int_0^1 V(B_s)\,ds\right)\I_{\Lambda}\right]+\Ee_x\left[\exp\left(-\int_0^1 V(B_s)ds\right)\I_{\Lambda^c}\right]\\
&\le e^{-1/2}\Pp_x(\Lambda)+\Pp_x(\Lambda^c)<
\Pp_x(\Lambda)+\Pp_x(\Lambda^c)=1.
\end{align*}
Therefore, by the continuity of $x\mapsto T_1^V1(x)$, we obtain that for every compact set $K\subset \R^d$,
\begin{equation}\label{l2-3-2}
\sup_{x\in K}T_1^V(x)<1.
\end{equation}

On the other hand, according to \eqref{l2-2-1},
\begin{equation*}
\lim_{|x|\to \infty}T_1^V(x)=0.
\end{equation*}

Combining this with \eqref{l2-3-2} yields that there exists a positive constant $a\in (0,1)$ such that
$$
\sup_{x\in \R^d}T_1^V(x)=a<1.
$$

By applying the routine iteration arguments, we arrive at that for all $k\in \mathbb{N}_+$,
\begin{equation*}
\sup_{x\in \R^d}T_k^V1(x)\le a^k=e^{-(\log a^{-1}) k}.
\end{equation*} Furthermore, for any $t>1$,
\begin{equation*}
\sup_{x\in \R^d}T_t^V1(x)=\sup_{x\in \R^d}T_{t-[t]}^V\left(T_{[t]}^V1\right)(x)\le
\sup_{x\in \R^d}T_{[t]}^V1(x)\le e^{-(\log a^{-1}) [t]}\le a^{-1}e^{-(\log a^{-1}) t},
\end{equation*}
and so the proof is complete.
\end{proof}

Below, we define
\begin{equation}\label{e2-1}
t_0(s):=\frac{1+s}{\sqrt{g(s)}},\quad s>0.
\end{equation}
By applying Lemma \ref{l2-3}, we can obtain the following improvement of \eqref{l2-2-1}.

\begin{corollary}\label{E:cor}
For any constant $C_0>0$, there are constants $C_{11},C_{12}>0$ such that for all $t>0$ and $x\in \R^d$
\begin{equation}\label{r2-1-}
T_t^V 1(x)\le
 \begin{cases} C_{11}
\exp\left(-C_{12}tg(|x|)\right),\quad&0<t\le C_0t_0(|x|),\\
C_{11}\exp\left(-C_{12}(t+(1+|x|)\sqrt{g(|x|)})
\right),\quad &t\ge C_0t_0(|x|).\end{cases}
\end{equation}
\end{corollary}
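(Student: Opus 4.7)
My plan is to combine Lemma \ref{l2-2} (which already captures the spatial decay whenever $|x|\ge 2$) with Lemma \ref{l2-3} (which supplies the linear-in-$t$ decay) via the Markov property, splitting the argument by the size of $|x|$ and by the two time ranges appearing in the statement. The case $|x|\le 2$ is essentially trivial: there $g(|x|)$ is bounded between $1$ and $g(2)$, so $t_0(|x|)$, $(1+|x|)\sqrt{g(|x|)}$, and (when $t\le C_0 t_0(|x|)$) also $tg(|x|)$ are all uniformly bounded. Thus each right-hand side of \eqref{r2-1-} is bounded below by a positive constant multiple of $e^{-C_{12}t}$ or by a positive constant, so the desired inequality follows either from $T_t^V 1(x)\le 1$ or from Lemma \ref{l2-3}, after enlarging $C_{11}$.

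For $|x|\ge 2$ and $0<t\le C_0 t_0(|x|)$, the inequality $t\le C_0(1+|x|)/\sqrt{g(|x|)}$ combined with $1+|x|\le (3/2)|x|$ yields
\[
\frac{|x|^2}{t} \;\ge\; \frac{4}{9C_0^2}\, t g(|x|),
\]
so the $\exp(-C_8 |x|^2/t)$ term in Lemma \ref{l2-2} is absorbed into a multiple of $\exp(-c\, tg(|x|))$, and the first line of \eqref{r2-1-} follows immediately from Lemma \ref{l2-2}.

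For $|x|\ge 2$ and $t\ge C_0 t_0(|x|)$, I would decompose $T_t^V$ using the Feynman-Kac representation \eqref{e1-4} and the Markov property: for any $t_1\in(0,t)$,
\[
T_t^V 1(x) \;=\; \Ee_x\!\left[\exp\!\left(-\int_0^{t_1} V(B_s)\,ds\right) T_{t-t_1}^V 1(B_{t_1})\right] \;\le\; C_9\, e^{-C_{10}(t-t_1)}\, T_{t_1}^V 1(x),
\]
where the inequality applies Lemma \ref{l2-3} pointwise. The crucial step is the choice $t_1:=(C_0/2)\,t_0(|x|)$, which simultaneously guarantees $t_1\le t/2$ (so $e^{-C_{10}(t-t_1)}\le e^{-C_{10} t/2}$) and
\[
t_1 g(|x|) \;=\; \tfrac{C_0}{2}(1+|x|)\sqrt{g(|x|)}, \qquad \frac{|x|^2}{t_1} \;\ge\; \frac{8}{9C_0}\,(1+|x|)\sqrt{g(|x|)}.
\]
Lemma \ref{l2-2} then yields $T_{t_1}^V 1(x)\le C'\exp(-c'(1+|x|)\sqrt{g(|x|)})$, and multiplying the two factors produces the second line of \eqref{r2-1-}.

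The main obstacle, which is really only bookkeeping, is engineering a single choice of $t_1$ that achieves both goals at once: (i) leaving a remainder $t-t_1$ comparable with $t$, so that Lemma \ref{l2-3} on the remaining interval contributes a factor comparable with $e^{-ct}$; and (ii) balancing the two competing exponents $t_1 g(|x|)$ and $|x|^2/t_1$ in Lemma \ref{l2-2} at the common scale $(1+|x|)\sqrt{g(|x|)}$. The value $t_0(|x|)=(1+|x|)/\sqrt{g(|x|)}$ is exactly the crossover time at which these two exponents coincide, which is precisely why it is the threshold separating the two regimes in the statement of the corollary.
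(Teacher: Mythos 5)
Your proof is correct and follows essentially the same route as the paper's: combine Lemma~\ref{l2-2} for the spatial decay with Lemma~\ref{l2-3} for the temporal decay, using the semigroup/Markov property and the threshold $t_0(|x|)$ as the crossover scale. The only (cosmetic) difference is that the paper first establishes the uniform intermediate bound \eqref{l4-1-2} by splitting $T_t^V 1(x)=T_{t/2}^V(T_{t/2}^V1)(x)$ for all $t$, whereas you apply Lemma~\ref{l2-2} directly for the small-time regime and defer the Markov split (with $t_1=C_0t_0(|x|)/2$, exactly as in the paper) to the large-time regime; the arithmetic identifying $|x|^2/t_1$ and $t_1g(|x|)$ at the common scale $(1+|x|)\sqrt{g(|x|)}$ matches the paper's.
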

\begin{proof}
When $|x|\le 2$,  $t_0(|x|)\le c_1$ and so we can deduce \eqref{r2-1-} directly from \eqref{l2-3-1}. So it suffices to prove \eqref{r2-1-} for
every $x\in \R^d$ with $|x|>2$.
According to \eqref{l2-2-1} and \eqref{l2-3-1}, we have
\begin{equation}\label{l4-1-2}
\begin{split}
T_t^V1(x)=&T_{{t}/{2}}^V\left(T_{{t}/{2}}^V 1\right)(x)\le \sup_{z\in \R^d}T_{{t}/{2}}^V1(z)\cdot
T_{{t}/{2}}^V1(x)\\
\le& c_1\left[\exp\left(-c_2(t+tg(|x|))\right)+\exp\left(-c_2\left(t +\frac{(1+|x|)^2}{t}\right)\right)\right]\\
\le&  c_3\left[\exp\left(-c_4tg(|x|)\right)+\exp\left(-c_2\left(t +\frac{(1+|x|)^2}{t}\right)\right)\right],\end{split}
\end{equation} where in the last inequality we used the fact that $g(r)\ge1$ for all $r\ge0$.
Furthermore, we note that for all $t\le C_0t_0(|x|)$, $tg(|x|)\le \frac{c_5(1+|x|)^2}{t}$.
Putting this fact into \eqref{l4-1-2}, we can get the desired assertion when $0<t\le C_0t_0(|x|)$. In particular, it
holds that
\begin{align*}
T^V_{\frac{C_0t_0(|x|)}{2}}(x)\le c_5\exp\left(-c_6(1+|x|)\sqrt{g(|x|)}\right).
\end{align*}
Combining this with \eqref{l2-3-1} yields that for every $t>C_0t_0(|x|)$,
\begin{align*}
T_t^V1(x)&=T_{\frac{C_0t_0(|x|)}{2}}^V\left(T_{t-\frac{C_0t_0(|x|)}{2}}^V 1\right)(x)\le \left(\sup_{z\in \R^d}T_{t-\frac{C_0t_0(|x|)}{2}}^V1(z)\right)\cdot
T_{\frac{C_0t_0(|x|)}{2}}^V1(x)\\
&\le c_7\exp\left(-c_8\left(t-\frac{C_0t_0(|x|)}{2}\right)\right)\exp\left(-c_8(1+|x|)\sqrt{g(|x|)}\right)\\
&\le c_9\exp\left(-c_{10}\left(t+(1+|x|)\sqrt{g(|x|)}\right)\right),
\end{align*}
where the last inequality we have used the fact that $t-\frac{C_0t_0(|x|)}{2}>\frac{t}{2}$ for all $t>C_0t_0(|x|)$. Therefore, the desired assertion follows.
\end{proof}

\section{Two-sided estimates when $0<t\le C_0t_0(|x|\wedge |y|)$ with any given $C_0>0$}\label{section3}
From this section, without loss of generality we always assume that $|x|\le |y|$. We start from the following on-diagonal estimates.
\begin{lemma}\label{l3-1}
For any given $C_0, C_0'>0$, there exist positive constants $C_i$, $1\le i\le 4$, such that for all $t>0$ and $x,y\in \R^d$ with $0<t\le C_0t_0(|x|)$ and $|x-y|\le C_0't^{{1}/{2}}$,
\begin{equation}\label{l3-1-1}
 C_1t^{-{d}/{2}}\exp\left(-C_2g(|x|)t\right)\le p(t,x,y)\le C_3t^{-{d}/{2}}\exp\left(-C_4g(|x|)t\right).
 \end{equation}
\end{lemma}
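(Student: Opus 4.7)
The plan is to treat the upper and lower bounds separately, both based on the preliminary results of Section 2.

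For the upper bound, I would exploit the semigroup property of $\{T_t^V\}_{t\ge 0}$ to write
\begin{equation*}
p(t,x,y)=\int_{\R^d}p(t/2,x,z)\,p(t/2,z,y)\,dz\le \Bigl(\sup_{z\in \R^d}p(t/2,z,y)\Bigr)\cdot T_{t/2}^V 1(x).
\end{equation*}
Since $p\le q$, the first factor is at most $c\,t^{-d/2}$ by \eqref{e1-6}, and since $t/2\le (C_0/2)t_0(|x|)\le C_0 t_0(|x|)$, Corollary \ref{E:cor} gives $T_{t/2}^V 1(x)\le C_{11}\exp(-C_{12}g(|x|)t/2)$. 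Multiplying the two bounds and relabelling constants yields the upper bound in \eqref{l3-1-1}.

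For the lower bound, I would confine the Brownian motion to a ball $U=B(x,r_x)$ on which $V$ is controlled by $g(|x|)$. Writing $p_U(t,x,y)$ for the Dirichlet heat kernel of $\mathcal L^V$ on $U$ and using the Feynman-Kac representation together with the nonnegativity of the exit contribution, one has
\begin{equation*}
p(t,x,y)\ge p_U(t,x,y)\ge \exp\Bigl(-t\sup_{z\in U}V(z)\Bigr)\,q_U(t,x,y).
\end{equation*}
If $r_x$ can be chosen so that (i) $\sup_{z\in U}V(z)\le c\,g(|x|)$, (ii) $y\in B(x,r_x/2)$, and (iii) $t\le r_x^2/2$, then scaling the lower estimate in \eqref{l2-1-2a} to the ball $B(x,r_x)$ (with $x$ at its centre) gives $q_U(t,x,y)\ge c_1 t^{-d/2}\exp(-c_2|x-y|^2/t)\ge c_3 t^{-d/2}$, since $|x-y|\le C_0' t^{1/2}$. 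Combining the displays yields the desired lower bound.

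The remaining task is to exhibit such an $r_x$. For $|x|$ large I take $r_x=|x|/4$: condition (i) follows from Assumption (H) together with the doubling property \eqref{e1-2a} since $|z|\asymp |x|$ on $U$; conditions (ii) and (iii) follow from $|x-y|\le C_0' t^{1/2}$ and $t\le C_0(1+|x|)/\sqrt{g(|x|)}\le C_0(1+|x|)$, both of which are negligible compared with $|x|$ and $|x|^2$ respectively once $|x|$ is sufficiently large. For $|x|$ in a bounded range, both $t$ and $|x-y|$ are bounded, $g(|x|)$ is pinched between two positive constants, and $V$ is bounded on any fixed compact set by the local-boundedness hypothesis, so a sufficiently large universal constant $r_x$ makes (i)--(iii) hold.

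The main obstacle is this geometric bookkeeping across the full range of $|x|$, in particular verifying (iii) uniformly. This is exactly where the restriction $t\le C_0 t_0(|x|)$ becomes essential: since $t_0(|x|)=(1+|x|)/\sqrt{g(|x|)}$ is much smaller than $|x|^2$ for large $|x|$ (because $g(|x|)\to\infty$), the relevant space-time box fits comfortably inside $U$ in the large-$|x|$ regime, while the bounded-$|x|$ regime reduces to a routine finite-volume Dirichlet heat-kernel estimate.
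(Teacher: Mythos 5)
Your proof is correct and follows the paper's strategy almost verbatim: the upper bound is the same chaining of the semigroup property with $p\le q$ and Corollary \ref{E:cor}, and the lower bound uses the killed Feynman--Kac bound on a ball around $x$ on which $V\lesssim g(|x|)$, reducing to the free Dirichlet heat kernel estimate \eqref{l2-1-2a}. The only deviation is the ball radius --- the paper takes $U=B(x,3C_0't^{1/2})$ with parabolic radius $\sim t^{1/2}$, which treats all $|x|$ uniformly (since $t^{1/2}\lesssim(1+|x|)^{1/2}$ under the standing hypothesis, so $\sup_U V\lesssim g(|x|)$ follows directly from doubling), whereas your spatially scaled ball $B(x,|x|/4)$ requires the separate large-/bounded-$|x|$ case split, but this is a cosmetic difference and your version is equally valid.
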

\begin{proof}
Suppose that $0<t\le C_0t_0(|x|)$ and $|x-y|\le C_0't^{1/2}$. By the semigroup property,
\begin{align*}
p(t,x,y)&=\int_{\R^d}p\left(t/2,x,z\right)p\left(t/2,z,y\right)dz\le c_1t^{-d/2}\int_{\R^d}p\left(t/2,x,z\right)dz\\
&=
c_1t^{-d/2}T_{t/2}^V 1(x)\le c_2t^{-d/2}\exp(-c_3tg(|x|)).
\end{align*}
Here in the first inequality we used the fact
\begin{align*}
p\left(t/2,z,y\right)\le q\left(t/2,z,y\right)\le c_1t^{-d/2},
\end{align*}
and the last inequality follows from \eqref{r2-1-} and the fact that $t/2\le C_0t_0(|x|)$.

On the other hand, set $U:=B(x,3C_0't^{1/2})$. Since $t_0(|x|)\le 1+|x|$,  $t^{1/2}\le C_0^{1/2}t_0(|x|)^{1/2}\le C_0^{1/2}(1+|x|)^{1/2}$. This along with
\eqref{e1-2a} and \eqref{e1-3} yields that
$\sup_{u\in U}V(u)\le c_4g(|x|)$.
Hence, for any $f\in C_b(\R^d)$ with ${\rm supp}[f]\subset B(x,2C_0't^{1/2})$, it holds that
\begin{align*}
T_t^Vf(x)&=\Ee_x\left[f(B_t)\exp\left(-\int_0^t V(B_s)\,ds\right)\right]
 \ge \Ee_x\left[f(B_t)\exp\left(-\int_0^t V(B_s)\,ds\right)\I_{\{t<\tau_U\}}\right] \\
&\ge \exp\left(-t\sup_{u\in U}V(u)\right)\Ee_x\left[f(B_t)\I_{\{t<\tau_U\}}\right]
 \ge \exp\left(-c_4tg(|x|)\right)\int_{B(x,2C_0't^{1/2})}q_U(t,x,z)f(z)\,dz\\
&\ge c_5t^{-d/2}\exp\left(-c_4tg(|x|)\right)\int_{B(x,2C_0't^{1/2})}f(z)\,dz,
\end{align*}
where the third inequality is due to ${\rm supp}[f]\subset B(x,2C_0't^{1/2})$, and in the last inequality we used the lower bounds \eqref{l2-1-2a} and \eqref{l2-1-3} as well as the scaling property of Dirichlet heat kernel $q_U(t,x,z)$.
Therefore, by using the inequality above we can deduce the lower bound of $p(t,x,y)$ in \eqref{l3-1-1}.
\end{proof}

\begin{lemma}\label{l3-2}
For any positive constants $C_0, C_0'$, there exist positive constants $C_{5}$ and $C_{6}$ such that for all $t>0$ and $x,y\in \R^d$ with  $0<t\le C_0t_0(|x|)$ and $|x-y|> 2C_0't^{1/2}$,
\begin{equation}\label{l3-2-1}
 p(t,x,y)\ge C_{5}t^{-d/2}\exp\left(-C_{6}\left(g(|x|)t+|x-y|\sqrt{g(|y|)}+\frac{|x-y|^2}{t}\right)\right).
\end{equation}
\end{lemma}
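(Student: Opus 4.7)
The plan is to combine a semigroup two-phase decomposition with a chain-of-balls lower bound. Write
\begin{equation*}
p(t,x,y) \ge \int_{B(x,r_0)} p(t-s,x,z)\, p(s,z,y)\, dz
\end{equation*}
for a parameter $s \in (0,t)$ to be chosen and $r_0 \asymp (t-s)^{1/2}$. The waiting-near-$x$ factor is in the on-diagonal regime and is controlled directly by Lemma \ref{l3-1}, which applies because $t-s \le t \le C_0 t_0(|x|)$: uniformly in $z \in B(x,r_0)$,
\begin{equation*}
p(t-s,x,z) \ge c(t-s)^{-d/2}\exp\!\bigl(-c g(|x|)(t-s)\bigr).
\end{equation*}

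The heart of the proof is a matching off-diagonal lower bound for $p(s,z,y)$ with $z$ near $x$. I would produce it by a chain-of-balls argument along a path from $z$ to $y$ staying in the region $\{|\cdot|\le|y|\}$: partition the path into $N \asymp |z-y|^2/s$ segments of length $\asymp (s/N)^{1/2}$, iterate the semigroup identity $N$ times, and apply Lemma \ref{l3-1} at each small step of duration $s/N$. Along such a path, $g(|z_{i-1}|)\le c g(|y|)$ by \eqref{e1-2a} together with $|z_{i-1}|\le |y|$, so each step is at least $c(s/N)^{-d/2}\exp(-c g(|y|) s/N)$. The product of these $N$ factors combined with the $(N-1)$ ball-volume factors $\asymp (s/N)^{d/2}$ telescopes to $(s/N)^{-d/2}\exp(-c g(|y|) s)$; the combinatorial constant $c^N$ with $N\asymp|z-y|^2/s$ contributes a factor $\exp(-c'|z-y|^2/s)$ that enlarges the Gaussian constant; and $(s/N)^{-d/2}\asymp (|z-y|/s)^d$ is reduced to $s^{-d/2}$ after a further adjustment of the Gaussian constant, using $|z-y|^2/s\ge|x-y|^2/(4t)\ge C_0'^2$. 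The outcome is
\begin{equation*}
p(s,z,y) \ge c s^{-d/2}\exp\!\bigl(-c g(|y|) s - c|z-y|^2/s\bigr),
\end{equation*}
valid for every $z\in B(x,r_0)$ (so that $|z-y|\asymp|x-y|$).

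Combining the two pieces with $|B(x,r_0)|\asymp (t-s)^{d/2}$ yields
\begin{equation*}
p(t,x,y) \ge c s^{-d/2}\exp\!\bigl(-c g(|x|)(t-s) - c g(|y|) s - c|x-y|^2/s\bigr).
\end{equation*}
I would then optimize by choosing $s = \min\{t/2,\,|x-y|/\sqrt{g(|y|)}\}$. When $|x-y|/\sqrt{g(|y|)}\le t/2$, the balance gives $g(|y|) s + |x-y|^2/s = 2|x-y|\sqrt{g(|y|)}$ and $t-s\ge t/2$, producing $p(t,x,y)\ge ct^{-d/2}\exp(-c(g(|x|)t + |x-y|\sqrt{g(|y|)}))$; in this regime one further checks $|x-y|^2/t\le|x-y|\sqrt{g(|y|)}$, so the $|x-y|^2/t$ term in the target \eqref{l3-2-1} can be absorbed into the exponent at the cost of a larger constant. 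In the opposite regime $|x-y|/\sqrt{g(|y|)}>t/2$, a direct comparison shows $|x-y|^2/t$ dominates both $g(|x|)t$ and $|x-y|\sqrt{g(|y|)}$ up to absolute constants, and $s=t/2$ already suffices. In either case $s\le t$, so $s^{-d/2}\ge t^{-d/2}$ in the lower bound, giving \eqref{l3-2-1}.

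The main obstacle is the chain-of-balls step: one must verify the hypothesis $s/N\le C_0 t_0(|z_{i-1}|)$ of Lemma \ref{l3-1} at every intermediate point. This is delicate because $t_0$ need not be monotone, so the assumption $t\le C_0 t_0(|x|)$ does not propagate freely along the path. The fix is to choose the path so that $|z_{i-1}|$ stays in a controlled range relative to $|x|$ and $|y|$ (staying away from the origin but below $|y|$), exploiting the reverse-doubling estimate for $t_0$ that follows from \eqref{e1-2a}. Absorbing the combinatorial constant $c^N$ into the Gaussian, converting $(|z-y|/s)^d$ into $s^{-d/2}$, and handling small values of $|x|$ or $|y|$ separately are then routine bookkeeping.
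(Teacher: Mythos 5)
Your proposal is correct in outline, but it takes a genuinely different route from the paper. The paper starts from $y$ and makes a \emph{single} large excursion: it sets $U=B(y,|x-y|-C_0't^{1/2})$, $W=B(x,3C_0't^{1/2})$, applies the exact exit representation \eqref{l2-4-2} and the explicit exit density \eqref{l2-5-1}, bounds the normal derivative via \eqref{l2-1-1} and the Feynman--Kac factor by $\exp(-\tau_U\,\sup_U V)$ with $\sup_U V\lesssim g(|y|)$, and then feeds in the on-diagonal bound of Lemma \ref{l3-1} for $p(t-\tau_U,B_{\tau_U},x)$; the resulting $s$-integral is then minimized in closed form, exactly the optimization you perform at the end. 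Your version replaces the one-shot exit-density step by an $N$-fold semigroup iteration along a chain of balls between $z\in B(x,r_0)$ and $y$, using only Lemma \ref{l3-1}. This is more elementary (no boundary-derivative estimates needed), at the price of more bookkeeping. Two remarks on your bookkeeping. First, the ``delicate'' time-constraint verification you flag is actually easier than you suggest and needs no monotonicity or reverse-doubling of $t_0$: with your choice of $s$ one always has $s/N\asymp s^2/|z-y|^2\lesssim 1/g(|y|)$, while every intermediate point $w$ of a straight segment from $z$ to $y$ (together with the tube of radius $\lesssim(s/N)^{1/2}\le 1$ around it) lies in $B(0,c(1+|y|))$, so by the doubling of $g$ and $g\ge1$ one gets $t_0(|w|)\ge(1+|w|)/\sqrt{g(|w|)}\gtrsim 1/\sqrt{g(|y|)}\ge 1/g(|y|)\gtrsim s/N$ uniformly; there is no need to ``stay away from the origin.'' Second, the factor $(s/N)^{-d/2}\asymp(|z-y|/s)^d$ is in fact \emph{at least} $c\,s^{-d/2}$ because $|z-y|\gtrsim s^{1/2}$, so no Gaussian-constant adjustment is needed there, only in absorbing the $c^N$ prefactor into $\exp(-c'|z-y|^2/s)$. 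With these two simplifications your sketch closes and gives the same estimate \eqref{l3-2-1}; the two proofs agree in the final optimization over $s$ and in the observation that the $|x-y|^2/t$ term is dominated by $g(|x|)t+|x-y|\sqrt{g(|y|)}$ in the regime $t\le |x-y|/\sqrt{g(|y|)}$ and redundant otherwise.
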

\begin{proof}
Suppose that $0<t\le C_0t_0(|x|)$ and $|x-y|> 2C_0't^{1/2}$. Define
$U=B(y,|x-y|-C_0't^{1/2})$ and $W=B(x,3C_0't^{1/2})$. One can verify that
\begin{equation}\label{l3-2-2}
|\partial U \cap W|\ge
c_1t^{\frac{d-1}{2}}.
\end{equation}
By \eqref{l2-4-2},
we have
\begin{equation}\label{l3-2-3}
\begin{split}
p(t,x,y)
&=\Ee_y\left[\exp\left(-\int_0^{\tau_U}V(B_s)\,ds\right)p\left(t-\tau_U,B_{\tau_U},x\right)\I_{\{\tau_U\le t\}}\right]\\
&\ge \Ee_y\left[\exp\left(-\tau_U \cdot \sup_{u\in U}V(u)\right)p\left(t-\tau_U,B_{\tau_U},x\right)
\I_{\{\tau_U\le t,B_{\tau_U}\in W\}}\right]\\
&=\frac 1{2}\int_0^t \exp\left(-s\sup_{u\in U}V(u)\right)\left(\int_{\partial U \cap W}p(t-s,z,x)\frac{\partial q_U(s,y,\cdot)}{\partial n}(z)\,\sigma(dz)\right)\,ds\\
&\ge \frac{c_2|x-y|}{t^{1/2}}\int_0^t e^{-c_3sg(|y|)}(t-s)^{-d/2}e^{-c_3(t-s)g(|x|)}\left(\frac{t}{s}\right)^{d/2}e^{-\frac{c_3|x-y|^2}{s}}s^{-1}\,ds\\
&\ge c_4 t^{-d/2}e^{-c_5tg(|x|)}\int_0^{t/2}\exp\left(-c_5\left(sg(|y|)+\frac{|x-y|^2}{s}\right)\right)s^{-1}\,ds.
\end{split}
\end{equation}
Here the second equality is due to \eqref{l2-5-1}, in the second inequality above we have used \eqref{l2-1-1}
(note that here $|x-y|>2C_0't^{1/2}$) and \eqref{l3-2-2}, as well as the facts that
\begin{align*}
&\sup_{u\in U}V(u)\le \sup_{u:|u-y|\le |x-y|}V(u)\le \sup_{u:|u|\le 2|y|+|x|\le 3|y|}V(u)\le c_6g(|y|),\\
&\sup_{z\in W}p(t-s,z,x)\ge c_7(t-s)^{-d/2}e^{-c_8(t-s)g(|x|)},
\end{align*}
which can be deduced from \eqref{e1-2a}, \eqref{e1-3} and \eqref{l3-1-1} respectively, and the last inequality follows from the
fact that $\frac{|x-y|}{t^{1/2}}\ge c_9$.

Furthermore, we can deduce that
\begin{equation}\label{l3-2-4}
\begin{split}
&\int_0^{t/2}\exp\left(-c_5\left(sg(|y|)+\frac{|x-y|^2}{s}\right)\right)s^{-1}\,ds\\
&\ge
\begin{cases}
\displaystyle\int_{{t}/{4}}^{t/2}\exp\left(-c_5\left(sg(|y|)+\frac{|x-y|^2}{s}\right)\right)s^{-1}\,ds,\ & t\le \frac{4|x-y|}{\sqrt{g(|y|)}},\\
\displaystyle\int_{\frac{|x-y|}{\sqrt{g(|y|)}}}^{\frac{2|x-y|}{\sqrt{g(|y|)}}}\exp\left(-c_5\left(sg(|y|)+\frac{|x-y|^2}{s}\right)\right)s^{-1}\,ds,\ & t> \frac{4|x-y|}{\sqrt{g(|y|)}},\\
\end{cases}
\\
&\ge
\begin{cases}
c_{10}\exp\left(-c_{11}\left(tg(|y|)+\frac{|x-y|^2}{t}\right)\right),\ & t\le \frac{4|x-y|}{\sqrt{g(|y|)}},\\
c_{10}\exp\left(-c_{11}|x-y|\sqrt{g(|y|)}\right),\ & t> \frac{4|x-y|}{\sqrt{g(|y|)}},\\
\end{cases}
\\
&\ge
\begin{cases}
c_{12}\exp\left(-\frac{c_{13}|x-y|^2}{t}\right),\ & t\le \frac{4|x-y|}{\sqrt{g(|y|)}},\\
c_{12}\exp\left(-c_{13}|x-y|\sqrt{g(|y|)}\right),\ & t> \frac{4|x-y|}{\sqrt{g(|y|)}},\\
\end{cases}
\end{split}
\end{equation}
where the last inequality follows from the fact that
\begin{align*}
& tg(|y|)\le \frac{16|x-y|^2}{t}\quad \ {\rm for}\   t\le \frac{4|x-y|}{\sqrt{g(|y|)}}.
\end{align*}
Putting \eqref{l3-2-4} into \eqref{l3-2-3}, we obtain
\begin{align*}
p(t,x,y)&\ge
\begin{cases}
c_{14}t^{-d/2} \exp\left(-c_{15}\left(tg(|x|)+\frac{|x-y|^2}{t}\right)\right), & t\le \frac{4|x-y|}{\sqrt{g(|y|)}},\\
c_{14}t^{-d/2} \exp\left(-c_{15}\left(tg(|x|)+|x-y|\sqrt{g(|y|)}\right)\right),\ & t> \frac{4|x-y|}{\sqrt{g(|y|)}},\\
\end{cases}
\\
&\ge c_{16}t^{-d/2}\exp\left(-c_{17}\left(tg(|x|)+|x-y|\sqrt{g(|y|)}+\frac{|x-y|^2}{t}\right)\right).
\end{align*}
By now we finish the proof for \eqref{l3-2-1}.
\end{proof}

\begin{lemma}\label{l3-3}
For any positive constants $C_0, C_0'$, there exist positive constants $C_{7}$ and $C_{8}$ such that for all $t>0$ and $x,y\in \R^d$ with  $0<t\le C_0t_0(|x|)$ and $|x-y|> 2C_0't^{1/2}$,
\begin{equation}\label{l3-3-1}
 p(t,x,y)\le C_{7}t^{-d/2}\exp\left(-C_{8}\left(g(|x|)t+|x-y|\sqrt{g(|y|)}+\frac{|x-y|^2}{t}\right)\right).
\end{equation}
\end{lemma}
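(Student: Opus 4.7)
The plan is to combine the off-diagonal upper bound of Lemma~\ref{l2-4} with an independent complementary bound capturing the potential-decay factor $\exp(-cg(|x|)t)$, and then take a geometric mean of the two estimates. Under the hypotheses ($|x|\le|y|$ and $|x-y|>2C_0't^{1/2}$), Lemma~\ref{l2-4} (with its free constant chosen as $C_0'$) immediately yields
$$p(t,x,y)\le Ct^{-d/2}\exp\!\left(-c\!\left(\frac{|x-y|^2}{t}+|x-y|\sqrt{g(|y|)}\right)\right),$$
so the only missing factor is $\exp(-cg(|x|)t)$.

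To produce it, I would establish the complementary estimate $p(t,x,y)\le Ct^{-d/2}\exp(-c(g(|x|)t+|x-y|^2/t))$ via Cauchy--Schwarz. By the semigroup identity $p(t,x,y)=\int p(t/2,x,z)p(t/2,z,y)\,dz$, the trivial domination $p(t/2,z,y)\le q(t/2,z,y)\le Ct^{-d/2}e^{-c|z-y|^2/t}$, and the Feynman--Kac representation~\eqref{e1-4}, one has
$$p(t,x,y)\le Ct^{-d/2}\,\Ee_x\!\left[\exp\!\left(-\int_0^{t/2}V(B_s)\,ds\right)e^{-c|B_{t/2}-y|^2/t}\right].$$
Cauchy--Schwarz then bounds the expectation by the product $\sqrt{\Ee_x[e^{-2\int_0^{t/2}V(B_s)\,ds}]}\cdot\sqrt{\Ee_x[e^{-2c|B_{t/2}-y|^2/t}]}$. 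Since $2V\ge V$, the first radical is at most $\sqrt{T^V_{t/2}1(x)}$, and Corollary~\ref{E:cor} applied at time $t/2\le (C_0/2)t_0(|x|)$ gives $\sqrt{T^V_{t/2}1(x)}\le Ce^{-c'g(|x|)t}$. The second radical is an explicit Gaussian integral: completing the square one computes $\Ee_x[e^{-2c|B_{t/2}-y|^2/t}]=(1+2c)^{-d/2}\exp\!\bigl(-\tfrac{2c}{1+2c}\cdot\tfrac{|x-y|^2}{t}\bigr)$, whose square root is dominated by $Ce^{-c''|x-y|^2/t}$. Multiplying the two radicals yields the complementary estimate.

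Finally, taking the geometric mean of the two bounds gives
$$p(t,x,y)\le Ct^{-d/2}\exp\!\left(-c_\star\!\left(g(|x|)t+|x-y|\sqrt{g(|y|)}+\frac{|x-y|^2}{t}\right)\right),$$
which is the desired estimate with appropriate constants $C_7,C_8$. I expect no conceptual obstacle; the delicate point is that Cauchy--Schwarz must simultaneously retain both the potential-decay factor coming from $T^V_{t/2}1(x)$ and the Gaussian decay from the free Brownian term, and both ingredients are indeed preserved because doubling the potential fits the same hypotheses and because the Brownian Gaussian integral admits an exact evaluation. A direct attempt via Lemma~\ref{l2-6} with a domain centred at $x$ would instead require a delicate case split on whether $|x|$ is comparable to $|x-y|$, which the present argument conveniently sidesteps.
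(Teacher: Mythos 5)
Your proof is correct, and it takes a genuinely different route from the paper. The paper's argument starts from the semigroup identity $p(t,x,y)=\int p(t/2,x,z)p(t/2,z,y)\,dz$, splits the integral according to whether $|z-y|\le|x-y|/2$ or not, and on top of that performs a case analysis on whether $|x-y|\le|y|/4$ or $|x-y|>|y|/4$; in each regime it applies Lemma~\ref{l2-4} to one factor and Corollary~\ref{E:cor} to the mass of the other, keeping track of the geometry so that $g(|z|)\gtrsim g(|y|)$ on the relevant set. Your argument also uses the same two ingredients (Lemma~\ref{l2-4} and Corollary~\ref{E:cor}) but packages them differently: you derive the complementary bound $p(t,x,y)\lesssim t^{-d/2}e^{-c(g(|x|)t+|x-y|^2/t)}$ by writing $p(t,x,y)=T^V_{t/2}[p(t/2,\cdot,y)](x)$, dominating $p(t/2,\cdot,y)$ by the free Gaussian, and applying Cauchy--Schwarz to peel the Feynman--Kac weight off the Gaussian factor; the exact Gaussian moment calculation you cite is correct ($\Ee_x[e^{-\lambda|B_s-y|^2}]=(1+2\lambda s)^{-d/2}\exp(-\lambda|x-y|^2/(1+2\lambda s))$, with $\lambda=2c/t$, $s=t/2$). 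Interpolating geometrically between this and the bound from Lemma~\ref{l2-4} then gives all three exponents at once. What this buys you is that the case split on $|x-y|$ versus $|y|$ disappears entirely, and the $t^{-d/2}$ prefactor is tracked transparently; the trade-off is that the constants $C_8$ you obtain are degraded by the square roots and the interpolation, whereas the paper's direct computation keeps somewhat sharper (though still non-explicit) constants. Both approaches are valid; yours is the cleaner one to write down.
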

\begin{proof}
Suppose that $0<t\le C_0t_0(|x|)$ and $|x-y|> 2C_0't^{1/2}$. The proof is split into two cases.

\indent{\bf Case 1:} $|x-y|\le {|y|}/{4}$. By the semigroup property we have
\begin{align*}
  p(t,x,y)
&=\int_{\{z:|z-y|\le {|x-y|}/{2}\}}p\left(t/2,x,z\right)p\left(t/2,z,y\right)dz\\
&\quad +
\int_{\{z:|z-y|> {|x-y|}/{2}\}}p\left(t/2,x,z\right)p\left(t/2,z,y\right)dz\\
&=:I_1+I_2.
\end{align*}

When $|z-y|\le {|x-y|}/{2}$, it holds that
\begin{equation*}
|z-x|\ge |x-y|-|z-y|\ge \frac{|x-y|}{2}\ge C_0't^{1/2}.
\end{equation*}
Then, by \eqref{l2-4-1},
\begin{align*}
p\left(t/2,x,z\right)&\le c_1\exp\left(-c_2\left(|x-z|\sqrt{\max\{g(|x|),g(|z|)\}}+\frac{|x-z|^2}{t}\right)\right)\\
&\le c_3\exp\left(-c_4\left(|x-y|\sqrt{g(|y|)}+\frac{|x-y|^2}{t}\right)\right).
\end{align*}
Here in the last inequality follows from
\begin{align*}
g(|z|)\ge g\left(|y|-|z-y|\right)\ge g\left(|y|-\frac{|x-y|}{2}\right)\ge g\left(\frac{7|y|}{8}\right)\ge c_5g(|y|),
\end{align*}
where we have  used \eqref{e1-2a} and the condition $|x-y|\le {|y|}/{4}$.
Hence, we obtain
\begin{equation}\label{l3-3-2a}
\begin{split}
I_1&\le \sup_{z\in \R^d:|z-y|\le {|x-y|}/{2}}p\left(t/2,x,z\right)\cdot \int_{\R^d}p\left(t/2,z,y\right)\,dz\\
&\le c_3\exp\left(-c_4\left(|x-y|\sqrt{g(|y|)}+\frac{|x-y|^2}{t}\right)\right)\cdot T_{t/2}^V 1(y)\\
&\le c_6\exp\left(-c_7\left(|x-y|\sqrt{g(|y|)}+\frac{|x-y|^2}{t}+\min\left\{g(|y|)t,
(t+(1+|y|))\sqrt{g(|y|)}\right\}\right)\right)\\
&\le c_6\exp\left(-c_7\left(|x-y|\sqrt{g(|y|)}+\frac{|x-y|^2}{t}+\min\left\{g(|x|)t,
(t+(1+|x|))\sqrt{g(|x|)}\right\}\right)\right)\\
&\le c_8\exp\left(-c_9\left(|x-y|\sqrt{g(|y|)}+\frac{|x-y|^2}{t}+g(|x|)t\right)\right),
\end{split}
\end{equation}
where the third inequality follows from \eqref{r2-1-}, the fourth inequality is due to the fact $|x|\le |y|$, and the last
one follows from $0<t\le C_0t_0(|x|)$.

Similarly, when $|z-y|>{|x-y|}/{2}>C_0't^{1/2}$, we get from  \eqref{l2-4-1} that
\begin{align*}
p\left(t/2, y,z\right)& \le c_{10}\exp\left(-c_{11}\left(|y-z|\sqrt{\max\{g(|y|),g(|z|)\}}+\frac{|y-z|^2}{t}\right)\right)\\
&\le  c_{12}\exp\left(-c_{13}\left(|x-y|\sqrt{g(|y|)}+\frac{|x-y|^2}{t}\right)\right).
\end{align*}
Therefore, using \eqref{r2-1-} again and following the same arguments for the estimate of $I_1$, we have
\begin{align*}
I_2& \le \sup_{z\in \R^d:|z-y|\le {|x-y|}/{2}}p\left(t/2,y,z\right)\cdot \int_{\R^d}p\left(t/2,x,z\right)\,dz\\
&\le c_{14}\exp\left(-c_{15}\left(|x-y|\sqrt{g(|y|)}+\frac{|x-y|^2}{t}+g(|x|)t\right)\right).
\end{align*}

Combining with both estimates for $I_1$ and $I_2$, we can prove \eqref{l3-3-1} under the condition $|x-y|\le {|y|}/{4}$.

{\bf Case 2:} $|x-y|> {|y|}/{4}$. Since $|x|\le |y|$, it holds that
\begin{equation}\label{l3-3-2}
{|y|}/{4}\le |x-y|\le |x|+|y|\le 2|y|.
\end{equation}
Now we set
\begin{align*}
p(t,x,y)&=\int_{\{z:|z-y|\le {|y|}/{16}\}}p\left(t/2,x,z\right)p\left(t/2,z,y\right)\,dz\\
&\quad +
\int_{\{z:|z-y|> {|y|}/{16}\}}p\left(t/2,x,z\right)p\left(t/2,z,y\right)\,dz\\
&=:J_1+J_2.
\end{align*}

When $|z-y|\le {|y|}/{16}$, thanks to \eqref{l3-3-2}, it holds that
\begin{align*}
&|z-x|\ge |x-y|-|z-y|\ge |x-y|-{|y|}/{16}\ge |x-y|-{|x-y|}/{4}\ge \frac{3C_0't^{1/2}}{4},\\
&|z|\ge |y|-|z-y|\ge |y|-{|y|}/{16}\ge {|y|}/{2}.
\end{align*}
According to this and \eqref{l2-4-1}, we derive that
\begin{align*}
p\left(t/2,x,z\right)&\le c_{16}\exp\left(-c_{17}\left(|x-z|\sqrt{\max\{g(|x|),g(|z|)\}}+\frac{|x-z|^2}{t}\right)\right)\\
&\le c_{18}\exp\left(-c_{19}\left(|x-y|\sqrt{g(|y|)}+\frac{|x-y|^2}{t}\right)\right).
\end{align*}
This further along with the arguments for \eqref{l3-3-2a} yields that
\begin{align*}
J_1\le c_{20}\exp\left(-c_{21}\left(|x-y|\sqrt{g(|y|)}+\frac{|x-y|^2}{t}+g(|x|)t\right)\right).
\end{align*}

On the other hand, when $|z-y|\ge {|y|}/{16}$, by \eqref{l3-3-2} it holds that
\begin{equation*}
|z-y|\ge \frac{|y|}{16}\ge \frac{|x-y|}{32}\ge \frac{C_0't^{1/2}}{16}.
\end{equation*}
So, applying \eqref{l2-4-1} again, we find that
\begin{align*}
p\left(t/2,z,y\right)&\le c_{22}\exp\left(-c_{23}\left(|z-y|\sqrt{\max\{g(|y|),g(|z|)\}}+\frac{|z-y|^2}{t}\right)\right)\\
&\le c_{24}\exp\left(-c_{25}\left(|x-y|\sqrt{g(|y|)}+\frac{|x-y|^2}{t}\right)\right).
\end{align*}
Using this and following the same arguments as these for $I_2$, we arrive at
\begin{equation*}
J_2\le c_{26}\exp\left(-c_{27}\left(|x-y|\sqrt{g(|y|)}+\frac{|x-y|^2}{t}+g(|x|)t\right)\right).
\end{equation*}

Putting the estimates for $J_1$ and $J_2$ together, we can prove the desired conclusion \eqref{l3-3-1} when $|x-y|> {|y|}/{4}$.
So, we complete the proof.
\end{proof}

Summarising results from Lemmas \ref{l3-1}--\ref{l3-3},  we can obtain  the following two-sided estimates for
$p(t,x,y)$ when $0<t\le C_0t_0(|x|)$.
\begin{proposition}\label{p3-1}
For any $C_0>0$, there exist positive constants $C_{i}$, $9\le i\le 12$, such that
for all $x,y\in \R^d$ with $|x|\le |y|$ and all $0<t\le C_0t_0(|x|)$,
\begin{equation}\label{p3-1-1}
\begin{split}
&C_{9}t^{-d/2}\exp\left(-C_{10}\left(tg(|x|)+|x-y|\sqrt{g(|y|)}+\frac{|x-y|^2}{t}\right)\right)\\
&\le p(t,x,y)\\
&\le C_{11}t^{-d/2}\exp\left(-C_{12}\left(tg(|x|)+|x-y|\sqrt{g(|y|)}+\frac{|x-y|^2}{t}\right)\right).
\end{split}
\end{equation}
\end{proposition}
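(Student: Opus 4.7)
The plan is to combine Lemmas \ref{l3-1}, \ref{l3-2} and \ref{l3-3}, splitting according to whether $|x-y|$ is smaller or larger than a fixed multiple of $t^{1/2}$. Fix any $C_0' > 0$ (say $C_0' = 1$). In the off-diagonal regime $|x-y| > 2C_0' t^{1/2}$, Lemmas \ref{l3-2} and \ref{l3-3} already yield, respectively, the matching lower and upper bounds of exactly the form asserted in \eqref{p3-1-1}, with the same exponent
$$E(t,x,y) := tg(|x|) + |x-y|\sqrt{g(|y|)} + \frac{|x-y|^2}{t}.$$
So the only remaining task is to reconcile the on-diagonal estimate of Lemma \ref{l3-1} with the form in \eqref{p3-1-1} when $|x-y| \le 2C_0' t^{1/2}$.

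In this on-diagonal regime, the key observation will be that $E(t,x,y) \asymp tg(|x|)$ modulo bounded additive terms. Since $t \le C_0 t_0(|x|) = C_0(1+|x|)/\sqrt{g(|x|)}$ and $g \ge 1$, we get $t^{1/2} \le C_0^{1/2}(1+|x|)^{1/2}$, hence
$$|y| \le |x| + |x-y| \le |x| + 2C_0'C_0^{1/2}(1+|x|)^{1/2} \le c_1(1+|x|).$$
The doubling condition \eqref{e1-2a} then yields $g(|y|) \le c_2 g(|x|)$. Consequently $|x-y|^2/t \le 4(C_0')^2$ is bounded by a constant, and AM--GM gives
$$|x-y|\sqrt{g(|y|)} \le \frac{|x-y|^2}{2t} + \frac{t\,g(|y|)}{2} \le 2(C_0')^2 + \frac{c_2}{2}\,tg(|x|).$$
Combining these, one obtains $tg(|x|) \le E(t,x,y) \le c_3\, tg(|x|) + c_4$ throughout this regime.

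With this comparison in hand, the upper bound in \eqref{p3-1-1} follows from the upper estimate of Lemma \ref{l3-1} by choosing $C_{12}$ small enough that $C_{12}c_3 \le C_4$ and absorbing the factor $e^{C_{12}c_4}$ into the prefactor; the matching lower bound follows symmetrically from the lower estimate in Lemma \ref{l3-1} by taking $C_{10} \ge C_2$ and using $E(t,x,y) \ge tg(|x|)$. The proof is essentially a matter of bookkeeping, and the only mild obstacle is verifying that the constants can be selected uniformly so that the on-diagonal and off-diagonal estimates agree at the transitional scale $|x-y| \asymp t^{1/2}$; this uses precisely the doubling property of $g$ and the constraint $t \le C_0 t_0(|x|)$ to control $g(|y|)$ by $g(|x|)$.
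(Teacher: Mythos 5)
Your proposal is correct and follows essentially the same route as the paper: both prove the proposition by combining Lemmas \ref{l3-1}--\ref{l3-3}, noting that the off-diagonal lemmas already give the stated form, and then showing that in the on-diagonal regime $|x-y|\lesssim t^{1/2}$ the exponent $tg(|x|)+|x-y|\sqrt{g(|y|)}+|x-y|^2/t$ is comparable, up to a bounded additive term, to $tg(|x|)$, using the doubling property of $g$ and the constraint $t\le C_0 t_0(|x|)$ to control $g(|y|)$ by $g(|x|)$ (the paper phrases this via Young's inequality on $t^{1/2}\sqrt{g(|x|)}$, you via AM--GM on $|x-y|\sqrt{g(|y|)}$, which is the same computation).
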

\begin{proof}
According to \eqref{l3-1-1}, \eqref{l3-2-1} and \eqref{l3-3-1}, it suffices to prove that
for any given $c_1>1$, $x,y\in \R^d$ and $0\le t\le C_0t_0(|x|)$ with $|x-y|\le c_1t^{1/2}$, it holds that
\begin{equation}\label{p3-1-2}
|x-y|\sqrt{g(|y|)}+\frac{|x-y|^2}{t}\le c_2\left(1+tg(|x|)\right).
\end{equation}
Since $|x-y|\le c_1t^{1/2}$, $\frac{|x-y|^2}{t}\le c_3$.
Moreover, according to Young's inequality,
\begin{align*}
|x-y|\sqrt{g(|y|)}&\le c_4t^{1/2}\sqrt{g(|x|)}\le \frac{c_4}{2}\left(1+tg(|x|)\right).
\end{align*}
Here the first inequality follows from
\begin{align*}
g(|y|)&\le g\left(|x|+|x-y|\right)\le g\left(|x|+c_1t^{1/2}\right)\le
g\left(|x|+c_5\sqrt{t_0(|x|)}\right)\\
&\le g(|x|+c_5(1+|x|)^{1/2})\le c_6g(|x|),
\end{align*}
where the third inequality is due to the fact $0<t\le C_0t_0(|x|)$, in the fourth inequality we used  $t_0(|x|)\le 1+|x|$,
and in the last inequality we have used \eqref{e1-2a}.

Combining with all the estimates above together yields \eqref{p3-1-2}.
Therefore, the proof is complete.
\end{proof}

\section{Two-sided bounds when $t>C_0t_0(|x|\wedge |y|)$ with any $C_0>0$}\label{section4}
Recall that $t_0(s)=\frac{1+s}{\sqrt{g(s)}}$ for every $s\ge0$. We say $t_0:[0,\infty)\to \R_+$ is almost increasing (resp.\ decreasing), if
there exists an increasing (resp.\ a decreasing) function $h:[0,+\infty) \to [1,+\infty)$ such that for some
positive constants $C_*,C^*$,
\begin{equation}\label{e4-1}
C_*h(s)\le t_0(s)\le C^*h(s), \quad s\ge0.
\end{equation}
In this section we will present two-sided estimates for $p(t,x,y)$ when $t>C_0t_0(|x|\wedge |y|)$ with any $C_0>0$, according to
different (almost monotone) properties for $t_0(\cdot)$. Again we will assume that $|x|\le |y|$.

\subsection{$t_0(\cdot)$ is almost decreasing}

\begin{lemma}\label{l4-1}
Assume that $t_0(\cdot)$ is almost decreasing. For any $C_0>0$, there exist positive constants $C_{1}, C_{2}$ such that for all $x,y\in \R^d$ and $t>C_0t_0(|x|)$,
\begin{equation}\label{l4-1-1}
p(t,x,y)\le C_{1}e^{-C_{2}t}\exp\left(-C_{2}(1+|y|)\sqrt{g(|y|)}\right).
\end{equation}
\end{lemma}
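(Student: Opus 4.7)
The natural approach is to apply the semigroup property and reduce to Corollary \ref{E:cor} at the point $y$. I would write
\[ p(t, x, y) = \int_{\R^d} p(t/2, x, z)\, p(t/2, z, y) \, dz \le \Bigl(\sup_{z\in \R^d} p(t/2, x, z)\Bigr) \int_{\R^d} p(t/2, y, z) \, dz, \]
where I used the symmetry $p(s, z, y) = p(s, y, z)$. The pointwise bound $p \le q$ from the introduction gives $\sup_z p(t/2, x, z) \le c\, t^{-d/2}$, while the remaining integral equals $T_{t/2}^V 1(y)$. So the task reduces to controlling $T_{t/2}^V 1(y)$ by the large-time branch of \eqref{r2-1-} and then absorbing the prefactor $t^{-d/2}$.

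To apply the second line of \eqref{r2-1-} at $y$, I need $t/2 \ge C_0'' t_0(|y|)$ for some $C_0''>0$. This is exactly where the almost-decreasing hypothesis \eqref{e4-1} enters: combined with $|x|\le|y|$ it gives $t_0(|y|) \le (C^*/C_*) t_0(|x|)$, so the hypothesis $t > C_0 t_0(|x|)$ yields $t/2 \ge C_0'' t_0(|y|)$ with $C_0'' = C_0 C_*/(2C^*)$. Corollary \ref{E:cor} then produces constants $C, C'>0$ such that
\[ T_{t/2}^V 1(y) \le C \exp\bigl(-C'\bigl(t/2 + (1+|y|)\sqrt{g(|y|)}\bigr)\bigr), \]
whence $p(t, x, y) \le C\, t^{-d/2} \exp\bigl(-C'(t/2 + (1+|y|)\sqrt{g(|y|)})\bigr)$.

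The main obstacle is absorbing the factor $t^{-d/2}$ into the exponential, since in the almost-decreasing regime $t_0$ may tend to $0$, so the hypothesis does not force $t$ to stay bounded away from $0$. The key observation is that the same hypothesis, rewritten as $(1+|x|)\sqrt{g(|x|)} = (1+|x|)^2/t_0(|x|) \ge C_0(1+|x|)^2/t \ge C_0/t$, combined with the monotonicity of $s\mapsto (1+s)\sqrt{g(s)}$ and $|y|\ge |x|$, yields the quantitative lower bound
\[ (1+|y|)\sqrt{g(|y|)} \ge C_0/t. \]
Peeling off a small $\varepsilon>0$ portion of the exponent, I would then use $e^{-\varepsilon(1+|y|)\sqrt{g(|y|)}} \le e^{-\varepsilon C_0/t}$; the product $t^{-d/2}\, e^{-\varepsilon C_0/t}$ is uniformly bounded on $(0,\infty)$ since $u^{d/2} e^{-\varepsilon C_0 u} \to 0$ as $u=1/t \to \infty$. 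The remaining factor $e^{-(C'-\varepsilon)t/2}$ dominates $e^{-C_2 t}$ for any $C_2<(C'-\varepsilon)/2$, which delivers \eqref{l4-1-1}.
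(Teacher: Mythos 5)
Your proof is correct and follows essentially the same route as the paper: semigroup decomposition with $\sup_z p(t/2,x,z)\le c\,t^{-d/2}$, the almost-decreasing hypothesis to transfer $t>C_0 t_0(|x|)$ into $t/2\gtrsim t_0(|y|)$, then the large-time branch of Corollary~\ref{E:cor} applied to $T^V_{t/2}1(y)$. The only (cosmetic) deviation is the absorption of the prefactor $t^{-d/2}$: the paper bounds it directly by a polynomial in $(1+|y|)\sqrt{g(|y|)}$ via $t^{-d/2}\le(C_0 t_0(|x|))^{-d/2}\lesssim t_0(|y|)^{-d/2}$, whereas you derive $(1+|y|)\sqrt{g(|y|)}\ge C_0/t$ from monotonicity of $s\mapsto(1+s)\sqrt{g(s)}$ and use boundedness of $u\mapsto u^{d/2}e^{-\varepsilon C_0 u}$; both are valid and sacrifice a small share of the exponential decay.
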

\begin{proof}
Since $|x|\le |y|$, by \eqref{e4-1} and the almost decreasing property of $t_0(\cdot)$, we can find
a constant $c_1>0$ independent of $x,y$ such that
\begin{equation}\label{l4-1-1a}
t_0(|x|)\ge c_1t_0(|y|).
\end{equation}
Therefore,
\begin{equation}\label{l4-1-3}
\begin{split}
 p(t,x,y)
&=\int_{\R^d}p\left(t/2,x,z\right)p\left(t/2,z,y\right)dz\\
&\le \sup_{z\in \R^d}p\left(t/2,x,z\right)\int_{\R^d}p\left(t/2,z,y\right)dz=
\sup_{z\in \R^d}p\left(t/2,x,z\right)T_{t/2}^V1(y)\\
&\le c_{2}t^{-d/2}e^{-c_{3}t}\exp\left(-c_{4}(1+|y|)\sqrt{g(|y|)}\right).
\end{split}
\end{equation}
Here in the second inequality above we used \eqref{r2-1-} for $T_{t/2}^V 1(y)$, as well as
the fact that $t>C_0t_0(|x|)\ge C_0c_1t_0(|y|)$ ensured by \eqref{l4-1-1a}, and
\begin{equation*}
\sup_{z\in \R^d}p\left(t/2,x,z\right)\le \sup_{z\in \R^d}q\left(t/2,x,z\right)\le c_{5}t^{-d/2}.
\end{equation*}
The inequality above, along with the fact that for all $t>C_0t_0(|x|)$
\begin{align*}
t^{-d/2}&\le (C_0t_0(|x|))^{-d/2}\le c_{6}t_0(|y|)^{-d/2}
\le c_{7}\left(\frac{\sqrt{g(y)}}{1+|y|}\right)^{d/2}\\
&\le c_8\left((1+|y|)\sqrt{g(|y|)}\right)^{d/2}\le c_{9}\exp\left(\frac{c_{4}}{2}(1+|y|)\sqrt{g(|y|)}\right)
\end{align*}
with  $c_{4}$ being the constant in \eqref{l4-1-3},
yields the desired assertion \eqref{l4-1-1}.
\end{proof}

\begin{lemma}\label{l4-2}
Assume that $t_0(\cdot)$ is almost decreasing. For any $C_0>0$, there exist positive constants $C_{3}, C_{4}$ such that for all $x,y\in \R^d$ and $t>C_0t_0(|x|)$,
\begin{equation}\label{l4-2-1}
p(t,x,y)\ge C_{3}e^{-C_{4}t}\exp\left(-C_{4}(1+|y|)\sqrt{g(|y|)}\right).
\end{equation}
\end{lemma}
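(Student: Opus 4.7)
The plan is to prove the lower bound via a Chapman–Kolmogorov decomposition that routes the Brownian path through a fixed neighborhood of the origin, where $V$ is locally bounded. Since $t_0$ is almost decreasing and $|x|\le|y|$, there is $\kappa\ge 1$ with $t_0(|y|)\le \kappa\, t_0(|x|)$, so the hypothesis $t>C_0 t_0(|x|)$ also forces $t>(C_0/\kappa)t_0(|y|)$. Pick $\epsilon\in(0,C_0)$ small enough that $\epsilon(1+\kappa)\le C_0/2$ and set
\begin{equation*}
s_1:=\epsilon t_0(|x|),\qquad s_3:=\epsilon t_0(|y|),\qquad s_2:=t-s_1-s_3,
\end{equation*}
so that $s_2\ge t/2>0$. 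Setting $B_0:=B(0,1)$, the semigroup property gives
\begin{equation*}
p(t,x,y)\ge\int_{B_0}\!\!\int_{B_0} p(s_1,x,z_1)\,p(s_2,z_1,z_2)\,p(s_3,z_2,y)\,dz_1\,dz_2.
\end{equation*}

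For the two end pieces I would apply Proposition \ref{p3-1} (using the symmetry $p(s,u,v)=p(s,v,u)$ to put the smaller-norm argument first; the case $|x|<1$ reduces to trivial compact-set bounds). With $|z_1|\le 1$ and $s_1=\epsilon t_0(|x|)$, the identity $t_0(|x|)g(|x|)=(1+|x|)\sqrt{g(|x|)}$ shows that each of the three terms in the exponential of Proposition \ref{p3-1} is bounded by a constant multiple of $(1+|x|)\sqrt{g(|x|)}$, whence
\begin{equation*}
p(s_1,x,z_1)\ge c\, s_1^{-d/2}\exp\bigl(-C(1+|x|)\sqrt{g(|x|)}\bigr),
\end{equation*}
and analogously $p(s_3,z_2,y)\ge c\, s_3^{-d/2}\exp(-C(1+|y|)\sqrt{g(|y|)})$. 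For the middle piece, local boundedness of $V$ gives $V\le M$ on $B(0,2)$, so the Feynman–Kac formula with killing at $\partial B(0,2)$ yields $p(s_2,z_1,z_2)\ge e^{-Ms_2}q_{B(0,2)}(s_2,z_1,z_2)$. Rather than bound $q_{B(0,2)}$ pointwise (which degrades for small $s_2$), I integrate in $z_2$: combining the Dirichlet heat kernel bounds \eqref{l2-1-2a}--\eqref{l2-1-3} (rescaled to $B(0,2)$) with elementary estimates on $\Pp_{z_1}(B_{s_2}\in B_0,\tau_{B(0,2)}>s_2)$ gives
\begin{equation*}
\int_{B_0} q_{B(0,2)}(s_2,z_1,z_2)\,dz_2\ge c\,e^{-Cs_2},\qquad z_1\in B_0,
\end{equation*}
uniformly in $s_2>0$.

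Combining the three estimates by integrating first in $z_2$ and then in $z_1$ yields \eqref{l4-2-1} after adjusting constants, since (i) $t_0$ is bounded above (almost-decreasing forces $t_0\le C^* h(0)$), so $s_1^{-d/2}$ and $s_3^{-d/2}$ are bounded below by positive constants; (ii) $s_2\le t$, so $e^{-Cs_2}\ge e^{-Ct}$; and (iii) $|x|\le|y|$ together with the almost-decreasing property of $t_0$ gives $(1+|x|)\sqrt{g(|x|)}\le C(1+|y|)\sqrt{g(|y|)}$, absorbing the $x$-dependent factor into the $y$-dependent one. The main obstacle I anticipate is establishing the integrated lower bound on $q_{B(0,2)}$ uniformly in $s_2>0$: for large $s_2$ this follows from the eigenfunction-type decay in \eqref{l2-1-3}, but for small $s_2$ one must verify that Brownian motion starting from $z_1\in B_0$ has probability bounded below (uniformly in $z_1$) both to remain inside $B(0,2)$ up to time $s_2$ and to lie in $B_0$ at time $s_2$.
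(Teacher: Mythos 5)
Your argument is correct and accomplishes the same structural goal as the paper's---routing the Brownian path through a fixed neighborhood of the origin---but by a genuinely different technical route. The paper estimates $p(t/2,x,u)$ directly for $u\in B(0,2)$ by a fresh application of the exit-time representation \eqref{l2-4-2} (with $U=B(x,|x|-2)$, $W=B(0,3)$) together with \eqref{l2-5-1} and \eqref{l2-1-1}, essentially repeating the calculation of Lemma \ref{l3-2} in the inward direction, and then uses a two-fold Chapman--Kolmogorov $p(t,x,y)\ge\int_{B(0,2)}p(t/2,x,u)p(t/2,u,y)\,du$, with separate treatment for $|x|\ge 4$, $|x|\le 4\le|y|$, and $|x|,|y|\le 4$. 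You instead use a three-fold Chapman--Kolmogorov with tuned times $s_1=\epsilon t_0(|x|)$, $s_3=\epsilon t_0(|y|)$, apply the already-established Proposition \ref{p3-1} to the two end pieces (valid because the almost-decreasing hypothesis bounds $t_0$, hence $s_1,s_3$, above, so $s_j\le C_0' t_0(|z_j|)$ for small $\epsilon$), and handle the middle piece via local boundedness of $V$ and \eqref{l2-1-2a}--\eqref{l2-1-3}. This is more modular, since it reuses the small-time estimate rather than redoing the exit-time computation, and it avoids the case split on $|x|$. Two minor remarks: the inequality $(1+|x|)\sqrt{g(|x|)}\le(1+|y|)\sqrt{g(|y|)}$ follows already from $|x|\le|y|$ and the monotonicity of $g$, without the almost-decreasing property (though that property is essential elsewhere, to make $t_0$ bounded so $s_2\ge t/2$); and the step you flagged as the main obstacle does go through: for small $s_2$, \eqref{l2-1-2a} plus the elementary fact that $|B(0,1)\cap B(z_1,\sqrt{s_2})|\ge c\,s_2^{d/2}$ for any $z_1\in B(0,1)$ yields $\int_{B_0}q_{B(0,2)}(s_2,z_1,z_2)\,dz_2\ge c>0$, while for large $s_2$ \eqref{l2-1-3} gives the $e^{-Cs_2}$ bound.
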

\begin{proof} The proof is split into three cases.

{\bf Case 1:} $|x|\ge 4$.
According to    \eqref{e1-4}, for every $f\in C_b(\R^d)$ with ${\rm supp}[f]\subset B(0,5)$ and $u\in B(0,5)$,
\begin{align*}
T_t^Vf(u)&=\Ee_u\left[f(B_t)\exp\left(-\int_0^t V(B_s)\,ds\right)\right]\\
&\ge \Ee_u\left[f(B_t)\exp\left(-\int_0^t V(B_s)\,ds\right)\I_{\{t>\tau_{B(0,6)}\}}\right] \ge e^{-c_1t}\int_{B(0,5)}q_{B(0,6)}(t,u,z)f(z)\,dz,
\end{align*}
where we have used the fact that $\sup_{z\in B(0,6)}V(z)\le c_1$. This implies immediately that for any $u,z\in B(0,5)$,
\begin{equation*}
p(t,u,z)\ge e^{-c_1t}q_{B(0,6)}(t,u,z).
\end{equation*}
Then, by \eqref{l2-1-2a} and \eqref{l2-1-3}, for all $u,z\in B(0,5)$ and $t>0$,
\begin{equation}\label{l4-2-2}
\begin{split}
p(t,u,z)\ge&
\begin{cases}
c_2t^{-d/2}e^{-\frac{c_3}{t}},\ &\ 0<t\le 1\\
c_2e^{-c_3t},\ &\ t>1
\end{cases}\\
\ge &c_4\exp\left(-c_5\left(t+\frac{1}{t}\right)\right).
\end{split}
\end{equation}

Now set $U:=B(x,|x|-2)$ and $W:=B(0,3)$. So we obtain for all $u\in B(0,2)$,
\begin{equation}\label{l4-2-2a}
\begin{split}
  p(t,x,u)
&=\Ee_x\left[\exp\left(-\int_0^{\tau_U}V(B_s)\,ds\right)\I_{\{\tau_U\le t\}}
p\left(t-\tau_U,B_{\tau_U},u\right)
\right]\\
&\ge \Ee_x\left[\exp\left(-\tau_U \cdot \sup_{z\in U}V(z)\right)p\left(t-\tau_U,B_{\tau_U},u\right)
\I_{\{\tau_U\le t,B_{\tau_U}\in W\}}\right]\\
&= \frac{1}{2}\int_0^t \exp\left(-s\sup_{z\in U}V(z)\right)\left(\int_{\partial U \cap W}p(t-s,z,u)\frac{\partial q_U(s,x,\cdot)}{\partial n}(z)\,\sigma(dz)\right)\,ds\\
&\ge c_6\exp\left(-c_7\left(t+\frac{1}{t}\right)\right)
\int_0^{t/2}\exp\left(-c_7\left(sg(|x|)+\frac{|x|^2}{s}+\frac{s}{|x|^2}\right)\right)s^{-(d/2+1)}\,ds\\
&\ge c_6\exp\left(-c_7\left(t+\frac{1}{t}\right)\right)\int_{\frac{C_0t_0(|x|)}{8}}^{\frac{C_0t_0(|x|)}{4}}
\exp\left(-c_7\left(sg(|x|)+\frac{|x|^2}{s}+\frac{s}{|x|^2}\right)\right)s^{-(d/2+1)}\,ds\\
&\ge c_8\exp\left(-c_7\left(t+\frac{1}{t}\right)\right)\exp\left(-c_{9}|x|\sqrt{g(|x|)}\right)\\
&\ge c_{10}e^{-c_{7}t}\exp\left(-c_{11}(1+|x|)\sqrt{g(|x|)}\right).
\end{split}
\end{equation}
Here the first step is due to \eqref{l2-4-2}, the third step is a consequence of \eqref{l2-5-1}, the fourth
step
follows from \eqref{l2-1-1},
\eqref{l4-2-2}, as well as the
the property that $|\partial U \cap W|\ge c_{12}$, $\sup_{z\in U}V(z)\le c_{13}g(|x|)$,
in the sixth step we have used the fact that
$\Big($by noting $t_0(|x|)=\frac{1+|x|}{\sqrt{g(|x|)}}$$\Big)$,
\begin{align*}
& \int_{\frac{C_0t_0(|x|)}{8}}^{\frac{C_0t_0(|x|)}{4}}
\exp\left(-c_7\left(sg(|x|)+\frac{|x|^2}{s}+\frac{s}{|x|^2}\right)\right)s^{-(d/2+1)}\,ds\\
&\ge c_{14}t_0(|x|)^{-d/2}\cdot\inf_{s\in [\frac{C_0t_0(|x|)}{8},\frac{C_0t_0(|x|)}{4}]}\exp\left(-c_7\left(sg(|x|)+\frac{|x|^2}{s}+\frac{s}{|x|^2}\right)\right)\\
&\ge c_{15} \left((1+|x|)\sqrt{g(|x|)}\right)^{-d/2}\exp\left(-c_{16}|x|\sqrt{g(|x|)}\right)
\ge c_{17}\exp\left(-c_{18}|x|\sqrt{g(|x|)}\right)
\end{align*}
and the last inequality follows from the fact  that for all $C_0t_0(|x|)<t\le 1$ and $|x|\ge 4$,
$$
e^{-\frac{c_7}{t}}\ge e^{-\frac{c_{19}}{t_0(|x|)}}\ge \exp\left(-\frac{c_{20}\sqrt{g(|x|)}}{1+|x|}\right)\ge \exp\left(-c_{21}(1+|x|)\sqrt{g(|x|)}\right).
$$
In particular, we arrive at that for every $|x|\ge 4$, $u\in B(0,2)$ and $t>C_0t_0(|x|)$,
\begin{equation}\label{l4-2-3}
p(t,x,u)\ge c_{10}e^{-c_{7}t}\exp\left(-c_{11}(1+|x|)\sqrt{g(|x|)}\right).
\end{equation}

Noting that $|y|\ge |x|\ge 4$, we know from \eqref{l4-1-1a} that
\eqref{l4-2-3} still holds with $x$ replaced by $y$. Hence,
\begin{align*}
p(t,x,y)&\ge \int_{B(0,2)}p\left(t/2,x,u\right)p\left(t/2,u,y\right)\,du\ge c_{22}\inf_{u\in B(0,2)}p\left(t/2,x,u\right)\cdot \inf_{u\in B(0,2)}p\left(t/2,u,y\right)\\
&\ge c_{23}e^{-c_{24}t}\exp\left(-c_{24}(1+|y|)\sqrt{g(|y|)}\right).
\end{align*}
Thus, we obtain  \eqref{l4-2-1} under the condition that $|x|\ge 4$.

{\bf Case 2: $|x|\le 4$ and $|y|\ge 4$.} Since $|y|\ge 4$, \eqref{l4-2-3} holds with $x$ replaced by $y$ here. On the other hand, it is noted that
\eqref{l4-2-2} holds for $u,z\in B(0,2)$. Hence we have
\begin{align*}
p(t,x,y)&\ge \int_{B(0,2)}p\left(t/2,x,u\right)p\left(t/2,u,y\right)\,du\\
&\ge c_{25}\inf_{u\in B(0,2)}p\left(t/2,x,u\right)\cdot \inf_{u\in B(0,2)}p\left(t/2,u,y\right)\\
&\ge c_{26}
\exp\left(-c_{27}\left(t+\frac{1}{t}\right)\right)
\exp\left(-c_{27}(1+|y|)\sqrt{g(|y|)}\right)\\
&\ge c_{28}e^{-c_{29}t}\exp\left(-c_{29}(1+|y|)\sqrt{g(|y|)}\right),
\end{align*}
where   the last inequality is due to the fact that $t\ge \inf_{u\in B(0,4)}t_0(|u|)\ge c_{30}>0$. Therefore, we proved \eqref{l4-2-1} for $|x|\le 4$, $|y|\ge 4$.

{\bf Case 3:  $|x|\le 4$ and $|y|\le  4$.}
Note that by definition we have $\inf_{u\in B(0,4)}t_0(|u|)\ge c_{30}>0$, and so the assumption
$t>C_0t_0(|x|)$ implies that there exists a positive constant $c_{31}$ such that $t>c_{31}$.
As mentioned above, \eqref{l4-2-2} holds for $u,z\in B(0,4)$ and so it holds for $x,y$. Thus,
\begin{align*}
p(t,x,y)&\ge
c_{32}\exp\left(-c_{33}\left(t+\frac{1}{t}\right)\right)\\
&\ge c_{34}e^{-c_{35}t}\ge
c_{36}e^{-c_{37}t}\exp\left(-c_{37}(1+|y|)\sqrt{g(|y|)}\right),
\end{align*}
where in the second inequality we have used that $e^{-\frac{c_{33}}{t}}\ge c_{38}e^{-c_{35}t}$ for all $t>c_{31}$.
\end{proof}

\subsection{$t_0(\cdot)$ is almost increasing}
\begin{lemma}\label{l4-3}
Assume that $t_0(\cdot)$ is almost increasing. For any $C_0>0$, there are positive constants $C_{5}$, $C_{6}$ such that for all $x,y\in \R^d$ and $t>C_0t_0(|x|)$,
\begin{equation}\label{l4-3-1}
\begin{split}
p(t,x,y)&\le C_{5}
e^{-C_{6}t}\min\left\{\exp\left(-\frac{C_{6}(1+|y|)^2}{t}\right),\exp\left(-C_{6}(1+|y|)\sqrt{g(|y|)}\right)\right\}.
\end{split}
\end{equation}
\end{lemma}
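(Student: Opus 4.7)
The bound is equivalent to the two separate estimates (a)~$p(t,x,y)\le Ce^{-c(t+(1+|y|)^2/t)}$ and (b)~$p(t,x,y)\le Ce^{-c(t+(1+|y|)\sqrt{g(|y|)})}$ (possibly with different~$c$), since their minimum recovers the stated bound. My starting point in every regime is the Chapman--Kolmogorov inequality $p(t,x,y)\le\sup_z p(t/2,x,z)\cdot T_{t/2}^V 1(y)\le Ct^{-d/2}T_{t/2}^V 1(y)$, combined with $\|T_s^V 1\|_\infty\le Ce^{-cs}$ from Lemma~\ref{l2-3} and the sharper forms of Corollary~\ref{E:cor}. Because $t_0$ is almost increasing and $|x|\le|y|$, the assumption $t>C_0 t_0(|x|)$ does not by itself place us in the regime where Corollary~\ref{E:cor}'s second branch applies at $|y|$; this is the essential distinction from Lemma~\ref{l4-1} and forces a case split by $t$ relative to $t_0(|y|)$.

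\textbf{Case A:} $t\ge 2C_0 t_0(|y|)$. Corollary~\ref{E:cor}'s second branch yields $T_{t/2}^V 1(y)\le Ce^{-c(t/2+(1+|y|)\sqrt{g(|y|)})}$; bound (b) then follows by absorbing $t^{-d/2}$ into the exponential exactly as in the proof of Lemma~\ref{l4-1}, and (a) is automatic because in this regime $(1+|y|)\sqrt{g(|y|)}\ge c(1+|y|)^2/t$. \textbf{Case B:} $C_0 t_0(|x|)\le t<2C_0 t_0(|y|)$; I would split by distance. In the close sub-case $|x-y|\le|y|/4$ one has $|x|\ge 3|y|/4$, so by the almost-increasing property of $t_0$ together with the doubling \eqref{e1-2a} of~$g$ we have $t_0(|x|)\asymp t_0(|y|)$ and therefore $t\asymp t_0(|y|)$. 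In this narrow band $(1+|y|)^2/t\asymp(1+|y|)\sqrt{g(|y|)}$, so (a) and (b) become equivalent up to constants, and both follow from Corollary~\ref{E:cor}'s first branch $T_{t/2}^V 1(y)\le Ce^{-c(t/2)g(|y|)}$: the exponent $(t/2)g(|y|)$ bounds both $ct$ (since $g\ge 1$) and $c(1+|y|)\sqrt{g(|y|)}$ (via $t\asymp t_0(|y|)$) from below.

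In the far sub-case $|x-y|>|y|/4$ (small $|y|$ reducing directly to Lemma~\ref{l2-3}), we have $|x-y|\gtrsim 1+|y|$ and the bound $t<2C_0 t_0(|y|)\le 2C_0(1+|y|)$ guarantees $|x-y|>2C_0 t^{1/2}$ for $|y|$ sufficiently large, so Lemma~\ref{l2-4} applies and yields $p(t,x,y)\le Ct^{-d/2}\exp(-c(1+|y|)^2/t-c(1+|y|)\sqrt{g(|y|)})$. This is missing only an $e^{-ct}$ factor, which I would introduce by writing $p=\sqrt{p\cdot p}$ and pairing the Lemma~\ref{l2-4} estimate with the crude baseline $p\le Ct^{-d/2}e^{-ct/2}$ coming from Chapman--Kolmogorov and Lemma~\ref{l2-3}: the geometric mean yields (a) and (b) simultaneously, after absorbing $t^{-d/2}$ into the spatial exponentials. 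The main obstacle is the close sub-case of Case~B, where the delicate step is to show rigorously that $t_0(|x|)\asymp t_0(|y|)$ under $|x|\ge 3|y|/4$, via the doubling of $g$ and the almost-increasing property of $t_0$; this reduces (a) and (b) to a single bound that the first branch of Corollary~\ref{E:cor} is sharp enough to deliver.
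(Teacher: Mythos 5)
Your argument is correct and reaches the conclusion, but the case decomposition is organized differently from the paper's. The paper splits first by \emph{space}: Case 1 is $|y|\ge 2$ and $|y|\ge 2|x|$ (which forces $|x-y|\ge|y|/2$, i.e.\ far apart), then within Case 1 by whether $t\le C_0't_0(|y|)$ or not; Case 2 is $|y|\ge 2$ and $|y|\le 2|x|$, where $t_0(|x|)\asymp t_0(|y|)$ puts $t\gtrsim t_0(|y|)$ automatically and only the second branch of Corollary~\ref{E:cor} is needed; Case 3 is $|y|\le 2$. You split first by \emph{time} ($t$ versus $2C_0t_0(|y|)$), then in the narrow band by distance ($|x-y|$ versus $|y|/4$). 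The two decompositions are not a relabelling of each other: your far sub-case of Case~B intersects the paper's Case~2 (when $|y|/2\le|x|$ but $|x-y|>|y|/4$), and there you reach for Lemma~\ref{l2-4} while the paper handles it via the second branch of Corollary~\ref{E:cor} applied at $y$, exploiting $t\gtrsim t_0(|y|)$. Your close sub-case, with $t\asymp t_0(|y|)$ and the \emph{first} branch of Corollary~\ref{E:cor}, is a valid alternative to the paper's second-branch route since both branches apply on the overlap $t\asymp t_0(|y|)$. The other genuinely different ingredient is how you reinstate the $e^{-ct}$ factor in the far sub-case: you use the geometric-mean trick $p\le\sqrt{p_1p_2}$ with $p_1$ from Lemma~\ref{l2-4} and $p_2=Ct^{-d/2}e^{-ct}$, whereas the paper's (l4-3-3)--(l4-3-4) split the Chapman--Kolmogorov integral into $\{|z-y|\le|x-y|/2\}$ and its complement, taking the sup of the far factor (Lemma~\ref{l2-4}) and integrating the other against $T^V_{t/2}1$ (Lemma~\ref{l2-3}); both yield the same $e^{-ct}$ gain, the geometric mean being slightly cruder but more economical. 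One small overstatement worth flagging: in your close sub-case, $t_0(|x|)\asymp t_0(|y|)$ under $|x|\ge 3|y|/4$ follows from doubling of $g$ alone (no almost-increasing hypothesis is needed there), and the uniform lower bound $t\ge C_0t_0(|x|)\ge C_0C_*>0$, which you use repeatedly to absorb $t^{-d/2}$, comes from the $h\ge 1$ normalization in the definition of almost increasing --- both of which you implicitly use correctly but attribute to the wrong hypothesis.
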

\begin{proof} The proof is split into three cases.

{\bf Case 1:  $|y|\ge 2$ and $|y|\ge 2|x|$.}
When $|y|\ge 2$ and $|y|\ge 2|x|$, by the condition that $t_0(\cdot)$ is almost increasing we can find a constant $C_0'>0$ such that $C_0't_0(|y|)> C_0t_0(|x|)$.
We first consider the case $C_0t_0(|x|)< t\le C_0't_0(|y|)$.

Note that for every $z\in \R^d$ satisfying $|z-y|\le |x-y|/2$, it holds that
\begin{align}\label{l4-3-2}
|z-x|\ge |y-x|/2\ge \frac{|y|-|x|}{2}\ge \frac{|y|}{4}\ge c_1t_0(|y|)\ge c_2\max\{t,1\}\ge 2c_3t^{1/2}.
\end{align}
where we have used the facts that $t_0(y)\le 1+|y|\le 2|y|$, $t\le C_0't_0(|y|)$ and  $\inf_{z\in \R^d}t_0(|z|)>0$ thanks to the fact that $t_0(\cdot)$ almost increasing.
Hence we can apply \eqref{l2-4-1} to obtain that for all $z\in \R^d$ with $|z-y|\le |x-y|/2$,
\begin{equation}\label{l4-3-3}
\begin{split}
p\left(t/2,x,z\right)&\le c_4t^{-d/2}\exp\left(-c_5\left(|x-z|\sqrt{\max\{g(|x|),g(|z|)\}}+\frac{|x-z|^2}{t}\right)\right)\\
&\le c_6\exp\left(-c_{7}\left(|y|\sqrt{g(|y|)}+\frac{|y|^2}{t}\right)\right)\le c_{8}\exp\left(-\frac{c_{9}(1+|y|)^2}{t}\right).
\end{split}
\end{equation}
Here the second inequality follows from \eqref{l4-3-2}, the facts that $t>C_0t_0(|x|)\ge c_{10}$ and,  for all $z\in \R^d$ with $|z-y|\le |x-y|/2$,
\begin{align*}
g(|z|)\ge c_{11}g\left(|y|-|z-y|\right)\ge c_{12}g\left(|y|-\frac{|y|+|x|}{2}\right)\ge c_{13}g(|y|),
\end{align*}
where we have also used \eqref{e1-2a}, and the last inequality is a consequence of the property that
\begin{align*}
\frac{|y|^2}{t}\ge c_{14}|y|\sqrt{g(|y|)},\quad t\le C_0't_0(|y|)\hbox{ and } |y|\ge 2.
\end{align*}

While for every $z\in \R^d$ with $|z-y|> |x-y|/2$, according to \eqref{l4-3-2} we can still apply \eqref{l2-4-1}
follow the same arguments above to derive that
\begin{equation}\label{l4-3-4}
\begin{split}
p\left(t/2,z,y\right)&\le c_{15}t^{-d/2}\exp\left(-c_{16}\left(|y-z|\sqrt{\max\{g(|y|),g(|z|)\}}+\frac{|y-z|^2}{t}\right)\right)\\
&\le c_{17}\exp\left(-c_{18}\left(|y|\sqrt{g(|y|)}+\frac{|y|^2}{t}\right)\right)\\
&\le c_{19}\exp\left(-\frac{c_{20}(1+|y|)^2}{t}\right).
\end{split}
\end{equation}

Now, write
\begin{align*}
p(t,x,y)
&=\int_{\R^d}p\left(t/2,x,z\right)p\left(t/2,z,y\right)dz\\
&=\left(\int_{\{z:|z-y|> |x-y|/2\}}+\int_{\{z:|z-y|\le |x-y|/2\}}\right)
p\left(t/2,x,z\right)p\left(t/2,z,y\right)dz\\
&=:I_1+I_2
\end{align*}
According to \eqref{l4-3-4}, we have
\begin{align*}
I_1&\le \sup_{z:|z-y|>|x-y|/2}p\left(t/2,z,y\right)\cdot \int_{\R^d}p\left(t/2,x,z\right)\,dz\\
&\le c_{21}e^{-c_{22}t}\exp\left(-\frac{c_{23}(1+|y|)^2}{t}\right),
\end{align*}
where we have used the fact
\begin{align*}
\int_{\R^d}p\left(t/2,x,z\right)dz=T_{t/2}^V1(x)\le c_{24}e^{-c_{25}t}.
\end{align*} thanks to \eqref{l2-3-1}.
On the other hand, following the same argument as above and using \eqref{l4-3-3}, we deduce
\begin{align*}
I_2\le  c_{26}e^{-c_{27}t}\exp\left(-\frac{c_{27}(1+|y|)^2}{t}\right).
\end{align*}
Putting both estimates above for $I_1$ and $I_2$ together, we can get the desired assertion \eqref{l4-3-1} when $C_0t_0(|x|)< t\le C_0't_0(|y|)$.

Next, we consider the case that $t>C_0't_0(|y|)$.
Then, by \eqref{r2-1-}, we obtain
\begin{equation*}
T_{t/2}^V 1(y)\le c_{28}e^{-c_{29}t}\exp\left(-c_{29}(1+|y|)\sqrt{g(|y|)}\right).
\end{equation*}
Therefore,
\begin{equation}\label{l4-3-5}
\begin{split}
p(t,x,y)&=\int_{\R^d}p\left(t/2,x,z\right)p\left(t/2,z,y\right)dz\le \sup_{z\in \R^d}p\left(t/2,x,z\right)\cdot T_{t/2}^V 1(y)\\
&\le c_{30}t^{-d/2}e^{-c_{29}t}\exp\left(-c_{29}(1+|y|)\sqrt{g(|y|)}\right)\\
&\le c_{31}e^{-c_{29}t}\exp\left(-c_{29}(1+|y|)\sqrt{g(|y|)}\right),
\end{split}
\end{equation}
where we used the facts that $t\ge C_0't_0(|y|)\ge c_{32}$ and
\begin{align*}
\sup_{z\in \R^d}p\left(t/2,x,z\right)\le
\sup_{z\in \R^d}q\left(t/2,x,z\right)\le c_{33}t^{-d/2}\le c_{34}.
\end{align*}
In particular, \eqref{l4-3-5} implies that \eqref{l4-3-1} holds when $t>C_0't_0(|y|)$, due to the fact that
\begin{align*}
\frac{(1+|y|)^2}{t}\le c_{35}(1+|y|)\sqrt{g(|y|)},\quad t>C_0't_0(|y|)\hbox{ and } |y|\ge 2.
\end{align*}

{\bf Case 2: $|y|\ge 2$ and $|y|\le 2|x|$.} By the definition of $t_0(\cdot)$ and the fact that $|x|\le |y|\le 2|x|$, there exist positive constants $c_{36}, c_{37}$ such that
$
c_{36}t_0(|x|)\le t_0(|y|)\le c_{37}t_0(|x|),
$
which implies that $t\ge C_0t_0(|x|)\ge C_0c_{37}^{-1}t_0(|y|)\ge c_{38}$.
Therefore, following the same arguments for \eqref{l4-3-5}, we can prove that for all $t>C_0t_0(|x|)$,
\begin{align*}
p(t,x,y)\le c_{39}e^{-c_{40}t}\exp\left(-c_{40}(1+|y|)\sqrt{g(|y|)}\right)
\end{align*}

Meanwhile, the fact $t>C_0t_0(|x|)\ge C_0c_{37}^{-1}t_0(|y|)$ implies that
\begin{align*}
 \exp\left(-c_{40}(1+|y|)\sqrt{g(|y|)}\right)\le c_{41}\exp\left(\frac{-c_{42}(1+|y|)^2}{t}\right).
\end{align*}

Putting both estimates above together,
we can prove \eqref{l4-3-1} when  $|y|\ge 2$ and $|y|\le 2|x|$.

{\bf Case 3:  $|y|\le 2$.} If $|x|\le |y|\le 2$, then one can find a positive constant $c_{43}$ such that
$t \ge C_0t_0(|x|)\ge c_{43}$.
Hence,
\begin{align*}
p(t,x,y)&=\int_{\R^d}p\left(t/2,x,z\right)p\left(t/2,z,y\right)dz\le \sup_{z\in \R^d}p\left(t/2,x,z\right)\cdot T_{t/2}^V 1(y)\le c_{44}t^{-d/2}e^{-c_{45}t}\\
&\le  c_{46}e^{-c_{47}t}\min\left\{\exp\left(-c_{47}(1+|y|)\sqrt{g(|y|)}\right),\exp\left(-\frac{c_{47}(1+|y|)^2}{t}\right)\right\},
\end{align*}
where the last inequality follows from the fact for every $t\ge C_0t_0(|x|)\ge c_{43}$ and $|y|\le 2$,
\begin{align*}
\min\left\{\exp\left(-c_{47}(1+|y|)\sqrt{g(|y|)}\right),\exp\left(-\frac{c_{47}(1+|y|)^2}{t}\right)\right\}\ge c_{48}.
\end{align*}
By now we have finished the proof. \end{proof}

\begin{lemma}\label{l4-4}
Assume that $t_0(\cdot)$ is almost increasing. For any $C_0>0$, there are positive constants $C_{7}$ and $C_{8}$ such that
for all $x,y\in \R^d$ and $t>C_0t_0(|x|)$,
\begin{equation}\label{l4-4-1}
p(t,x,y)\ge C_{7}e^{-C_{8}t}\min\left\{\exp\left(-\frac{C_{8}(1+|y|)^2}{t}\right),
\exp\left(-C_{8}(1+|y|)\sqrt{g(|y|)}\right)\right\}.
\end{equation}
\end{lemma}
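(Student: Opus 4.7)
The plan is to divide the regime $t>C_0t_0(|x|)$ according to the ratio $t/t_0(|y|)$ and apply two complementary techniques. Since $t_0(\cdot)$ is almost increasing and $|x|\le|y|$, one has $t_0(|x|)\le ct_0(|y|)$, so $t>C_0t_0(|x|)$ does not force $t\gtrsim t_0(|y|)$. I split into (a) the large-time regime $t\ge C_0't_0(|y|)$ for a suitably large $C_0'$, where I would adapt the proof of Lemma \ref{l4-2}; and (b) the intermediate-time regime $C_0t_0(|x|)\le t<C_0't_0(|y|)$, where I would use a Brownian-bridge Jensen argument. Since $t\le C_0't_0(|y|)$ is equivalent (up to constants) to $(1+|y|)^2/t\ge(1+|y|)\sqrt{g(|y|)}$, the two regimes align precisely with the two branches of the minimum.

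In the large-time regime, for $C_0'$ chosen large enough one has $t/2>C_0t_0(|y|)\ge C_0''t_0(|x|)$, so Lemma \ref{l4-2}'s machinery applies to both $p(t/2,x,u)$ and $p(t/2,u,y)$ for $u\in B(0,2)$. Repeating the computation of \eqref{l4-2-2a} with $y$ in place of $x$ produces $p(t/2,u,y)\ge Ce^{-Ct}\exp(-C(1+|y|)\sqrt{g(|y|)})$ for $u\in B(0,2)$, and combining with the analogous bound near $x$ (or directly with the central estimate \eqref{l4-2-2} when $|x|\le 4$) via Chapman--Kolmogorov gives $p(t,x,y)\ge Ce^{-Ct}\exp(-C(1+|y|)\sqrt{g(|y|)})$, using $(1+|x|)\sqrt{g(|x|)}\le(1+|y|)\sqrt{g(|y|)}$.

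In the intermediate-time regime, I would invoke the Brownian-bridge form of the Feynman--Kac formula,
\[
p(t,x,y)=q(t,x,y)\,\Ee^{x,y}_t\Bigl[\exp\Bigl(-\int_0^tV(B_s)\,ds\Bigr)\Bigr],
\]
where $\Ee^{x,y}_t$ denotes expectation under the Brownian bridge from $x$ to $y$ on $[0,t]$. Jensen's inequality yields $\Ee^{x,y}_t[\exp(-\int_0^tV(B_s)\,ds)]\ge\exp(-\int_0^t\Ee^{x,y}_t[V(B_s)]\,ds)$. Under the bridge, $B_s$ is Gaussian with mean $m_s$ on the segment from $x$ to $y$ (so $|m_s|\le|y|$) and per-coordinate variance $s(t-s)/t\le t/4\le C(1+|y|)^2/g(|y|)$. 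Since the doubling condition \eqref{e1-2a} forces $g$ to have polynomial growth, the Gaussian tail can be integrated to give $\Ee^{x,y}_t[V(B_s)]\le Cg(|y|)$ uniformly in $s\in(0,t)$. Combining with $q(t,x,y)\ge ct^{-d/2}\exp(-|x-y|^2/(2t))$, the bound $|x-y|^2/t\le 4(1+|y|)^2/t$, and the intermediate-regime inequality $tg(|y|)\le c(1+|y|)^2/t$, one obtains $p(t,x,y)\ge ct^{-d/2}\exp(-c'(1+|y|)^2/t)\ge c''e^{-C_8t}\exp(-C_8(1+|y|)^2/t)$ after using $t^{-d/2}\ge c_0e^{-C_8t}$ for suitable constants.

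The main technical obstacle is the uniform estimate $\Ee^{x,y}_t[V(B_s)]\le Cg(|y|)$, which requires the polynomial-growth consequence of \eqref{e1-2a} together with a careful Gaussian tail bound for the bridge, exploiting that $s(t-s)/t\le C(1+|y|)^2/g(|y|)$ in this regime. Once this is in hand, the remaining algebra parallels the structure of Lemmas \ref{l3-2}, \ref{l4-2}, and \ref{l4-3}.
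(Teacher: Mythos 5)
Your proposal is correct, but in the intermediate-time regime it takes a genuinely different route from the paper. The paper handles $C_0t_0(|x|)\le t\lesssim t_0(|y|)$ by the exit-time representation \eqref{l2-4-2}: running Brownian motion from $y$ out of $B(y,|y|-2)$ toward the origin, integrating the hitting-time density from Lemma~\ref{l2-1} over the window $s\in[t/4,t/2]$, multiplying by the near-origin bound \eqref{l4-2-2}, and then patching with the $x$-side via Chapman--Kolmogorov through $B(0,2)$; this produces exactly the $e^{-ct}\exp(-c|y|^2/t)$ branch. You instead invoke the Brownian-bridge conditioning $p(t,x,y)=q(t,x,y)\,\Ee^{x,y}_t[\exp(-\int_0^t V(B_s)\,ds)]$ and Jensen, reducing everything to $\Ee^{x,y}_t[V(B_s)]\le Cg(|y|)$. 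That estimate does hold: the bridge mean lies on $[x,y]$ so $|m_s|\le|y|$; the per-coordinate variance is $\le t/4\lesssim t_0(|y|)=(1+|y|)/\sqrt{g(|y|)}$, and almost-increasingness forces $\sqrt{g(r)}\lesssim 1+r$, so the Gaussian tail $\exp\bigl(-c(1+|y|)^2/\sigma_s^2\bigr)\lesssim\exp\bigl(-c(1+|y|)\sqrt{g(|y|)}\bigr)$ beats the polynomial growth of $g$ from \eqref{e1-2a}, while the bulk contributes $\lesssim g(2(1+|y|))\lesssim g(|y|)$ by doubling and local boundedness of $V$. The remaining algebra ($tg(|y|)\lesssim(1+|y|)^2/t$ on this range, $|x-y|^2/t\le 4(1+|y|)^2/t$, $t^{-d/2}\gtrsim e^{-ct}$ since almost-increasingness gives $t\gtrsim t_0(|x|)\gtrsim 1$) is sound. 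In the large-time regime $t\gtrsim t_0(|y|)$ you essentially reproduce the paper's argument, and the implicit handling of bounded $|y|$ via \eqref{l4-2-2} in place of \eqref{l4-4-2} matches the paper's Cases~2--3. The bridge/Jensen route is arguably more transparent here because it sidesteps the surface-measure computation; the paper's exit-time method has the virtue of being the single uniform framework shared with the upper bound (Lemma~\ref{l4-3}) and with Section~\ref{section3}.
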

\begin{proof} The proof is split into three cases.

{\bf Case 1:  $|x|\ge 4$.} By \eqref{l4-2-3} (noting that in the proof of \eqref{l4-2-2a} we do not use the almost decreasing property of $t_0(\cdot)$), we know that for all $|u|\ge 4$, $z\in B(0,2)$ and  $t>C_0t_0(|u|)$,
\begin{equation}\label{l4-4-2}
p(t,u,z)\ge c_1e^{-c_2t}\exp\left(-c_2|u|\sqrt{g(|u|)}\right).
\end{equation}
 On the other hand, for
$|u|\ge 4$, $z\in B(0,2)$ and $C_0t_0(|x|)\le t\le C_0't_0(|u|)$, one can follow the proof of \eqref{l4-2-3} to see that the fourth step in \eqref{l4-2-2a} still holds. Thus,
\begin{align*}
p(t,u,z)
&\ge c_3
\exp\left(-c_4\left(t+\frac{1}{t}\right)\right)
\int_0^{t/2}\exp\left(-c_4\left(sg(|u|)+\frac{|u|^2}{s}+\frac{s}{|u|^2}\right)\right)s^{-(d/2+1)}\,ds\\
&\ge c_5e^{-c_6t}\int_{{t}/{4}}^{t/2}\exp\left(-c_6\left(sg(|u|)+\frac{|u|^2}{s}+\frac{s}{|u|^2}\right)\right)s^{-(d/2+1)}\,ds\\
&\ge c_7t^{-d/2}e^{-c_6t}\exp\left(-c_8\left(tg(|u|)+\frac{|u|^2}{t}+\frac{t}{|u|^2}\right)\right)\ge c_9e^{-c_{10}t}\exp\left(-\frac{c_{10}|u|^2}{t}\right).
\end{align*}
Here in the second inequality we have used the property
\begin{align*}
\exp\left(-c_4\left(t+\frac{1}{t}\right)\right)
\ge c_{11}e^{-c_{12}t},\quad  t>C_0t_0(|x|)\ge c_{13},
\end{align*}
and the last inequality follows from the fact that for all $u\in \R^d$ with $|u|\ge4$,
\begin{align*}
\frac{|u|^2}{t}\ge c_{14}\left( 
tg(|u|)+\frac{t}{|u|^2}\right)\ge c_{15},\quad c_{13}\le t\le C_0t_0(|u|)\le c_{16}|u|^2.
\end{align*}
Combining all the estimates above for $p(t,u,z)$ together yields that for every
$z\in B(0,2)$ (by considering the different cases that $t\le C_0't_0(|y|)$ and $t>C_0't_0(|y|)$)
\begin{equation}\label{l4-4-3}
p(t,y,z)\ge c_{17}e^{-c_{18}t}\min\left\{\exp\left(-c_{18}|y|\sqrt{g(|y|)}\right), \exp\left(-\frac{c_{18}|y|^2}{t}\right)\right\}.
\end{equation}
Hence, by \eqref{l4-4-2} and \eqref{l4-4-3}, we obtain that for $t>C_0t_0(|x|)$,
\begin{align*}
  p(t,x,y)
&\ge \int_{B(0,2)}p\left(t/2,x,z\right)p\left(t/2,z,y\right)dz\\
&\ge c_{19}\inf_{z\in B(0,2)}p\left(t/2,x,z\right)\cdot \inf_{z\in B(0,2)}p\left(t/2,z,y\right)\\
&\ge c_{20}e^{-c_{21}t}\exp\left(-c_{21}|x|\sqrt{g(|x|)}\right) e^{-c_{21}t}\min\left\{ \exp\left(-c_{21}|y|\sqrt{g(|y|)}\right), \exp\left(-\frac{c_{21}|y|^2}{t}\right)\right\}\\
&\ge c_{20}e^{-2c_{21}t}\min\left\{\exp\left(-2c_{21}(1+|y|)\sqrt{g(|y|)}\right), \exp\left(-\frac{2c_{21}(1+|y|)^2}{t}\right)\right\}.
\end{align*}
Thus, we have shown \eqref{l4-4-1} for $|y|\ge |x|\ge 4$.

{\bf Case 2:  $|x|\le 4$ and $|y|\ge 4$.}  When $|x|\le 4$ and $|y|\ge 4$, we can use \eqref{l4-2-2} instead of \eqref{l4-4-2}, and then follow the arguments in
{\bf Case 1}  (using \eqref{l4-2-2} and \eqref{l4-4-3}) to prove \eqref{l4-4-1}.

{\bf Case 3:  $|x|\le 4$ and $|y|\le 4$.} According to \eqref{l4-2-2} and the fact $t>C_0t_0(|x|)\ge c_{22}$, we can verify \eqref{l4-4-1} by the same procedure
as that of  {\bf Case 3} in the proof of Lemma \ref{l4-2}.
\end{proof}

Putting  all Lemmas \ref{l4-1}--\ref{l4-4} together, we will obtain the two-sided estimates of
$p(t,x,y)$ for all $t>C_0t_0(|x|)$.
\begin{proposition}\label{p4-1}
Suppose that $t_0(\cdot)$ is either almost increasing or almost decreasing.
For any $C_0>0$, there exist positive constants $C_{i}$, $9\le i \le 12$, such that
for every $x,y\in \R^d$ with $|x|\le |y|$ and  $t>C_0t_0(|x|)$,
\begin{equation}\label{p3-1-1}
\begin{split}
&p(t,x,y)\ge C_{9}e^{-C_{10}t}\min\left\{
\exp\left(-C_{10}(1+|y|)\sqrt{g(|y|)}\right), \exp\left(-\frac{C_{10}(1+|y|)^2}{t}\right)\right\},\\
&p(t,x,y)\le C_{11}e^{-C_{12}t}\min\left\{
\exp\left(-C_{12}(1+|y|)\sqrt{g(|y|)}\right), \exp\left(-\frac{C_{12}(1+|y|)^2}{t}\right)\right\}.
\end{split}
\end{equation}
\end{proposition}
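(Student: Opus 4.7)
The plan is to assemble Lemmas \ref{l4-1}--\ref{l4-4} into a single two-sided bound, splitting the argument along the two mutually exclusive hypotheses on $t_0(\cdot)$.

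In the almost increasing case, Lemmas \ref{l4-3} and \ref{l4-4} already state the upper and lower bounds precisely in the $\min$-form appearing in \eqref{p3-1-1}, so there is nothing to do beyond renaming constants.

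In the almost decreasing case, Lemmas \ref{l4-1} and \ref{l4-2} give the matching bounds
\[
p(t,x,y)\asymp e^{-Ct}\exp\bigl(-C(1+|y|)\sqrt{g(|y|)}\bigr).
\]
To cast this in the $\min$-form of \eqref{p3-1-1}, I would show that under the standing hypotheses $|x|\le|y|$ and $t>C_0 t_0(|x|)$, one has the equivalence
\[
\min\Bigl\{\exp\bigl(-C(1+|y|)\sqrt{g(|y|)}\bigr),\,\exp\Bigl(-\tfrac{C(1+|y|)^2}{t}\Bigr)\Bigr\}\asymp \exp\bigl(-C(1+|y|)\sqrt{g(|y|)}\bigr),
\]
up to adjustment of $C$. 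This reduces to the inequality $(1+|y|)^2/t\le c\,(1+|y|)\sqrt{g(|y|)}$, which follows at once from \eqref{e4-1}: since the reference function $h$ is decreasing and $|x|\le|y|$, one has $h(|y|)\le h(|x|)$, whence $t_0(|y|)\le (C^*/C_*)\,t_0(|x|)$ and therefore
\[
t\ge C_0 t_0(|x|)\ge \frac{C_0 C_*}{C^*}\,t_0(|y|)=\frac{C_0 C_*}{C^*}\cdot\frac{1+|y|}{\sqrt{g(|y|)}}.
\]
Rearranging yields the desired inequality between $(1+|y|)^2/t$ and $(1+|y|)\sqrt{g(|y|)}$, and therefore both exponential terms in the minimum are comparable.

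The only remaining book-keeping is to choose $C_9,\ldots,C_{12}$ uniformly across the two cases, which is handled by taking suitable minima and maxima of the constants produced by Lemmas \ref{l4-1}--\ref{l4-4} together with the constant $c$ above. I do not anticipate any genuine obstacle, since the heavy analytic work has already been done in the four preceding lemmas; the proposition is essentially a repackaging result, with the only small point to verify being the equivalence of the two exponential factors in the almost decreasing case.
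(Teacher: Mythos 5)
Your proposal is correct and fills in precisely the book-keeping that the paper leaves implicit (the paper states only that "putting all Lemmas \ref{l4-1}--\ref{l4-4} together" yields the proposition, without writing out the reconciliation of the two forms). In the almost increasing case, Lemmas \ref{l4-3} and \ref{l4-4} already deliver the stated $\min$-form and you correctly note nothing more is needed. In the almost decreasing case you correctly identify the one genuine point to check: that the single-exponential bound of Lemmas \ref{l4-1}--\ref{l4-2} is compatible with the $\min$-form, and your derivation of $t_0(|y|)\le (C^*/C_*)t_0(|x|)$ from \eqref{e4-1} and $|x|\le|y|$, leading to $(1+|y|)^2/t\le (C^*/(C_0C_*))\,(1+|y|)\sqrt{g(|y|)}$, is exactly the needed inequality. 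One could remark for precision that only the upper bound actually requires this comparison — the lower bound follows trivially because the $\min$ is dominated by the $\sqrt{g}$-exponential — but this does not affect the validity of what you wrote. The argument matches the paper's intended route.
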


\section{Proofs}\label{sec5}

\begin{proof}[Proof of Theorem $\ref{thm1}$]
Theorem \ref{thm1} immediately follows from Propositions \ref{p3-1} and \ref{p4-1}.
\end{proof}

\begin{proof}[Proof of Example $\ref{ex1-1}$] Let $V(x)=|x|^\alpha$ with $\alpha>0$.
Taking $g(r)=(1+r)^\alpha$ and applying Theorem \ref{thm1}, we find that for every $x,y\in \R^d$ with $|x|\le |y|$,
\begin{equation}\label{ex1-1-4}
\begin{split}
p(t,x,y)\asymp
\begin{cases}
t^{-d/2}e^{-\frac{|x-y|^2}{t}}e^{-t(1+|x|)^\alpha-|x-y|(1+|y|)^{{\alpha}/{2}}},
&\quad t\le (1+|x|)^{1-{\alpha}/{2}},\\
e^{-t}e^{ -(1+|y|)^{1+{\alpha}/{2}} -\frac{(1+|y|)^2}{t}},&\quad t>
(1+|x|)^{1-{\alpha}/{2}}.
\end{cases}
\end{split}
\end{equation}

Next, we first consider $\alpha\in (2,\infty)$. In this case, due to $|x|\le |y|$,
$$\max\left\{(1+|x|)^{1-{\alpha}/{2}}, (1+|y|)^{1-{\alpha}/{2}}\right\}=(1+|x|)^{1-{\alpha}/{2}}.$$
When $x,y\in \R^d$ with $|x-y|\le |y|/2$ or $|y|\le 2$ (which implies that $|y|/2\le |x|\le |y|$ 
in particular
when $|x-y|\le |y|/2$), for every $t>0$ it holds
$$|x-y|(1+|y|)^{{\alpha}/{2}}\le \frac{|x-y|^2}{2t}+\frac{t(1+|y|)^{{\alpha}/{2}}}{2}\le \frac{|x-y|^2}{2t}+\frac{t(
3+2|x|)^{{\alpha}/{2}}}{2}$$ and $$(1+|x|)^\alpha\asymp  (1+|y|)^\alpha.$$
Hence, when $0\le t\le (1+|x|)^{1-\alpha/2}$, we can verify \eqref{ex1-1-2} by the first estimate in \eqref{ex1-1-4}.
Meanwhile, when $t>(1+|x|)^{1-\alpha/2}$, \eqref{ex1-1-2}
is a consequence of the second estimate in \eqref{ex1-1-4}, thanks to the fact that $(1+|y|)^{1+\alpha/2}\ge (1+|y|)^2/t$ for all $t>(1+|x|)^{1-\alpha/2}\ge (1+|y|)^{1-\alpha/2}$.

When
$|x-y|>|y|/2$ and $|y|>2$ (which implies that $|y|/2< |x-y|\le 2|y|$),
$$
 |x-y|(1+|y|)^{{\alpha}/{2}}\asymp (1+|y|)^{1+{\alpha}/{2}}$$ and,  for $0<t\le (1+|x|)^{1-{\alpha}/{2}}$,
$$t(1+|x|)^\alpha \le \frac{(1+|x|)^2}{t}
\le \frac{(1+|y|)^2}{t}
\le \frac{c_1|x-y|^2}{t}.$$ Hence, we can see that \eqref{ex1-1-3} holds true, thanks to \eqref{ex1-1-4} again.

Now, we consider $\alpha\in (0,2]$. Then, due to $|x|\le |y|$,
$$\max\left\{(1+|x|)^{1-{\alpha}/{2}}, (1+|y|)^{1-{\alpha}/{2}}\right\}=(1+|y|)^{1-{\alpha}/{2}}.$$
Suppose that $x,y\in \R^d$ with
$|x-y|\le |y|/2$ or $|y|\le 2$.
Then, following the argument above, we can see from \eqref{ex1-1-4} that \eqref{ex1-1-2} holds when $t\le (1+|x|)^{1-{\alpha}/{2}}$. Furthermore, when
 $(1+|x|)^{1-{\alpha}/{2}}<t\le (1+|y|)^{1-{\alpha}/{2}}$,
\begin{equation}\label{ex1-1-1} \frac{(1+|y|)^2}{t}\asymp t(1+|y|)^\alpha\asymp (1+|y|)^{1+{\alpha}/{2}},\quad c_2t^{-d/2}e^{-c_3t(1+|y|)^\alpha}\le e^{-c_4t}\le c_5t^{-d/2}.\end{equation}
Combining all the properties above with \eqref{ex1-1-4},  we can prove that \eqref{ex1-1-2} holds
when $|x-y|\le |y|/2$ or $|y|\le 2$.
Similarly, we can apply the arguments above to see that  \eqref{ex1-1-3} holds
when $|x-y|>|y|/2$ and $|y|>2$.
\end{proof}

\begin{proof}[Proof of Proposition $\ref{t1-3}$]
By the symmetry property $G(x,y)=G(y,x)$, we will assume that $|x|\le |y|$.

{\bf Case 1}:
$|x-y|\le |y|/2$ or $|y|\le 2$.
Under the condition $|x-y|\le |y|/2$ or $|y|\le 2$, we can see from the proof of Example \ref{ex1-1} (by noting that here $t_0(|x|)\asymp t_0(|y|)$
and $|x|\asymp |y|$) and obtain
\begin{equation}\label{t1-3-2}
\begin{split}
p(t,x,y)\asymp
\begin{cases}
t^{-d/2}e^{-\frac{|x-y|^2}{t}}e^{-tg(|y|)},\ \ & t\le t_0(|y|),\\
e^{-t}e^{-(1+|y|)\sqrt{g(|y|)}},\ \ \ & t>t_0(|y|).
\end{cases}
\end{split}
\end{equation}

{\bf Subcase 1.1}: $|x-y|^2\le t_0(|y|)/2$.
Now we set
\begin{align*}
G(x,y)&=\int_0^\infty p(t,x,y)\, dt=
\left(\int_0^{|x-y|^2}+\int_{|x-y|^2}^{t_0(|y|)}+\int_{t_0(|y|)}^\infty\right)p(t,x,y)\,dt\\
&=:I_1+I_2+I_3.
\end{align*}
According to \eqref{t1-3-2},
\begin{align*}
I_1&\asymp \int_0^{|x-y|^2} t^{-d/2}e^{-\frac{|x-y|^2}{t}}e^{-tg(|y|)}\,dt\\
&\asymp |x-y|^{-(d-2)}\int_1^\infty s^{{d}/{2}-2}\exp\left(-\left(s+\frac{g(|y|)|x-y|^2}{s}\right)\right)\,ds\\
&\asymp |x-y|^{-(d-2)}\exp\left(-|x-y|\sqrt{g(|y|)}\right).
\end{align*}
Here the second step follows from the change of variable $s=\frac{|x-y|^2}{t}$, and in the last step we have used
the fact that
\begin{align}\label{t1-3-3}
\int_1^\infty s^{{d}/{2}-2}e^{-\left(s+\frac{R^2}{s}\right)}\,ds\asymp e^{-R},\quad R>0.
\end{align}

Again by \eqref{t1-3-2},
\begin{align*}
I_2&\asymp \int_{|x-y|^2}^{t_0(|y|)}t^{-d/2}e^{-tg(|y|)}\,dt\asymp |x-y|^{-(d-2)}\int_{1}^{\frac{t_0(|y|)}{|x-y|^2}}s^{-d/2}\exp\left(-s |x-y|^2 g(|y|)\right)\,ds,
\end{align*}
where we have used the change of variable $s=\frac{t}{|x-y|^2}$ in the last step. Note that $\frac{t_0(|y|)}{|x-y|^2}\ge 2$ and $\frac{t_0(|y|)}{|x-y|^2}\ge \frac{1}{|x-y|^2g(|y|)}$. If $d>2$ or
$|x-y|^2g(|y|)>1$, then
\begin{align*}
 &\int_{1}^{\frac{t_0(|y|)}{|x-y|^2}}s^{-d/2}\exp\left(- s |x-y|^2 g(|y|)\right)\,ds \\
&\le \exp\left(-c_1|x-y|^2g(|y|)\right)\int_{1}^{\infty}s^{-d/2}\exp\left(- c_2 s |x-y|^2 g(|y|)\right)\,ds \\
&\le c_3\exp\left(-c_4|x-y|^2g(|y|)\right)
\end{align*} and
\begin{align*}
  \int_{1}^{\frac{t_0(|y|)}{|x-y|^2}}s^{-d/2}\exp\left(- s |x-y|^2 g(|y|)\right)\,ds
&\ge \int_{1}^{2}s^{-d/2}\exp\left(- s |x-y|^2 g(|y|)\right)\,ds\\
&\ge c_5\exp\left(-2|x-y|^2g(|y|)\right).
\end{align*}
If $d=2$ and $|x-y|^2g(|y|)\le 1$, then
\begin{align*}
&\int_{1}^{\frac{t_0(|y|)}{|x-y|^2}}s^{-1}\exp\left(- s |x-y|^2 g(|y|)\right)\,ds\\
&\le \int_1^{\max\left\{\frac{1}{|x-y|^2g(|y|)},2\right\}}s^{-1}e^{- s |x-y|^2 g(|y|)}\,ds+
\int_{\max\left\{\frac{1}{|x-y|^2g(|y|)},2\right\}}^\infty s^{-1}e^{- s |x-y|^2 g(|y|)}\,ds\\
&\le c_6\int_1^{\max\left\{\frac{1}{|x-y|^2g(|y|)},2\right\}}s^{-1}\,ds+\int_1^\infty s^{-1}e^{-s}\,ds\\
&\le c_7\max\left\{\log\left(\frac{1}{|x-y|^2g(|y|)}\right),1\right\}
\end{align*} and
\begin{align*}
 \int_{1}^{\frac{t_0(|y|)}{|x-y|^2}}s^{-1}\exp\left(- s |x-y|^2 g(|y|)\right)\,ds
&\ge \int_1^{\max\left\{\frac{1}{|x-y|^2g(|y|)},2\right\}}s^{-1}e^{- s |x-y|^2 g(|y|)}\,ds\\
&\ge c_8\max\left\{\log\left(\frac{1}{|x-y|^2g(|y|)}\right),1\right\}.
\end{align*}
Applying the same arguments as above, we can obtain that when $d=1$ and $|x-y|^2g(|y|)\le 1$,
\begin{align*}
\int_{1}^{\frac{t_0(|y|)}{|x-y|^2}}s^{-1/2}\exp\left(- s |x-y|^2 g(|y|)\right)\,ds\asymp
\frac{1}{|x-y|\sqrt{g(|y|)}}.
\end{align*}

Therefore, putting all the estimates above together yields that
\begin{align*}
I_2\asymp & |x-y|^{-(d-2)}\exp\left(-|x-y|^2g(|y|)\right)\cdot
\begin{cases}
1, & d\ge 3,\\
\max\left\{\log\left(\frac{1}{|x-y| \sqrt{g(|y|)}}\right),1\right\},\ &d=2,\\
\frac{1}{|x-y|\sqrt{g(|y|)}},\ &d=1
\end{cases}\\
= :&|x-y|^{-(d-2)} \Pi(x,y).
\end{align*}

Furthermore, using \eqref{t1-3-2}, we can obtain
\begin{align}\label{t1-3-4}
I_3\asymp \int_{t_0(|y|)}^\infty e^{-t}\exp\left(-(1+|y|)\sqrt{g(|y|}\right)\,dt
\asymp \exp\left(-(1+|y|)\sqrt{g(|y|}\right).
\end{align}

Combining with all the estimates for $I_1,I_2$ and $ I_3$, we can get that
\begin{align*}G(x,y)\asymp & |x-y|^{-(d-2)}\left[\exp\left(-|x-y|\sqrt{g(|y|)}\right)+\Pi(x,y)\right]+\exp\left(-(1+|y|)\sqrt{g(|y|}\right)\\
\asymp & |x-y|^{-(d-2)} \Gamma(x,y)+\exp\left(-(1+|y|)\sqrt{g(|y|}\right),\end{align*} where $\Gamma(x,y)$ is defined by \eqref{t1-3-1a}.

Note that, when $d \ge 2$ and $|x-y|\ge 1$,
\begin{align*}
|x-y|^{-(d-2)} \Gamma(x,y)&=g(|y|)^{\frac{d-2}{2}}\left(|x-y|\sqrt{g(|y|)}\right)^{-(d-2)}\Gamma(x,y)\\
&\ge c_9\exp\left(-c_{10}|x-y|\sqrt{g(|y|)}\right)\\
&\ge c_9\exp\left(-2c_{10}(1+|y|)\sqrt{g(|y|)}\right);
\end{align*} 
when $d\ge 2$ and $|x-y|\le 1$,
$$|x-y|^{-(d-2)} \Gamma(x,y) \ge\Gamma(x,y)\ge  c_9\exp\left(-c_{10}(1+|y|)\sqrt{g(|y|}\right);$$
when $d=1$,
$$|x-y|^{-(d-2)} \Gamma(x,y)=
(1+|y|)\cdot\frac{\exp(-|x-y|\sqrt{g(|y|)})}{(1+|y|)\sqrt{g(|y|)}}
\ge c_9\exp\left(-c_{10}(1+|y|)\sqrt{g(|y|}\right).$$
This along with the estimate yields the desired conclusion
\eqref{t1-3-1} for every $x,y\in \R^d$ with
$|x-y|^2\le t_0(|y|)/2$.

{\bf Subcase 1.2}: $|x-y|^2> t_0(|y|)/2$. In this case, we define
$$
G(x,y) =\int_0^{t_0(|y|)/2}p(t,x,y)\,dt+\int_{t_0(|y|)/2}^\infty p(t,x,y)\,dt=:J_1+J_2.
$$

By \eqref{t1-3-2}, we derive
\begin{align*}
J_1&\asymp \int_0^{t_0(|y|)/2}t^{-d/2}e^{-\frac{|x-y|^2}{t}}e^{-tg(|y|)}\,dt\\
&\asymp |x-y|^{-(d-2)}\int_{\frac{t_0(|y|)}{2|x-y|^2}}^\infty s^{{d}/{2}-2}\exp\left(-\left(s+\frac{g(|y|)|x-y|^2}{s}\right)\right)\,ds\\
&\asymp |x-y|^{-(d-2)}\exp\left(-|x-y|\sqrt{g(|y|)}\right),
\end{align*}
where in the second step we have used the change of variable $s=\frac{|x-y|^2}{t}$, and the last step is due to
\eqref{t1-3-3} and the fact that
\begin{align*}
\int_0^1 s^{{d}/{2}-2}\exp\left(-c_{11}\left(s+\frac{g(|y|)|x-y|^2}{s}\right)\right)\,ds
\le c_{12}\exp\left(-c_{13}|x-y|\sqrt{g(|y|)}\right).
\end{align*}

According to \eqref{t1-3-4}, we have
\begin{align*}
J_2\asymp \exp\left(-(1+|y|)\sqrt{g(|y|)}\right).
\end{align*}

Let $\Gamma(x,y)$ be defined by \eqref{t1-3-1a}. Note that when $|x-y|^2\ge t_0(|y|)/2$, $|x-y| \sqrt{g(|y|)}\ge c_{14}$, and so
$$\Gamma(x,y)\asymp \exp\left(-|x-y|\sqrt{g(|y|)}\right).
$$

Hence, according to all the estimates above and the arguments in the end of {\bf Subcase 1.1}, we obtain that for all $x,y\in \R^d$ with
$|x-y|^2> t_0(|y|)/2$,
\begin{align*}
G(x,y)&\asymp |x-y|^{-(d-2)}\exp\left(-|x-y|\sqrt{g(|y|)}\right)
\asymp |x-y|^{-(d-2)}\Gamma(x,y),
\end{align*}
where  
in the last step we used the facts that $|x-y| \sqrt{g(|y|)}\ge c_{15}$ due to $|x-y|^2\ge t_0(|y|)/2\ge c_{16}|x-y|/\sqrt{g(|y|)}$, and that,
for $d=1$,
\begin{align*}
&\frac{c_{17}}{\sqrt{g(|y|)}}\exp\left(-c_{18}|x-y|\sqrt{g(|y|)}\right)\\
&\le |x-y|\exp\left(-|x-y|\sqrt{g(|y|)}\right) 
=\frac{|x-y|\sqrt{g(|y|)}}{\sqrt{g(|y|)}}\exp\left(-|x-y|\sqrt{g(|y|)}\right)\\
&\le \frac{c_{19}}{\sqrt{g(|y|)}}\exp\left(-c_{20}|x-y|\sqrt{g(|y|)}\right),\end{align*} thanks to
$|x-y| \sqrt{g(|y|)}\ge c_{15}$ again.

{\bf Case 2}:
$|x-y|>|y|/2$ and $|y|>2$. Then we can follow the proof of Example \ref{ex1-1} and obtain
\begin{equation}\label{t1-3-5}
\begin{split}
p(t,x,y)\asymp
\begin{cases}
t^{-d/2}e^{-\frac{|x-y|^2}{t}}e^{-(1+|y|)\sqrt{g(|y|)}},\ \ & t\le \max\{t_0(|x|),t_0(|y|)\},\\
e^{-t}e^{-(1+|y|)\sqrt{g(|y|)}},\ \ \ & t>\max\{t_0(|x|),t_0(|y|)\}.
\end{cases}
\end{split}
\end{equation}

Set
$$
G(x,y) =\int_0^{\max\{t_0(|x|),t_0(|y|)\}}p(t,x,y)\,dt+\int_{\max\{t_0(|x|),t_0(|y|)\}}^\infty p(t,x,y)\,dt=:K_1+K_2.
$$

By \eqref{t1-3-5}, it holds that
\begin{align*}
K_1&\asymp \int_{0}^{\max\{t_0(|x|),t_0(|y|)\}}t^{-d/2}e^{-\frac{|x-y|^2}{t}}e^{-(1+|y|)\sqrt{g(|y|)}}\,dt\\
&\asymp |x-y|^{-(d-2)}e^{-(1+|y|)\sqrt{g(|y|)}}\int_{\frac{|x-y|^2}{\max\{t_0(|x|),t_0(|y|)\}}}^\infty
s^{{d}/{2}-2}e^{-s}\,ds\\
&\asymp \exp\left(-(1+|y|)\sqrt{g(|y|)}\right)\asymp |x-y|^{-(d-2)}\exp\left(-|x-y|\sqrt{g(|y|)}\right).
\end{align*}
where in the second step we have used the change of variable $s=\frac{|x-y|^2}{t}$, and the third step follows from
the fact that (by noting that $\frac{|x-y|^2}{\max\{t_0(|x|),t_0(|y|)\}}\asymp \frac{(1+|y|)^2}{\max\{t_0(|x|),t_0(|y|)\}}\ge c_{21}$)
\begin{align*}
c_{22}\exp\left(-c_{23}(1+|y|)\sqrt{g(|y|)}\right)\le \int_{\frac{|x-y|^2}{\max\{t_0(|x|),t_0(|y|)\}}}^\infty
s^{{d}/{2}-2}e^{-s}\,ds\le c_{24}
\end{align*}

Still by \eqref{t1-3-4} and \eqref{t1-3-5}, we obtain
\begin{align*}
K_2&\asymp e^{-(1+|y|)\sqrt{g(|y|)}}\int_{\max\{t_0(|x|),t_0(|y|)\}}^\infty e^{-s}\,ds\\
&\asymp |x-y|^{-(d-2)}\exp\left(-(1+|y|)\sqrt{g(|y|)}\right)\asymp |x-y|^{-(d-2)}\exp\left(-|x-y|\sqrt{g(|y|)}\right),
\end{align*} where we used the fact that $|x-y|\asymp |y|$.

Putting both estimates for $K_1$, $K_2$ together yields that
\begin{align*}
G(x,y)\asymp |x-y|^{-(d-2)}\exp\left(-|x-y|\sqrt{g(|y|)}\right)\asymp |x-y|^{-(d-2)}\Gamma(x,y),
\end{align*}
where the last step is due to the fact that
\begin{align*}
\Gamma(x,y)\asymp \exp\left(-|x-y|\sqrt{g(|y|)}\right),\quad  x,y\in \R^d\ {\rm with}\ |x-y|\asymp (1+|y|)\ge\frac{c_{25}}{\sqrt{g(|y|)}}.
\end{align*}

By now, we have finished the proof of the desired conclusion \eqref{t1-3-1}.
\end{proof}

\bigskip

\noindent{\bf Acknowledgement.} The research of Xin Chen is supported by
the National Natural Science Foundation of China (No.\ 12122111).
The research of Jian Wang is supported by the National Key R\&D Program of China (2022YFA1000033), and 
the National Natural Science Foundation of China (Nos. 11831014, 12071076 and 12225104).


\begin{thebibliography}{99}
\bibitem{A} Aronson, D.G.: Non-negative solutions of linear parabolic equations, \emph{Annali della Scuola Norm.
Sup. Pisa, {\rm22}} (1968), 607--694.


\bibitem{BJP}
B\'{e}n\'{e}dicte, A., Jacqueline, F.-P. and Peter, T.:
Ground-state positivity, negativity, and compactness
for a Schr\"odinger operator in $\R^N$, \emph{J. Funct. Anal.}, {\bf 245} (2007), 213--248.

\bibitem{BDS}
Bogdan, K., Dziuba\'{n}ski, J. and Szczypkowski, K.: Sharp Gaussian estimates for heat kernels of Schr\"odinger operators, \emph{Integral Equations Operator Theory}, {\bf 91} (2019), Paper no.\ 3, 20 pp.


\bibitem{CS} Carmona, R. and Simon, B.: Pointwise bounds on eigenfunctions and wave packets in $N$-body quantum systems,
V. Lower bounds and path integrals, \emph{Comm. Math. Phys.}, {\bf 80} (1981), 59--98.



\bibitem{CW1}   Chen, X. and Wang, J.: Two-sided heat kernel estimates for Schr\"odinger
operators with potentials decaying to zero at infinity, preprint.


\bibitem{CZ}  Chung, K.L. and  Zhao, Z.:
\emph{From Brownian Motion to Schr\"odinger's Equation},
Grundlehren der Mathematischen Wissenschaften, {\bf 312}, Springer-Verlag, Berlin, 1995.

\bibitem{Da}
Davies, E.B.: \emph{Heat Kernels and Spectral Theory}, Cambridge Tracts in Mathematics {\bf 92}, Cambridge
University Press, Cambridge, 1989.

\bibitem{DS84}
Davies, E.B. and Simon, B.: Ultracontractivity and heat kernels for Schr\"{o}dinger operators and
Dirichlet Laplacians, \emph{J. Funct. Anal.}, {\bf 59} (1984), 335--395.


\bibitem{Gr}
Grigor'yan, A.: \emph{Heat Kernel and Analysis on Manifolds}, American Mathematical Society, Providence, RI, 2009.





\bibitem{H} Hsu, E.P.: Brownian exit distribution of a ball, in: {\it Seminar on Stochastic Processes}, 108--116,
 1986.

\bibitem{IKO}
Ishige, K., Kabeya, Y. and Ouhabaz, E.M.: The heat kernel of a Schr\"{o}dinger operator
with inverse square potential, \emph{Proc. London Math. Soc.}, {\bf115} (2017), 381--410.




\bibitem{Ku} Kurata, K.:
An estimate on the heat kernel of magnetic Schr\"odinger operators and uniformly elliptic operators with non-negative potentials,
\emph{J. London Math. Soc.}, {\bf 62} (2000), 885--903.



\bibitem{MS}  Malecki, J. and Serafin, G.: Dirichlet heat kernel for the Laplacian in a ball,
\emph{Potential Anal.}, {\bf 52} (2020), 545--563.

\bibitem{MPR}  Metafune, G., Pallara, D. and Rhandi, A.:
Kernel estimates for Schr\"odinger operators, \emph{J. Evol. Equ.}, {\bf 6} (2006), 433--457.

\bibitem{MS1} Metafune, G. and Spina, C.:
Kernel estimates for a class of Schr\"odinger semigroups, \emph{J. Evol. Equ.}, {\bf 7} (2007), 719--742.


\bibitem{Mis} Milman, P.D. and Semenov, Y.A.: Global heat kernel bounds via desingularizing weights, \emph{J. Funct.
Anal.}, \textbf{212} (2004), 373--398.

\bibitem{Ou}
Ouhabaz, E.M.: \emph{Analysis of Heat Equations on Domains}, London Mathematical Society Monographs, {\bf 31},
Princeton University Press, Princeton, 2005.


\bibitem{Sh} Schroeder, C.: Green's functions for the Schr\"odinger operator with periodic potential,
\emph{J. Funct. Anal.}, {\bf 77} (1988), 60--87.

\bibitem{S} Sikora, A.: On-diagonal estimates on Schr\"odinger semigroup kernels and reduced heat kernels,
\emph{Comm. Math. Phys.}, {\bf 188} (1997), 233--249.

\bibitem{Sim}
Simon, B.: Schr\"odinger semigroup, \emph{Bull. Am. Math. Soc.}, {\bf 7} (1982), 447--526.

\bibitem{Z1} Zhang, Q.-S.:
Large time behavior of Schr\"odinger heat kernels and applications,
\emph{Comm. Math. Phys.}, {\bf 210} (2000), 371--398.


\bibitem{Z2} Zhang, Q.-S.:
Global bounds of Schr\"odinger heat kernels with
negative potentials, \emph{J. Funct. Anal.}, {\bf182} (2001), 344--370.

\bibitem{Z} Zhang, Q.-S.:
The boundary behavior of heat kernels of Dirichlet Laplacians, \emph{J. Diff. Equ.}, {\bf 182} (2002), 416--430.



\end{thebibliography}
\end{document}